\DeclareSymbolFont{cyrletters}{OT2}{wncyr}{m}{n}
\DeclareMathSymbol{\sha}{\mathalpha}{cyrletters}{"58}
 \newtheorem{thm}{Theorem}[section]
 \newtheorem{ques}[thm]{Question}
 \newtheorem{ex}[thm]{Example}
 \newtheorem{cor}[thm]{Corollary}
 \newtheorem{lem}[thm]{Lemma}
 \newtheorem{prop}[thm]{Proposition}
 \theoremstyle{definition}
 \newtheorem{defn}[thm]{Definition}
 \theoremstyle{remark}
 \theoremstyle{remark}
 \newtheorem{rem}[thm]{Remark}
 \numberwithin{equation}{subsection}
 \newcommand{\F}{\mathbb{F}}
 \newcommand{\tor}{\textup{tor}}
 \renewcommand{\u}{\textup{u}}
 \newcommand{\red}{\textup{red}}
 \renewcommand{\ss}{\textup{ss}}
 \newcommand{\ssu}{\textup{ssu}}
 \newcommand{\E}{\mathcal{E}}
 \newcommand{\im}{\textup{Im}}
 \renewcommand{\ker}{\textup{Ker}}
 \newcommand{\Pic}{\textup{Pic}}
 \newcommand{\Br}{\textup{Br}}
 \renewcommand{\H}{\textup{H}}
 \newcommand{\Gal}{\textup{Gal}}
 \newcommand{\Hom}{\textup{Hom}}
 \renewcommand{\dim}{\textup{dim}}
 \newcommand{\BM}{Brauer\textendash Manin\ }
 \newcommand{\BMo}{Brauer\textendash Manin obstruction\ }
  \newcommand{\A}{\mathbb{A}}
  \newcommand{\codim}{\textup{codim}}
 \newcommand{\Q}{\mathbb{Q}}
 \newcommand{\Z}{\mathbb{Z}}
 \newcommand{\G}{\mathbb{G}}
 \renewcommand{\O}{\mathcal{O}}
\begin{document}

\title[Arithmetic purity]
 {Arithmetic purity of strong approximation for homogeneous spaces}

\author{Yang Cao}
\author{Yongqi Liang}
\author{Fei Xu}

\keywords{strong approximation, purity, \BM obstruction, (linear) algebraic groups, Bruhat decomposition, homogeneous spaces}
\thanks{\textit{MSC 2010} : 14G05 11G35 14G25}
\date{\today.}
%\dedicatory{}

%%% ----------------------------------------------------------------------

%%% ----------------------------------------------------------------------

\begin{abstract}
We prove that any open subset $U$ of a semi-simple simply connected quasi-split linear algebraic group $G$ with $\codim (G\setminus U, G)\geq 2$ over a number field satisfies strong approximation by establishing a fibration of $G$ over a toric variety. We also prove a similar result of strong approximation with \BMo for a partial equivariant smooth compactification of a homogeneous space where all invertible functions are constant and the semi-simple part of the linear algebraic group is quasi-split. Some semi-abelian varieties of any given dimension where the complements of a rational point do not satisfy strong approximation with \BM obstruction are given.
\end{abstract}

\maketitle

\tableofcontents

%%% ----------------------------------------------------------------------

\section{Introduction}

It is well known that weak approximation over a number field is birationally invariant among smooth varieties by the implicit function theorem. Using Manin's idea, one can further study weak approximation with \BM obstruction. If a variety over a number field satisfies weak approximation with \BM obstruction which is conjectured to be true for rationally connected smooth varieties by Colliot-Th\'el\`ene (cf. \cite{CT03}), so are its open subsets with complement of codimension $\geq 2$ by the purity theorem for  \'etale cohomology.

Under a stronger topology -- the adelic topology -- instead of the product topology, one can study strong approximation which is related to integral points on a variety. Indeed,  Eichler  \citep{Eic38}-\citep{Eic52}, Weil \citep{Weil}, Shimura  \citep{Shi64}, Kneser  \citep{Kne65}, Platonov  \citep{Plat1}-\citep{Plat2}, Prasad  \citep{Pra77} and others established strong approximation for semi-simple simply connected linear algebraic groups. More recently, Browning and Schindler \cite{BS} established strong approximation for certain norm varieties by using analytic methods.
Min\v{c}hev in \citep{Minchev} pointed out that strong approximation cannot be true for varieties which are not simply connected. Therefore strong approximation is not birationally invariant among smooth varieties. However, one can expect that it is invariant among smooth varieties up to a closed sub-variety of codimension at least 2 as Zariski\textendash Nagata purity theorem. Indeed, Wei \cite[Lemma 1.1]{Wei14} and the first and the third author \cite[Proposition 3.6]{CaoXuToric} proved that this is true for the affine space ${\bf A}^{n}$ independently, which was applied to show strong approximation with Brauer\textendash Manin obstruction for toric varieties. Harpaz and Wittenberg gave another application of this result in \citep{HW}. Based on this evidence, Wittenberg proposed such an invariance problem in \cite[Problem 6]{AIM14} and his recent survey \citep[Question 2.11]{Wit16}. Colliot-Th\'el\`ene asked the same question in his talk in 2016 Indo-French conference in Chennai.  In this paper we give an affirmative answer for quasi-split semi-simple simply connected linear algebraic groups. In particular, we give a new proof of strong approximation for \emph{quasi-split} semi-simple simply connected groups by establishing fibrations over toric varieties.

 \begin{thm}[Theorem \ref{sss-quasi-split}]\label{maintheorem0}
Let $G$ be a semi-simple simply connected and quasi-split group over a number field $k$ and $S\neq \emptyset$ be a finite set of places of $k$. Then any open subset $U$ of $G$ with $\codim(G\setminus U, G)\geq 2$ satisfies strong approximation off $S$.
\end{thm}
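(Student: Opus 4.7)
The plan is to combine two building blocks via a fibration argument: \emph{arithmetic purity for $\mathbb{A}^n$} (Wei \cite{Wei14}, and independently Cao--Xu \cite{CaoXuToric}), which yields strong approximation on affine-space fibers, together with \emph{strong approximation with \BM obstruction for toric varieties} \cite{CaoXuToric}, which handles the toric base. The key geometric input is the Bruhat decomposition: since $G$ is quasi-split, fix a Borel subgroup $B = T V$ of $G$ with $T$ a maximal torus (necessarily an induced torus) and unipotent radical $V$, and let $V^{-}$ denote the opposite unipotent radical. Recall that the big cell $\Omega := V^{-} \cdot T \cdot V$ is open and dense in $G$ with a product decomposition $\Omega \cong V^{-} \times T \times V$, where $V$ and $V^{-}$ are each isomorphic as $k$-varieties to affine spaces.

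\textbf{Step 1 (Reduction to the big cell).} Classical strong approximation for semi-simple simply connected groups (Kneser, Platonov, Prasad) lets us translate any adelic point in $U(\mathbb{A}_{k}^{S})$ by a $k$-rational element of $G$ into the open big cell $\Omega$. Thus it suffices to prove strong approximation for $U \cap \Omega$ as an open subset of $\Omega$ with codimension-$\geq 2$ complement. In fact one may need a finite cover of $G$ by $G(k)$-translates of $\Omega$ and an adelic partition-of-unity argument.

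\textbf{Step 2 (Toric fibration).} Choose a smooth projective toric compactification $T \hookrightarrow X$; this is possible because $T$ is induced. The partial compactification $\widetilde\Omega := V^{-} \times X \times V$ then carries a smooth projection $\tilde p : \widetilde\Omega \to X$ extending the middle projection $\Omega \to T$, with every fiber isomorphic to the affine space $V^{-} \times V$. In other words, $\widetilde\Omega$ is a vector bundle over the smooth toric variety $X$, and $\Omega \subset \widetilde\Omega$ is the preimage of the open orbit $T \subset X$.

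\textbf{Step 3 (Fibration argument).} The complement $\widetilde\Omega \setminus (U \cap \Omega)$ has codimension $\geq 2$ in $\widetilde\Omega$, and for $x$ in a suitable dense open subset of $X$ its intersection with the fiber $\tilde p^{-1}(x)$ also has codimension $\geq 2$ in the affine space $\tilde p^{-1}(x)$. Apply the arithmetic purity theorem for $\mathbb{A}^n$ on such fibers, and combine with strong approximation with \BM obstruction for the toric variety $X$ on the base, to obtain strong approximation with \BM obstruction for $U \cap \Omega$, and hence, by Step 1, for $U$.

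\textbf{Step 4 (Triviality of the obstruction).} Since $G$ is semi-simple simply connected, $\Br(G)/\Br(k) = 0$; by purity for \'etale cohomology applied to the codimension-$\geq 2$ closed subset $G \setminus U$, the same vanishing holds for $\Br(U)/\Br(k)$. Therefore the \BM obstruction from Step 3 is vacuous and strong approximation holds in the usual sense, as claimed.

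The main obstacle will be Step 3: one must control the dense open subset of $X$ over which the fibers retain the codimension-$\geq 2$ property, and carefully carry out the fibration-patching argument that splices fibral purity for $\mathbb{A}^n$ with toric strong approximation on the base. A secondary technical point, in Step 2, is ensuring that the toric compactification $X$ can be chosen so that the bad locus of $\tilde p$ is suitably well-behaved for the adelic descent step.
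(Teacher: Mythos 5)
Your Step 3 contains a genuine gap that invalidates the argument as written. Having formed the abstract product $\widetilde\Omega := V^{-}\times X\times V$ with $X$ a smooth projective toric compactification of $T$, you claim that $\widetilde\Omega\setminus(U\cap\Omega)$ has codimension $\geq 2$ in $\widetilde\Omega$. This is false: $\widetilde\Omega\setminus\Omega = V^{-}\times(X\setminus T)\times V$ is a divisor, since the boundary $X\setminus T$ of any toric compactification of $T$ has pure codimension $1$ in $X$. Consequently the closed set you want to discard has codimension $1$, and neither the arithmetic purity theorem for $\mathbb{A}^n$ nor any fibration-patching argument over $X$ is applicable. The root cause is that $\widetilde\Omega$ is built as an external product and is not realized inside $G$, so the codimension-$\geq 2$ hypothesis on $G\setminus U$ has no way to propagate to $\widetilde\Omega$.

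The paper's proof avoids this by not compactifying $T$ projectively. Instead it takes the quasi-trivial torus $T = \mathrm{Res}_{K/k}\G_m$ and the \emph{standard toric variety} $R$, the minimal open toric subvariety of $\mathrm{Res}_{K/k}\G_a\cong\mathbb{A}^n$ containing $T$ with $\codim(\mathbb{A}^n\setminus R,\mathbb{A}^n)\geq 2$. The crucial geometric input you are missing is that the big-cell fibration $V_0 = Bw_0B\to T$ extends to a smooth surjective morphism $\phi\colon Y\to R$ with affine-space fibres, where $Y$ is an \emph{open subset of $G$ containing $V_0$}; this extension rests precisely on $\bar k[G]^\times = \bar k^\times$ and $\Pic(G_{\bar k})=0$ (valid since $G$ is semi-simple simply connected), which identify $\bar k[V_0]^\times/\bar k^\times$ with the divisors of $G\setminus V_0$ (Lemma~\ref{div}) and permit the $T$-equivariant extension via \cite[Proposition~2.2]{Cao-homog}. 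One then replaces $U$ by $U\cap Y$ (harmless: a variety satisfies strong approximation as soon as a dense open subset does), applies the easy fibration method \cite[Proposition~3.1]{CTXu13} to $\phi|_{U}\colon U\to R_1:=\phi(U)$, and uses arithmetic purity for $\mathbb{A}^n$ \emph{twice}: once for the base $R_1$, a codimension-$\geq 2$ open of $\mathbb{A}^n$, and once for the fibres over rational points of a suitable dense open of $T$, which are codimension-$\geq 2$ opens of affine space by Proposition~\ref{fiber-codim}. In particular neither strong approximation with Brauer--Manin obstruction for projective toric varieties nor the vanishing argument of your Step~4 is needed, and the translation and partition-of-unity machinery of your Step~1 is superfluous.
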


In \citep{CTXu09},  Colliot-Th\'el\`ene and the third author first studied strong approximation with \BM obstruction and established strong approximation with \BM obstruction for homogeneous spaces of semi-simple simply connected groups. Since then, Harari \citep{Harari08}, Demarche \citep{Demarche11} and Wei and the third author \cite{WX} proved strong approximation with \BM obstruction for various linear algebraic groups. Borovoi and Demarche \citep{BD13} showed strong approximation with Brauer\textendash Manin obstruction for homogeneous spaces with geometrically connected stabilizer. Colliot-Th\'el\`ene and the third author \citep{CTXu13},  Colliot-Th\'el\`ene and Harari \citep{CTH16}, the third author \citep{Xu15} and Derenthal and Wei \cite{DW}  established strong approximation for certain families of homogeneous spaces. The first and the third author \cite{CaoXuToric}, \citep{CaoXuGroupic} proved strong approximation with \BM obstruction for a partial equivariant smooth compactification of a connected linear algebraic group and the first author \citep{Cao-homog}  further established strong approximation with \BM obstruction for a partial equivariant smooth compactification of a homogeneous space by improving the descent method with combination of the fibration method.

On the other hand, many examples do not satisfy strong approximation with \BM obstruction. We mention the following one which is closely related to the  topic of the present paper.
In \cite[Example 5.2]{CaoXuToric}, the first and the third author consider the $\Q$-variety $X=\G_{m}\times\G_a\setminus \{(1,0)\}$. They show that for $k=\Q$ or an imaginary quadratic field, $X_{k}$ does not satisfy strong approximation with \BMo off $\infty_{k}$. However, for all other number fields $k$, $X_{k}$ does  satisfy strong approximation with \BMo off $\infty_{k}$.
Such an example shows that the property of strong approximation  with \BMo
\begin{itemize}
\item[-] does not hold in general even for $k$-rational varieties;
\item[-] may become valid over some finite extension of the ground field;
\item[-] is not invariant under the removal of a codimension $\geq2$ closed subset.
\end{itemize}

 Keeping such an example in mind, we may ask a similar question for strong approximation with Brauer\textendash Manin obstruction.
\begin{ques}\label{mainquestion} Let $X$ be a smooth geometrically integral variety over a number field $k$ and $S$ be a finite set of places of $k$.
Suppose that  $\Pic(X_{\bar{k}})$ is finitely generated and $\bar{k}[X]^\times =\bar{k}^{\times}$ where $\bar{k}$ is an algebraic closure of $k$. If $X$ satisfies strong approximation with \BMo off $S$,  does any open sub-variety of $X$ with complement of codimension $\geq 2$ satisfies the same property?
\end{ques}

We call such phenomena \textbf{arithmetic purity} of strong approximation (with \BM obstruction) off $S$. D. Wei gave an affirmative answer to this question for smooth toric varieties in \cite[Theorem 0.2]{Wei14}. In the present paper, we extend this result to partial smooth equivariant compactifications of homogeneous spaces of connected linear algebraic groups.

\begin{thm}[Theorem \ref{purity-com-hom}]\label{maintheorem}
Let $G$ be a connected linear algebraic group, $X$ be a smooth and geometrically integral variety with an action of $G$ over a number field $k$ and $S\neq \emptyset$ be a finite set of places of $k$.  Suppose that $X$ contains a rational point with a connected stabilizer and a Zariski-open dense orbit. If $\bar{k}[X]^\times=\bar{k}^\times$ and a simply connected covering $G^{\textup{sc}}$ of the semi-simple part of $G$ over $k$ satisfies the arithmetic purity of strong approximation off $S$, then $X$ satisfies the arithmetic purity of strong approximation with \BMo off $S$.
\end{thm}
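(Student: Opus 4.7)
The plan is to follow the strategy of \cite{Cao-homog}, which combines a universal-torsor descent with a fibration over a smooth toric variety, and upgrade each step to its arithmetic purity counterpart. Let $U\subseteq X$ be open with $\codim(X\setminus U, X)\geq 2$; the goal is to prove that $U$ satisfies strong approximation with \BM obstruction off $S$. The descent reduces this to plain strong approximation on a torsor $V$ above $U$; the fibration then reduces plain strong approximation on $V$ to fiberwise and basewise statements; finally the hypothesis on $G^{\textup{sc}}$ and Wei's arithmetic purity for smooth toric varieties \cite[Theorem 0.2]{Wei14} supply the fiberwise and basewise input.

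For the descent, since $X$ contains an open $G$-orbit $X^{0}\cong G/H$ and $\bar k[X]^\times=\bar k^\times$, one first checks that $\Pic(X_{\bar k})$ is finitely generated. Following \cite{CaoXuToric,Cao-homog}, one then constructs a $G$-equivariant universal torsor $\pi\colon Y\to X$ under a quasi-trivial torus of N\'eron--Severi type, satisfying $\bar k[Y]^\times=\bar k^\times$ and $\Pic(Y_{\bar k})=0$. Setting $V:=\pi^{-1}(U)$, smoothness of $\pi$ gives $\codim(Y\setminus V, Y)\geq 2$, and the descent formalism translates strong approximation with \BM obstruction on $U$ into plain strong approximation on $V$. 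Next, using the decomposition of $G$ through its semi-simple part and its torus quotient (together with the covering $G^{\textup{sc}}\to G^{\textup{ss}}$), one constructs a $G$-equivariant morphism $f\colon Y\to Z$ to a smooth toric variety $Z$ whose generic fiber, over a dense open subset of $Z$, is isomorphic to an open subvariety of $G^{\textup{sc}}$. The codimension hypothesis on $U$ ensures that, over a dense open of $Z$, the fiber $V_{z}\subseteq Y_{z}$ is an open subvariety whose complement has codimension $\geq 2$. Applying the assumed arithmetic purity of $G^{\textup{sc}}$ fiberwise and Wei's arithmetic purity on the toric base $Z$, the classical fibration method produces a global integral point on $V$, yielding the desired strong approximation and hence strong approximation with \BM obstruction for $U$.

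The principal obstacle will be the construction and control of the fibration $f\colon Y\to Z$: one must arrange that $f$ is simultaneously $G$-equivariant and compatible with the torsor $\pi$, with generic fibers of the required form, while handling the boundary divisors of the partial compactification $X$ outside the open orbit, which contribute to $\Pic(X_{\bar k})$ and can produce degenerate fibers of $f$. The hypothesis $\codim(X\setminus U, X)\geq 2$ must be exploited precisely, so that the locus where $f$ degenerates can be avoided by the adelic points being approximated. A secondary technical difficulty lies in tracking \BM compatibility through the descent $\pi$ when $U$ (rather than $X$) is the open subvariety under consideration; for this one invokes \'etale purity for codimension $\geq 2$ complements, giving an isomorphism between $\Br(X)$ and $\Br(U)$ on the algebraic parts relevant to the descent, and similarly for $\Br(Y)$ and $\Br(V)$.
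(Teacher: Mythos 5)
The proposal diverges substantially from the paper's proof, and it contains a genuine gap at its central step. The paper does not perform a universal-torsor descent under the N\'eron--Severi torus; instead it first fibers the open orbit $Z\cong G/H$ over the torus $T$ dual to $\bar k[Z]^\times/\bar k^\times$ (so that the fibers are $G_0$-homogeneous spaces with trivial geometric invertible functions), then reduces each fiber to the group level by choosing a twist $G^\sigma\to G^\sigma/H$ under $G$ itself via Skorobogatov's theorem (\cite[Corollary 5.11]{Cao-homog} and \cite[Theorem 5.2.1]{Skbook}), and finally runs a structure-theoretic chain $G\to G^{\textup{red}}$, then $G^{\textup{sc}}\times R(G)\to G^{\textup{red}}$ via Theorem \ref{iso}, then a projection $G^{\textup{sc}}\times T'\to T'$. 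Only at this last stage does the arithmetic purity of $G^{\textup{sc}}$ enter. There is no fibration over a toric variety in the proof of Theorem \ref{purity-com-hom}, and Wei's \cite[Theorem 0.2]{Wei14} is not invoked.

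The gap in your proposal is the claim that, after passing to a universal torsor $Y\to X$, one can build a $G$-equivariant fibration $f\colon Y\to Z$ over a smooth toric variety whose generic fiber is an open subvariety of $G^{\textup{sc}}$. In general this is false: the fibers of any $G$-equivariant morphism from (a torsor over) a $G/H$-variety to a toric base are themselves homogeneous spaces under the fiberwise kernel $G_0\subseteq G$, and they carry a nontrivial stabilizer inherited from $H$. Already for $G=SL_2\times\mathbb{G}_m$ and $H\cong\mathbb{G}_m$ embedded diagonally, the natural fibration over $\mathbb{G}_m$ has fiber $SL_2/\mathbb{G}_m$, not an open of $SL_2=G^{\textup{sc}}$. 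Passing to the universal torsor does not remove the stabilizer of the $G$-action; removing it is precisely the work done by the twisted torsor $G^\sigma\to G^\sigma/H$ in Theorem \ref{purity-hom}, and that step is absent from your outline. So the hypothesis on $G^{\textup{sc}}$ cannot be applied ``fiberwise'' to the fibers of $f$ as you describe. A secondary issue is that universal-torsor descent controls only $\Br_1$; to conclude ``plain strong approximation on $V$'' you would additionally need to kill the transcendental Brauer classes of $V$, which is not automatic and which the paper sidesteps by working with $\Br_G(X)$ through the direct group-torsor descent of Theorem \ref{iso}.
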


The purity assumption on $G^{\textup{sc}}$ holds when $G^{\textup{sc}}$ is quasi-split according to Theorem \ref{maintheorem0}.  

Furthermore, in \S \ref{examples} we produce some examples which do not verify arithmetic purity of strong approximation even for arbitrarily large codimension. These examples show that the geometric assumptions in Question \ref{mainquestion} are necessary. The proof is based on the combination of an extension of Poonen's argument \cite{Poonen} which was further explained and generalised by Colliot-Th\'el\`ene, P\'al and Skorobogatov in \cite{CTPSk} with an extension of Harari\textendash Voloch's argument \cite{HV10}. 

\begin{ex}[Theorem \ref{elliptic} and Corollary \ref{abelianvar}]
Let $E$ be a semi-abelian variety over a number field $k$, with $\dim(E)=1$ such that $E(k)$ is not discrete in $E(\A_k^{\infty_k})$. 

If $E$ is an elliptic curve over $k$, then the complement of any $k$-rational point in $E$ does not satisfy strong approximation with \BMo off $\infty_k$.

If $A$ is a non-trivial semi-abelian variety over $k$ such that $A(k)$ is discrete in $A(\A_k^{\infty_k})$, then the complement of any $k$-rational point in $A\times_k E$ does not satisfy strong approximation with \BMo off $\infty_k$.
\end{ex}

Our examples, viewed as fibrations over $A$, have the following two new features in contrast to Poonen's construction in \cite{Poonen}
\begin{itemize}
\item[-] the Brauer group of the total space does not come from the base variety;
\item[-] the \BMo does not control the failure of strong approximation on a ``bad'' fibre.
\end{itemize}
Compared to \cite[Example 5.2]{CaoXuToric} mentioned above, these examples provide some evidence that the property of strong approximation with \BMo may get lost over some finite field extensions.

It should be pointed out that Hassett and Tschinkel discussed the similar type of question about potential Zariski density of integral points in \cite[\S 3.3 and 5]{HT}.

\section{Notation and terminology}

%\subsection{Notation}
In the present paper, the base field $k$ will be a number field if not otherwise specified. We denote by $\Omega_{k}$ the set of places of $k$ and denote by $\infty_{k}$ the set of Archimedean places. For each place $v\in\Omega_{k}$,  the local field $k_{v}$ is the completion of $k$ with respect to the absolute value of $v$ and $k_\infty=\prod_{v\in \infty_k} k_v$. We denote by $\O_{v}$ the ring of integers of $k_v$ for $v\in\Omega_{k}\setminus \infty_{k}$. The ring of ad\`eles  is denoted by $\A_{k}$ and the ring of ad\`eles without $S$-components is denoted by $\A_{k}^{S}$ for a finite set $S$ of $\Omega_k$. Moreover, we denote by $pr^{S}:\A_{k}\to\A_{k}^{S}$  the natural projection and the induced projections on adelic points of varieties. For any finite subset $S$ of $\Omega_k$ containing $\infty_k$, the ring of $S$-integers of $k$ is denoted by $\O_S$. When $S=\infty_k$, we simply write $\O_k$ for $\O_S$. 

For any abelian group $A$, we write $A^D=\Hom (A, \Bbb Q/\Bbb Z)$. For any ring $R$, we write $R^\times$ for the set of invertible elements with respect to multiplication of $R$.

Let $G$ be a connected linear algebraic group over $k$. Denote by $G^{\textup{u}}$ the unipotent radical of $G$. Then $G^{\textup{red}}=G/G^{\textup{u}}$ is a reductive group. The semi-simple part $G^{\textup{ss}}$ of $G$ is the commutator subgroup of $G^{\textup{red}}$. The quotient $G^{\textup{tor}}=G^{\textup{red}}/G^{\textup{ss}}$ is an algebraic torus. Let $G^{\ssu}=\ker(G\to G^{\tor})$ which is an extension of $G^{\ss}$ by $G^\u$. Let $G^{\textup{sc}}$ be a simply connected covering of $G^{\ss}$ over $k$. Fix an algebraic closure $\bar k$ of $k$, the character group of a (not necessarily connected) linear algebraic group $H$ is denoted by $H^*=\Hom_{{\bar k}-\textup{gp}}(H_{\bar k}, \Bbb G_{m,\bar{k}})$. Then $H^*=\bar{k}[H]^\times/\bar{k}^\times$ by Rosenlicht Lemma.

Let $X$ be an algebraic variety (separated geometrically connected scheme of finite type) over $k$ with Brauer group $\Br(X)=\H^{2}_{\scriptsize{\textup{\'et}}}(X,\G_{m})$. The Brauer group has a natural filtration given by  $$\Br_{1}(X)=\ker[\Br(X)\to\Br(X_{\bar{k}})] \ \ \ \text{and} \ \ \ \Br_{0}(X)=\im[\Br(k)\to\Br(X)] . $$
The Brauer\textendash Manin set $X(\A_{k})^{B}$ consists of adelic points that are orthogonal to a subgroup $B\subset\Br(X)$ under the Brauer\textendash Manin pairing. It is a closed subset of $X(\A_{k})$ with the adelic topology and contains the diagonal image of the set of rational points $X(k)$ by class field theory. When $B=\Br(X)$, we simply write $X(\A_{k})^{\Br}$ for $X(\A_{k})^{\Br(X)}$. For a finite set $S$ containing $\infty_k$, an integral model $\mathcal{X}$ of $X$ is defined to be a separated scheme of finite type over $\O_S$ satisfying $\mathcal{X}\times_{\O_S} k=X$. For any extension $K$ of $k$, we write $X_{K}=X\otimes_{k}K$ and denote by $K[X]$ the ring of regular functions of $X_K$ over $K$. An algebraic variety $X$ with an action of a linear algebraic group $G$ is called a $G$-variety.

\begin{defn} \label{def-sa} Let $X$ be a variety over a number field $k$, $S$ be a finite subset of $\Omega_k$ and $B$ be a subgroup of $\Br(X)$.

We say that $X$ satisfies \emph{strong approximation off $S$} if the diagonal image of $X(k)$ is dense in $pr^S (X(\A_{k}))$.

We say that  $X$ satisfies \emph{strong approximation with respect to $B$ off $S$} if the diagonal image of $X(k)$ is dense in $pr^{S}(X(\A_{k})^{B})$. When $B=\Br(X)$, we say $X$ satisfies \emph{strong approximation with \BM obstruction off $S$}.

We say that $X$ satisfies \emph{Zariski open strong approximation with respect to $B$ off $S$} if the diagonal image of $U(k)$ is dense in $pr^{S}(X(\A_{k})^{B})$ for any Zariski open dense subset $U$ of $X$. When $B=\Br(X)$, we say that $X$ satisfies \emph{Zariski open strong approximation with \BMo off $S$}.
\end{defn}

Let $X$ be a smooth and geometrically integral variety over a number field $k$ and let $i: U\hookrightarrow X$ be a Zariski open dense subset over $k$. Fix a finite subset $S$ of $\Omega_k$, we consider the following three statements related to Definition \ref{def-sa}.

(A1) $X(k)$ is dense in $pr^S (X(\A_{k}))$. 

(A2) $U(k)$ is dense in $pr^S (X(\A_{k}))$.

(A3) $U(k)$ is dense in $pr^S (U(\A_{k}))$.

Then

$$ ({\rm A1}) \Longleftrightarrow ({\rm A2})  \Longleftarrow ({\rm A3}) $$ by \cite[Lemma 3.2]{PR}. It is obvious that $({\rm A2}) \not \Rightarrow (\rm{A3})$, for example $X=\Bbb P^1$ and $U=\Bbb G_m$.

Similarly, one also has the following three statements with \BM obstruction for a subgroup $B$ of $\Br(X)$.

(B1)  $X(k)$ is dense in $pr^S (X(\A_{k})^B)$. 

(B2) $U(k)$ is dense in $pr^S (X(\A_{k})^B)$.

(B3) $U(k)$ is dense in $pr^S (U(\A_{k})^B)$.

In general, one only has $$({\rm B2}) \Longrightarrow ({\rm B1}). $$ 

The example for $({\rm A2}) \not \Rightarrow (\rm{A3})$ also shows that $({\rm B2}) \not \Rightarrow (\rm{B3})$ when $B=\Br(X)$.

 The statement (B1) does not imply (B2). For example, Let $X$ be an elliptic curve over a number field $k$. Then $X$ satisfies strong approximation with \BMo off $\infty_k$ if $\sha_k (X)$ is finite. Suppose that $X(k)$ is finite. Let $U=X\setminus X(k)$. Then $U$ cannot satisfy (B2) when $S=\infty_k$ and $B=\Br(X)$.

The statement (B3) does not imply (B1) either. For example, let $E$ be an elliptic curve over a number field $k$ such that both $E(k)$ and $\sha_k(E)$ are finite. Suppose that $E(k)$ contains more than one element. Fix $e\in E(k)$ and $p\in \Bbb G_a(k)$. Let 
$$X=(E\times_k \Bbb G_a) \setminus \{(e,p)\}  \ \ \ \text{and} \ \ \ U=(E\setminus \{e\})\times_k \Bbb G_a .$$ Then $U$ is an open dense subset of $X$. Since $\Br(E\setminus \{e\})=\Br(E)$ by \cite[III, ex.2.22 a)]{Milne80} and \cite[Theorem 6.4.4]{GS06}, one has 
$$\Br(U)=\Br(E\setminus \{e\})=\Br(E) =\Br(E\times_k \Bbb G_a)= \Br(X) $$ by \cite[Lemma 2.1]{CDX} and the cohomological purity. Then $U$ satisfies strong approximation with respect to $\Br(X)$ off $\infty_k$ by \cite[Proposition 3.1 and 3.2]{LiuXu15}. On the other hand, consider the fibration $\pi: X \rightarrow E$ obtained by the projection. Since $\pi^{-1}(e)\cong \Bbb G_m$ does not satisfy strong approximation off $\infty_k$, one concludes that $X$ does not satisfy strong approximation with \BMo off $\infty_k$ by \cite[Lemma 5.1]{CaoXuToric}.  

As a consequence, $({\rm B3}) \not \Rightarrow (\rm{B2})$.

However, when $B$ is finite, one has 
$$ ({\rm B1}) \Longleftrightarrow ({\rm B2})  \Longleftarrow ({\rm B3}) , $$ where the first equivalence is given by the following proposition and the second implication follows from Harari's formal lemma (see \citep[Proposition 2.6]{CTXu13}).

\begin{prop} \label{ndiff} Let $X$ be a smooth and geometrically integral variety over a number field $k$, $B$ be a finite subgroup of $\Br(X)$ and $S$ be a finite set of $\Omega_k$. If $X$ satisfies strong approximation with respect to $B$ off $S$, then $X$ satisfies Zariski open strong approximation with respect to $B$ off $S$.
\end{prop}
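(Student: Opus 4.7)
The plan is to reduce Zariski open strong approximation to ordinary strong approximation with respect to $B$ by a local perturbation argument. Fix a Zariski open dense $U \subset X$ with closed complement $Z = X \setminus U$. Starting from a target $(\tilde{x}_v)_{v\notin S} \in pr^S(X(\A_k)^B)$ and a basic open neighborhood $W$ of it, the task will be to produce a point of $U(k)$ approximating $(\tilde{x}_v)_{v\notin S}$ inside $W$.

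First I would lift to a global adelic point $(x_v)_v \in X(\A_k)^B$. Then I would perturb $(x_v)_v$ at finitely many places so as to land in $U(\A_k)$ while staying inside an open set mapping into $W$. Two ingredients make this possible: since $X$ is smooth and $Z \subsetneq X$ has positive codimension, $U(k_v)$ is dense in $X(k_v)$ for every place $v$ (by the implicit function theorem at archimedean places, and by the fact that a proper closed subvariety of a smooth variety has empty $v$-adic interior at non-archimedean places); and since $B$ is finite and each $\Br(k_v)$ is a discrete abelian group, every evaluation map $\mathrm{ev}_\alpha : X(k_v) \to \Br(k_v)$ for $\alpha \in B$ is locally constant, so a small-enough perturbation preserves all the local invariants. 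The resulting adelic point $(y_v)_v$ then lies in $U(\A_k) \cap X(\A_k)^B$ and still projects into the desired neighborhood off $S$.

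Next I would apply the hypothesis to $(y_v)_v$, but relative to a neighborhood shrunk so that at each relevant place $v \notin S$ it is contained in $U(k_v)$ (open in $X(k_v)$ since $U$ is open in $X$). This produces $y \in X(k)$ approximating $(y_v)_{v\notin S}$ inside the shrunken neighborhood. The final observation is that whether a $k$-rational point of $X$ belongs to the open subscheme $U$ is an intrinsic scheme-theoretic property, so $y \in U(k_v)$ at a single place already forces $y \in U(k)$. Hence $y$ is the desired point of $U(k)$ in $W$.

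The main technical content is the two density statements invoked in the perturbation step; both are classical, but the density of $U(k_v)$ in $X(k_v)$ for non-archimedean $v$ uses smoothness of $X$ in an essential way. The rest is careful bookkeeping with adelic neighborhoods and integral models.
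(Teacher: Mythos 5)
Your argument follows the same route as the paper's proof: approximate the adelic point by a rational one while forcing membership in $U$ at a single auxiliary place $v_0\notin S$, using that $U(k_{v_0})$ is dense in $X(k_{v_0})$ (by smoothness of $X$) and that $B$ is finite to preserve the Brauer--Manin condition. The paper makes the second point automatic by taking $v_0$ outside the finite set $P_0$ for which $\mathcal{X}(\O_v)\perp B$ holds at all $v\notin P_0$, so every point of $\mathcal{X}(\O_{v_0})$ has zero evaluation and no local-constancy argument is needed, whereas you invoke local constancy of the Brauer evaluations; both variants are valid and both use finiteness of $B$. One inaccuracy to flag: a perturbation at finitely many places generally cannot produce a point of $U(\A_k)$---for instance, if $Z$ contains a $k$-rational point then no finite perturbation of its diagonal image avoids $Z$ at cofinitely many places---but this overstatement is harmless here because, as you correctly observe at the end, membership in $U(k_{v_0})$ at a single place already forces $y\in U(k)$.
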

\begin{proof}
For a Zariski open dense subset  $U$ of $X$, there are a finite subset $P_0$ of $\Omega_k$ containing $S\cup \infty_k$ and an integral model $\mathcal{X}$ of $X$ over $\O_{P_0}$ such that $\mathcal{X}(\O_v) \perp B$ for all $v\not \in P_0$. For any open subset
$$W= \prod_{v\in P}W_{v}\times\prod_{v\notin P}\mathcal{X}(\O_{v})\subset X(\A_{k})$$ with $P\supset P_{0}$ and $W_v= X(k_v) $ for $v\in S$ such that $W\cap X(\A_k)^{B}\neq \emptyset$, one can choose $v_0\not \in P$ and define
$$ W_1= \prod_{v\in P}W_{v}\times (U(k_{v_0})\cap \mathcal{X}(\O_{v_0}))\times \prod_{v\notin (P\cup \{v_0\})}\mathcal{X}(\O_{v}) \subset W .$$ Since $X$ is smooth over $k$, then $U(k_{v_0})$ is dense in $X(k_{v_0})$ by \cite [Lemma 3.2]{PR}. This implies that $$U(k_{v_0})\cap \mathcal{X}(\O_{v_0})\neq \emptyset \ \ \ \text{and} \ \ \  W_1\cap X(\A_k)^{B}\neq \emptyset . $$ Therefore
$ U(k) \cap W_1 = X(k) \cap W_1  \neq \emptyset $ as required.
\end{proof}

 In Section \ref{Zosa}, we will give a complete description of Zariski open strong approximation with Brauer\textendash Manin obstruction for connected linear algebraic groups.

Under the assumption of Question \ref{mainquestion} with $[B:\Br_1(X)] < \infty$, we also have
$$ ({\rm B1}) \Longleftrightarrow ({\rm B2})  \Longleftarrow ({\rm B3}) $$ by the following result.

\begin{prop} \label{ext}  Let $X$ be a smooth and geometrically integral variety over a number field $k$ with $\bar{k}[X]^\times=\bar{k}^\times$ and $S$ be a finite subset of $\Omega_k$. 
Suppose that $\Pic(X_{\bar{k}})$ is finitely generated and $$\Br_{1}(X)\subseteq B\subseteq \Br(X) \ \ \ \text{such that} \ \ \  [B:\Br_1(X)] < \infty .$$

(1) If a Zariski open dense subset $U$ of $X$ satisfies strong approximation with respect to $B$ off $S$, then $X$ satisfies strong approximation with respect to $B$ off $S$.

(2) If $X$ satisfies strong approximation with respect to $B$ off $S$, then $X$ satisfies Zariski open strong approximation with respect to $B$ off $S$.

\end{prop}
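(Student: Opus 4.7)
My strategy hinges on first showing that $B/\Br_0(X)$ is finite. The Hochschild--Serre spectral sequence, together with Rosenlicht's lemma and the hypothesis $\bar{k}[X]^\times=\bar{k}^\times$, gives an injection $\Br_1(X)/\Br_0(X)\hookrightarrow\H^{1}(k,\Pic(X_{\bar{k}}))$; the target is finite because $\Pic(X_{\bar{k}})$ is finitely generated, and combined with $[B:\Br_1(X)]<\infty$ this forces $|B/\Br_0(X)|<\infty$. Since any class in $\Br_0(X)$ pairs trivially with every adelic point by global reciprocity, the Brauer--Manin condition for $B$ is detected by finitely many representatives $b_1,\dots,b_n$, and one can then find a finite set $P_0\supset S\cup\infty_k$ together with integral models $\mathcal{U}\subset\mathcal{X}$ of $U\subset X$ over $\O_{P_0}$ such that each $b_i$ extends to $\mathcal{X}$ and $\mathcal{X}(\O_v)\perp B$ for every $v\notin P_0$.

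For part~(1), I would start from $(x_v)\in X(\A_k)^B$ and a basic open neighbourhood $W=\prod_{v\in P\setminus S}W_v\times\prod_{v\notin P,\,v\notin S}\mathcal{X}(\O_v)$ of $pr^{S}((x_v))$, enlarging $P\supset P_0$ so that $x_v\in\mathcal{X}(\O_v)$ at every $v\notin P$ and $\mathcal{U}(\O_v)\neq\emptyset$ at every $v\notin P$ (the latter by Lang--Weil applied to the smooth geometrically integral model $\mathcal{U}$). I would then construct $(y_v)\in U(\A_k)^B$ by picking $y_v\in W_v\cap U(k_v)$ close enough to $x_v$ at each $v\in P\setminus S$ that the local invariant of every $b_i$ is unchanged (possible by density of $U(k_v)$ in $X(k_v)$ from \cite[Lem.~3.2]{PR} together with local constancy of the Brauer evaluation), picking any $y_v\in\mathcal{U}(\O_v)$ at $v\notin P$ outside $S$, and any $y_v\in U(k_v)$ at $v\in S$. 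Orthogonality to $B$ is preserved since the $v\in P$ contributions match those of $(x_v)$ while the $v\notin P$ contributions vanish for both. Applying strong approximation of $U$ with respect to $B$ off $S$ to the neighbourhood $\prod_{v\in P\setminus S}(W_v\cap U(k_v))\times\prod_{v\notin P,\,v\notin S}\mathcal{U}(\O_v)$ of $pr^{S}((y_v))$ then produces $u\in U(k)\subset X(k)$ with $pr^{S}(u)\in W$.

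Part~(2) follows the proof of Proposition~\ref{ndiff} almost verbatim, with the finiteness of $B$ there replaced by the finiteness of $B/\Br_0(X)$ established above: from a basic open neighbourhood of $pr^{S}((y_v))$ in $pr^{S}(U(\A_k))$ for $(y_v)\in U(\A_k)^{B}$, I refine at a single auxiliary place $v_0\notin P\cup P_0$ by imposing the open condition $U(k_{v_0})\cap\mathcal{X}(\O_{v_0})$; density of $U(k_{v_0})$ in $X(k_{v_0})$ combined with $\mathcal{X}(\O_{v_0})\perp B$ produces an adelic perturbation of $(y_v)$, viewed in $X(\A_k)^B$, inside the refined neighbourhood. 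Strong approximation of $X$ with respect to $B$ off $S$ then delivers $x\in X(k)$ there, necessarily lying in $U(k)$ via the injection $X(k)\hookrightarrow X(k_{v_0})$. The main obstacle throughout is the Brauer--Manin bookkeeping for the possibly infinite group $B$, which hinges entirely on the reduction to the finite quotient $B/\Br_0(X)$ and thereby on the geometric hypotheses on $X$.
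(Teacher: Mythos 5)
Your approach is genuinely different from the paper's. The paper lifts $(x_v)\in X(\A_k)^{\Br_1(X)}$ to an adelic point of a universal torsor $Y\to X$ via Skorobogatov's descent theorem [Sk99, Thm.~3], perturbs inside the torsor, and pushes down; orthogonality to $\Br_1(X)$ is then automatic, and only a finite set $\Lambda$ of representatives of $B/\Br_1(X)$ has to be checked by hand. You instead observe directly that the exact sequence coming from Hochschild--Serre (using $\bar{k}[X]^\times=\bar{k}^\times$) identifies $\Br_1(X)/\Br_0(X)$ with a subgroup of $\H^1(k,\Pic(X_{\bar{k}}))$, which is finite because the continuous Galois action on a finitely generated group factors through a finite quotient; combined with $[B:\Br_1(X)]<\infty$ this gives $|B/\Br_0(X)|<\infty$, so the whole Brauer--Manin condition for $B$ reduces to finitely many classes and you can run a bare-hands perturbation argument parallel to Proposition~\ref{ndiff}. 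This is a legitimate and somewhat more elementary route that avoids torsors entirely and makes parts (1) and (2) mirror each other; what it gives up is the structural content of the torsor formalism, which in the paper handles all of $\Br_1(X)$ in one stroke and would continue to function even in settings where $\Br_1(X)/\Br_0(X)$ fails to be finite.

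There is, however, a genuine (if small) gap in your proof of part (1). At the places $v\in S$ you allow $y_v$ to be \emph{arbitrary} in $U(k_v)$, and then assert that orthogonality to $B$ is preserved because ``the $v\in P$ contributions match those of $(x_v)$.'' Since $S\subset P$, this is false as written: an arbitrary choice of $y_v$ at $v\in S$ can change $\mathrm{inv}_v\,b_i(y_v)$, and then $\sum_v\mathrm{inv}_v\,b_i(y_v)$ need no longer vanish, so $(y_v)$ need not lie in $U(\A_k)^B$ and strong approximation for $U$ cannot be invoked. The fix is exactly the device you already use at $v\in P\setminus S$: at each $v\in S$, choose $y_v\in U(k_v)$ sufficiently close to $x_v$ that $\mathrm{inv}_v\,b_i(y_v)=\mathrm{inv}_v\,b_i(x_v)$ for all $i$; this is possible by density of $U(k_v)$ in $X(k_v)$ (smoothness, \cite[Lemma~3.2]{PR}) and local constancy of the Brauer evaluation map. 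Two further, more cosmetic, imprecisions: ``$\mathcal{X}(\O_v)\perp B$'' cannot be taken literally when $B$ is infinite --- the precise assertion is $\mathcal{X}(\O_v)\perp b_i$ for the chosen representatives, which together with global reciprocity on $\Br_0(X)$ yields the needed control --- and in part (2) the starting datum should be $(y_v)\in X(\A_k)^B$ with a neighbourhood in $pr^{S}(X(\A_k))$, not in $pr^{S}(U(\A_k))$, since Zariski open strong approximation asks $U(k)$ to be dense in $pr^S(X(\A_k)^B)$. With these corrections your argument is complete.
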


\begin{proof}
One can assume that $X(\A_{k})^{B}\neq\emptyset$. Let $(x_{v})\in X(\A_{k})^{B}$. There is a universal torsor $Y\xrightarrow{f} X$ with $(y_v)\in Y(\A_{k})$ such that $$f((y_v))=(x_v)$$ by \cite[Theorem 3]{Sk-beyond}.

Let $U$ be a Zariski open dense subset of $X$ and  $$V=U\times_{X} Y \ \ \ \text{ and } \ \ \ \Lambda\subset \Br(X)$$ be a finite set of representatives of $B/\Br_{1}(X)$. Let $P_0$ be a sufficiently large finite set containing $S$ and $\infty_k$ such that
\begin{itemize}
\item[(a)] The morphism $Y\xrightarrow{f} X$ extends to a morphism $\mathcal{Y}\xrightarrow{f} \mathcal{X}$ of integral models over $\O_{P_0}$ and $y_v\in \mathcal{Y}(\O_v)$ for $v\not\in P_0$.
\item[(b)] Both open immersions $U \rightarrow X$ and $V\rightarrow Y$ extend to open immersions $\mathcal{U} \rightarrow \mathcal{X}$ and $\mathcal{V} \rightarrow \mathcal{Y}$ of integral models over $\O_{P_0}$ respectively.
\item[(c)] $\mathcal{V}=\mathcal{U}\times_{\mathcal{X}} \mathcal{Y}$ over $\O_{P_0}$ and the projection $\mathcal{V}=\mathcal{U}\times_{\mathcal{X}} \mathcal{Y} \rightarrow \mathcal{U}$  is an extension of $V\xrightarrow{f|_{V}} U$.
\item[(d)] $\mathcal{X}(\O_v) \perp \Lambda$ and $\mathcal{V}(\O_v)\neq \emptyset$ by the Lang\textendash Weil estimates for all $v\not \in P_0$.
\end{itemize}

Let $$W= \prod_{v\in P}W_{v}\times\prod_{v\notin P}\mathcal{X}(\O_{v})\subset X(\A_{k})$$ be an open subset such that $W^B\neq \emptyset$, where $P\supset P_{0}$ and $W_v= X(k_v) $ for $v\in S$. Take $(x_{v})\in W^B$.  For all $v\in P$, 
we choose $y_v'\in V(k_v)\cap f^{-1}(W_v)$ sufficiently close to $y_v$ by \cite [Lemma 3.2]{PR} such that
$$ b(f(y_v'))=b(f(y_v))=b(x_v) $$ for all $b\in \Lambda$.

For $v\not\in P$, we choose $y_v'\in \mathcal{V}(\O_v)$ by (d). Then $(y_v')\in V(\A_k) \subseteq Y(\A_k)$ with $$f((y_v'))\in W\cap U(\A_k)^{\Br_1(X)}$$ by \cite[Theorem 3]{Sk-beyond}. For $v\not\in P$, both $f(y_v')$ and $x_v$ are in $\mathcal{X}(\O_v)$ by (a), (b) and (c). This implies that
$b(f(y_v'))=b(x_v)$ are trivial for all $b\in \Lambda$ and $v\not\in P$ by (d). Therefore
$f((y_v'))\in W\cap U(\A_k)^{B} $. 

For case (1), we have
 $$(X(k)\cap W)\supseteq (U(k)\cap W) \neq \emptyset $$ as desired by the assumption on $U$.
 
 For case (2), we choose $v_0\not\in P$ and define 
 $$W_0 = (\prod_{v\in P}W_{v})\times f(\mathcal{V}(\O_{v_0})) \times (\prod_{v\notin P\cup \{v_0\} }\mathcal{X}(\O_{v})) \subset  W.$$ Then $W_0$ is an open subset of $X(\A_k)$ since $f$ is smooth. Therefore $W_0^B\neq \emptyset$ by the above argument. Then 
 $$ X(k)\cap W_0 =U(k) \cap W_0 \neq \emptyset $$ as desired by the assumption on $X$. 
\end{proof}

\begin{defn} Let $X$ be a smooth variety over a number field $k$, $B$ be a subgroup of $\Br(X)$, $S$ be a finite subset of $\Omega_k$ and $c$ be an integer bigger than 1.

We say that $X$ satisfies \emph{the arithmetic purity off $S$ of codimension $c$} if any open subvariety with complement of codimension $\geq c$ satisfies strong approximation off $S$. In particular, when $c=2$, we simply say that $X$ satisfies \emph{the arithmetic purity off $S$}.

We say that $X$ satisfies \emph{the arithmetic purity with respect to $B$ off $S$ of codimension $c$} if any open subvariety with complement of codimension $\geq c$ satisfies strong approximation with respect to $B$ off $S$. In particular, when $c=2$, we simply say that $X$ satisfies \emph{the arithmetic purity with respect to $B$ off $S$}.

% We say $X$ satisfies \emph{the potential arithmetic purity with respect to $\Br_1(X)$ (or $\Br(X)$) off $S$ of codimension $c$} if there is a finite extension $K/k$ such that $X_L$ satisfies the arithmetic purity with respect to $\Br_1(X_L)$ (or $\Br(X_L)$) off $S_L$ of codimension $c$ for any finite extension $L/K$, where $S_L$ is the places of $L$ lying above $S$. In particular, when $c=2$, we simply say  $X$ satisfies \emph{the potential arithmetic purity with respect to $\Br_1(X)$ (or $\Br(X)$) off $S$}.

\end{defn}

It is clear that the similar arithmetic purity for weak approximation with \BMo holds for smooth varieties by the cohomological purity theorem.

\begin{rem} In this paper we restrict ourselves to varieties over number fields. If $X$ satisfies strong approximation over a number field $k$ and $X(\A_k)\neq \emptyset$, then $X_{\bar{k}}$ is simply connected  by \cite[Theorem 1]{Minchev} (see also \cite[Proposition 2.2]{Rapinchuk}). The analogue is not true over a global function field. For example,  $\Bbb G_a$ satisfies strong approximation off a finite non-empty set of primes over a global function field  but $\Bbb G_a$ is not simply connected over an algebraic closure of this field. This is because $\Bbb G_a$ admits Artin\textendash Schreier \'etale coverings over a field of positive characteristic.
\end{rem}

\section{Arithmetic purity for semi-simple simply connected quasi-split linear algebraic groups} \label{qs-tv}
In this section we study arithmetic purity for semi-simple simply connected quasi-split linear algebraic groups by establishing fibrations over toric varieties.

Let $G$ be a quasi-split semi-simple simply connected linear algebraic group and $B$ be a Borel subgroup of $G$ over a field $k$ of characteristic 0. By the Bruhat decomposition (see \citep[(8.3.11) Corollary, (8.2.4) Proposition (ii) and (8.3.2) Lemma (ii)]{Sp}), there is a root $w_0\in G(k)$ (which is the longest one) such that the big cell $V_0=Bw_0B$ is a Zariski open dense subset of $G$ over $k$ by Galois descent.
If $B^{\u}$ is the unipotent radical of $B$ and $T$ is a maximal torus of $B$ over $k$, one has the split short exact sequence
$$ 1\rightarrow B^{\u} \rightarrow B \xrightarrow{p} T \rightarrow 1$$ of algebraic groups over $k$ where the map $p$ is $T$-equivariant. Moreover, one has an isomorphism of varieties over $k$
 \begin{equation} \label{isom}
B\times_k B^{\u} \xrightarrow{\cong} V_0 ; \ \ \ (b,u) \mapsto b\cdot w_0 \cdot u
\end{equation}   by \cite[(8.3.5) Lemma (ii) and (8.3.6) Lemma (ii)]{Sp}.  Combining (\ref{isom}) with the map $p$, one obtains a $T$-equivariant morphism over $k$
\begin{equation} \label{tor}
 \phi: \ V_0 \rightarrow T \ \ \ \ \ \text{with} \ \ \ \ \ \bar{k}[T]^\times/\bar{k}^\times \xrightarrow{\cong} \bar{k}[V_0]^\times/\bar{k}^\times
\end{equation}
 as $\Gal(\bar{k}/k)$-modules by (\ref{isom}).

Recall that a torus $T$ over $k$ is called quasi-trivial if $\bar{k}[T]^\times/\bar k^\times$ is a permutation $\Gal(\bar{k}/k)$-module, i.e.  $T={\rm{Res}}_{K/k}(\Bbb G_m)$ for some finite \'etale $k$-algebra $K$.

\begin{lem} \label{div} Suppose $G$ is a quasi-split semi-simple simply connected linear algebraic group over a field $k$ of characteristic 0 and $B$ is a Borel subgroup of $G$ over $k$. If $V_0=Bw_0B$ is the big cell of $G$ over $k$ as above, then the map
$$  \bar{k}[V_0]^\times/\bar{k}^\times \xrightarrow{\cong} {\rm{Div}}_{G_{\bar k} \setminus (V_0)_{\bar k}}(G_{\bar{k}}) ; \ \ \ f \mapsto div(f) $$
is a canonical isomorphism of $\Gal(\bar{k}/k)$-modules. Moreover, any maximal torus of $B$ over $k$ is quasi-trivial. \end{lem}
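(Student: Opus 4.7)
The plan is to apply the standard excision/divisor class exact sequence to the pair $(G_{\bar k}, V_{0,\bar k})$ and to exploit two classical facts about a semi-simple simply connected group: that all invertible regular functions are constant and that the geometric Picard group vanishes. I will then identify the irreducible components of $G_{\bar k}\setminus V_{0,\bar k}$ as Schubert divisors indexed by simple roots in order to read off the permutation structure needed for the second assertion.

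First, I would write down the exact sequence of $\Gal(\bar k/k)$-modules
\[
0\longrightarrow \bar{k}[G]^\times/\bar{k}^\times\longrightarrow \bar{k}[V_0]^\times/\bar{k}^\times\xrightarrow{\;\textup{div}\;}{\rm Div}_{G_{\bar k}\setminus (V_0)_{\bar k}}(G_{\bar k})\longrightarrow \Pic(G_{\bar k})\longrightarrow \Pic((V_0)_{\bar k})\longrightarrow 0,
\]
which holds since $G_{\bar k}$ is smooth and $(V_0)_{\bar k}$ is the complement of a Weil divisor. Because $G$ is semi-simple simply connected, its character group $G^{*}$ is trivial, so by Rosenlicht's lemma $\bar{k}[G]^\times=\bar{k}^\times$; and by a classical theorem (Fossum--Iversen, Popov) a semi-simple simply connected group over an algebraically closed field of characteristic zero has trivial Picard group. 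Feeding $\bar{k}[G]^\times/\bar{k}^\times=0$ and $\Pic(G_{\bar k})=0$ into the sequence yields at once the claimed isomorphism $f\mapsto\textup{div}(f)$, which is $\Gal(\bar k/k)$-equivariant by functoriality of the divisor map.

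For the second assertion, let $T$ be a maximal torus of $B$ over $k$. The $T$-equivariant morphism $\phi\colon V_0\to T$ in \eqref{tor} gives an isomorphism $\bar k[T]^\times/\bar k^\times\xrightarrow{\cong}\bar k[V_0]^\times/\bar k^\times$ of Galois modules, which combined with the first part produces a Galois-equivariant isomorphism $T^{*}\cong{\rm Div}_{G_{\bar k}\setminus (V_0)_{\bar k}}(G_{\bar k})$. The latter is free abelian on the irreducible components of $G_{\bar k}\setminus (V_0)_{\bar k}$; using the Bruhat decomposition
\[
G_{\bar k}=\bigsqcup_{w\in W}B_{\bar k}wB_{\bar k},\qquad \textup{codim}(B_{\bar k}wB_{\bar k})=\ell(w_0)-\ell(w),
\]
I would identify these components as the Schubert divisors $\overline{B_{\bar k}w_0s_\alpha B_{\bar k}}$ indexed by the simple reflections $s_\alpha$, equivalently by the simple roots $\alpha$ of the based root datum attached to $(G,B,T)$.

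It then remains to observe that the $\Gal(\bar k/k)$-action on this set of divisors is just the $*$-action on the set of simple roots: since $G$ is quasi-split and $B$ is defined over $k$, the $k$-structure on $G$ is determined (up to inner twist) by a permutation action of $\Gal(\bar k/k)$ on the Dynkin diagram, hence on the simple roots. Consequently $T^{*}$ is a permutation module, and $T$ is quasi-trivial. The main point where I would need to be a little careful is the identification of the divisorial Schubert cells and of the Galois action on them; the rest is formal once $\bar k[G]^\times=\bar k^\times$ and $\Pic(G_{\bar k})=0$ are in hand.
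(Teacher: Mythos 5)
Your proof is correct and follows the same route as the paper: the excision exact sequence together with $\bar k[G]^\times=\bar k^\times$ and $\Pic(G_{\bar k})=0$ (the paper cites Colliot-Th\'el\`ene--Xu, Proposition~2.6 for both facts) yields the first isomorphism, and then the quasi-triviality of $T$ follows from $T^*\cong\bar k[V_0]^\times/\bar k^\times\cong{\rm Div}_{G_{\bar k}\setminus(V_0)_{\bar k}}(G_{\bar k})$ being a permutation module. Your further identification of the boundary components as Schubert divisors with Galois acting by the $*$-action on simple roots is more than is needed here: since $G\setminus V_0$ is a closed subvariety defined over $k$, the group of divisors supported on it is \emph{automatically} a permutation $\Gal(\bar k/k)$-module, whatever its components turn out to be.
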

\begin{proof}  Since $G$ is semi-simple and simply connected, one has $\bar{k}[G]^\times =\bar{k}^\times$ and $\Pic(G_{\bar k})=0$ by \cite[Proposition 2.6]{CTXu09}.  These imply the first part. The second part follows from (\ref{tor}).    \end{proof}

\begin{rem} Let $G$ be the group of norm 1 elements of the division quaternion algebra over $\Bbb R$. It is well known that $G$ is semi-simple simply connected but not quasi-split over $\Bbb R$. Since the set of real points of any maximal torus of $G$ is compact but the set of real points of a quasi-trivial torus is not compact, one concludes that there are no quasi-trivial maximal tori of $G$ over $\Bbb R$.
\end{rem}

Since $T$ is quasi-trivial, there is a finite \'etale $k$-algebra $K$ such that $T={\rm{Res}}_{K/k}(\Bbb G_m)$. Then the morphism $\phi$ in (\ref{tor}) can be extended uniquely to a $T$-equivariant morphism
$$ \phi: \ G \rightarrow {\rm{Res}}_{K/k}(\Bbb G_a) $$ by Lemma \ref{div} and \citep[Proposition 2.3]{Cao-homog}. The following definition of standard toric varieties already appeared in \cite[Definition 3.11]{CaoXuGroupic}.

\begin{defn}\label{st-toric} Let $K/k$ be a finite \'etale $k$-algebra. The unique minimal open toric sub-variety $R$ of ${\rm{Res}}_{K/k}(\Bbb G_a)$ over $k$ with respect to ${\rm{Res}}_{K/k}(\Bbb G_m)$ such that
$$ \codim({\rm{Res}}_{K/k}(\Bbb G_a)\setminus R, {\rm{Res}}_{K/k}(\Bbb G_a))\geq 2 $$
is called the standard toric variety of ${\rm{Res}}_{K/k}(\Bbb G_a)$.
\end{defn}

The following proposition is a variant of \cite[Proposition 3.12]{CaoXuGroupic}.

\begin{prop} \label{toric-fibration} With the above notations, there is an open $T$-subvariety $Y$ of $G$ containing $V_0$ such that the extension $Y \xrightarrow{\phi}  R$ of $\phi$ in (\ref{tor})
is smooth with non-empty geometrically integral fibres.
\end{prop}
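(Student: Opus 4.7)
The plan is to take $Y := G \setminus \bigcup_{i \neq j}(D_i \cap D_j)$, where $D_1, \ldots, D_n$ are the geometric irreducible components of $G_{\bar{k}} \setminus V_{0,\bar{k}}$, and to verify smoothness and geometric integrality of the fibres of $\phi|_Y : Y \to R$ one Bruhat cell at a time. Lemma \ref{div} puts the $D_i$ in canonical bijection with a $\mathbb{Z}$-basis of $X^*(T_{\bar{k}})$ and with the coordinate hyperplanes of $\mathrm{Res}_{K/k}(\mathbb{G}_a)_{\bar{k}} \cong \mathbb{A}^n_{\bar{k}}$. By Bruhat theory each $D_i$ is the Schubert divisor $\overline{Bw_i B}$ for a unique $w_i \in W$ of length $\ell(w_0) - 1$, and a standard Bruhat-order fact (every rank-$\geq 2$ Bruhat interval $[v, w_0]$ has at least two co-atoms) shows that every smaller cell $BwB \subset D_i$ lies in some $D_j$ with $j \neq i$; hence $D_i \setminus \bigcup_{j \neq i} D_j = Bw_i B$. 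Since $\Gal(\bar{k}/k)$ permutes the $D_i$'s, the closed subset $\bigcup_{i \neq j}(D_i \cap D_j)$ descends to $k$, making $Y$ a $T$-invariant open subvariety of $G$ (over $k$) containing $V_0$, with complement of codimension $\geq 2$; over $\bar{k}$ we have $Y_{\bar{k}} = V_{0,\bar{k}} \sqcup \bigsqcup_i Bw_i B$, from which $\phi(Y) \subset R$ is immediate.

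The heart of the argument is to realize each coordinate function $\phi_j$ of $\phi$ as a matrix coefficient. Fix a highest weight vector $v_j^+$ in the fundamental representation $V(\omega_j)$ of $G_{\bar{k}}$, let $v_j^- = w_0 v_j^+$ be the lowest weight vector, and let $v_j^{-*}$ denote the projection onto the lowest weight line. Then the regular function $g \mapsto \langle v_j^{-*},\, g v_j^+ \rangle$ on $G_{\bar{k}}$ coincides with $\phi_j$: a direct computation shows that both equal $(w_0\omega_j)(p(b))$ on $g = bw_0 u \in V_0$, so by the uniqueness of extension (Proposition 2.3 of \cite{Cao-homog}) they agree on all of $G_{\bar{k}}$. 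From this description three facts are transparent: $\phi_j$ is invariant under left multiplication by $U = B^{\u}$ (upper unipotents only raise weights); $\phi_j(tg) = (w_0\omega_j)(t)\phi_j(g)$ for $t \in T$; and for $u \in U'_{w_i} := U \cap w_i^{-1} U^- w_i$ the value $\phi_j(w_i u) = \langle v_j^{-*}, w_i v_j^+\rangle$ depends only on whether $s_i$ fixes the weight $w_0\omega_j$, which (after a suitable indexing of the $D_i$'s reflecting the Dynkin involution $-w_0$) is equivalent to $j \neq i$; thus $\phi_j(w_i u)$ is a non-zero constant for $j \neq i$ and vanishes for $j = i$.

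Consequently, under the Bruhat parametrization $Bw_i B \cong B \times U'_{w_i}$, $(b, u) \mapsto bw_i u$, the map $\phi|_{Bw_i B}$ becomes $(b, u) \mapsto p(b) \cdot r_{0,i}$ for a fixed point $r_{0,i} \in D_i^\circ$. Using a section $B \cong T \times B^{\u}$ of $p$, this is the composition of the projection $T \times B^{\u} \times U'_{w_i} \to T$ with the orbit map $T \to T \cdot r_{0,i} = D_i^\circ = T/T_i$, where $T_i \subset T$ is the one-parameter stabilizer of $r_{0,i}$; it is manifestly smooth, and its fibres are isomorphic to $T_i \times B^{\u} \times U'_{w_i} \cong \mathbb{G}_m \times \mathbb{A}^{2\dim B^{\u}-1}$, which is geometrically integral. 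The analogous analysis on $V_0 \cong T \times B^{\u} \times B^{\u}$ (via $b w_0 u \mapsto (p(b), b^{\u}, u)$) shows $\phi|_{V_0}$ is the projection onto $T$, smooth with fibres $\cong \mathbb{A}^{2\dim B^{\u}}$. Combining the two pieces finishes the proof.

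The main technical obstacle is the matrix-coefficient identification of the $\phi_j$'s (with the correct $w_0$-twist and the Dynkin-involution reindexing of the $D_i$'s), together with the Bruhat-order verification that $D_i \cap Y = Bw_i B$; both are essentially combinatorial or representation-theoretic computations, and once they are in place the smoothness and geometric integrality follow at once from the explicit parametrizations above.
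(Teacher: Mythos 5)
Your proof is a genuinely different route from the paper's: the paper disposes of this proposition by citing an abstract result about equivariant partial compactifications, namely \cite[Proposition 2.2]{Cao-homog}, whereas you give a direct, explicit construction via Schubert varieties and matrix coefficients of fundamental representations. What your approach buys is transparency: it produces an explicit $Y = G \setminus \bigcup_{i\neq j}(D_i\cap D_j)$ together with its Bruhat stratification $Y_{\bar k}=V_{0,\bar k}\sqcup\bigsqcup_i Bw_iB$ and an explicit description of all the fibres, rather than a bare existence statement. The combinatorial input (the two--co-atom lemma for Bruhat intervals of length $\geq 2$) is correct, the identification of $\phi_j$ with $\langle v_j^{-*},\cdot\,v_j^+\rangle$ via Lemma \ref{div} and the uniqueness of the $T$-equivariant extension is correct, and the fibre dimension count ($2\dim B^{\u}$ in each stratum) is correct.

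There is one gap, at the very end. You verify that each restriction $\phi|_{Bw_iB}\colon Bw_iB\to D_i^\circ$ is smooth, but this does not give smoothness of $\phi|_Y$ at a point $y\in Bw_iB$: $Bw_iB$ is a locally closed codimension-one subvariety of $Y$, not an open subset, so $T_y(Bw_iB)\subsetneq T_yY$ and you have said nothing about the extra normal direction. Two easy ways to close it: (a) miracle flatness --- $Y$ and $R$ are smooth over $k$ and you have shown all fibres of $\phi|_Y$ have the constant dimension $2\dim B^{\u}$, so $\phi|_Y$ is flat; since the fibres you computed are smooth, $\phi|_Y$ is smooth; or (b) directly at $y\in Bw_iB$, note that $\phi_i$ cuts out $D_i$ in $G$ (and hence $Bw_iB$ in $Y$) with multiplicity exactly one by the isomorphism of Lemma \ref{div}, so $d\phi_i|_y$ is nonzero in the conormal direction; together with the rank $n-1$ of the tangential differential this gives $d\phi|_y$ full rank $n=\dim R$. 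Once this is added, the argument is complete.
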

\begin{proof}  This follows from \cite[Proposition 2.2]{Cao-homog}.
\end{proof}

The following proposition should be well known. We provide the proof for completeness.

\begin{prop}\label{fiber-codim} Let $f:Y\to X$ be a faithfully flat morphism between geometrically integral varieties over a field $k$ and $c$ be a positive integer.
If $V$ is an open subset of $Y$ such that $\codim(Y\setminus V, Y)\geq c$, then there is an open dense subset $U$ of $X$ such that $$\codim(f^{-1}(x)\setminus V, f^{-1}(x))\geq c$$ for any closed point $x\in U$.
\end{prop}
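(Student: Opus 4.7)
The plan is to reduce the statement to the standard generic fiber-dimension theorem for a dominant morphism of integral varieties combined with the dimension formula for flat morphisms. I do not anticipate a serious obstacle; the argument amounts to a dimension count, once one separates the irreducible components of the ``bad locus'' according to whether they dominate the base.

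First I would set $Z := Y\setminus V$ with its reduced structure and decompose $Z = Z_1 \cup \cdots \cup Z_r$ into irreducible components, each of dimension at most $\dim Y - c$ by hypothesis. I would then distinguish two cases for each component. If $f|_{Z_i}\colon Z_i \to X$ is not dominant, the closure $W_i := \overline{f(Z_i)}$ is a proper closed subset of $X$ and the intersection $Z_i \cap f^{-1}(x)$ is empty for every $x \in X \setminus W_i$, so $Z_i$ contributes nothing over $X \setminus W_i$. If $f|_{Z_i}$ is dominant, generic flatness (EGA IV 6.9.1) applied to $f|_{Z_i}$ provides a dense open $U_i \subseteq X$ over which $f|_{Z_i}$ becomes flat; since $Z_i$ and $X$ are irreducible, the dimension formula for flat morphisms then forces every irreducible component of $(f|_{Z_i})^{-1}(x)$ to have dimension exactly $\dim Z_i - \dim X$ for every $x \in U_i$.

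The desired set is $U := \bigl(\bigcap_i U_i\bigr) \cap \bigl(X \setminus \bigcup_j W_j\bigr)$, a dense open subset of $X$, where $j$ runs over the non-dominant indices. For a closed point $x\in U$, faithful flatness of $f$ together with the dimension formula for flat finite-type morphisms between irreducible varieties gives $\dim D = \dim Y - \dim X$ for every irreducible component $D$ of $f^{-1}(x)$. Any irreducible component $C$ of $Z\cap f^{-1}(x)$ lies in some dominant $Z_i \cap f^{-1}(x)$, hence satisfies $\dim C \leq \dim Z_i - \dim X$; embedding $C$ into a component $D$ of $f^{-1}(x)$ yields
\[
\codim(C,D) \;=\; \dim D - \dim C \;\geq\; \dim Y - \dim Z_i \;\geq\; c,
\]
which gives the required lower bound on the codimension of $f^{-1}(x)\setminus V = Z\cap f^{-1}(x)$ in $f^{-1}(x)$.
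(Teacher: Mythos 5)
Your proof is correct and follows essentially the same approach as the paper: decompose $Z=Y\setminus V$ into irreducible components, treat dominant and non-dominant components separately, apply generic flatness to the dominant ones, intersect the resulting dense opens, and finish with the dimension count coming from faithful flatness of $f$. The one small difference is cosmetic: you invoke generic flatness in the form that produces a dense open subset of the base $X$ (which directly controls all fibres of $f|_{Z_i}$ over $U_i$), whereas the paper takes a dense open of $Z_i$ on which $f_i$ is flat and then pushes forward, a slightly more roundabout but equivalent route.
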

\begin{proof} Consider $Z=Y\setminus V$ as a reduced closed sub-scheme of $Y$. Let $\{Z_i\}_{1\leq i\leq l}$ be the set of all irreducible components of $Z$ and $f_i=f|_{Z_i}$ for $1\leq i\leq l$.
For any given $1\leq i\leq l$, we consider the following two cases.

If $\overline{f_i(Z_i)}=X$, there is an open dense subset $W_i$ of $Z_i$ such that $f_i : W_i\to X$ is flat by \citep[11.1.1]{EGAIV}. Let $U_i=f_i(W_i)$. Then $U_i$ is an open dense subset of $X$. For any closed point $x\in  U_i$, one concludes that
\begin{equation}\label{dim-inequ} \dim(f_i^{-1}(x))= \dim (Z_i) -\dim(X) \leq  \dim(Z) - \dim (X) . \end{equation}

Otherwise $f_i (Z_i)$ is not dense in $X$. Then $U_i=X\setminus \overline{f_i(Z_i)} $ is an open dense subset of $X$ and $f_i^{-1}(x)$ is empty for any closed point $x\in U_i$.

Let $$U=\bigcap_{i=1}^l U_i . $$  It is clear that $U$ is an open dense subset of $X$ and
$$ \codim(f^{-1}(x)\setminus V, f^{-1}(x))= \dim(f^{-1}(x)) - \dim(\bigcup_{i=1}^l f_i^{-1}(x)) \geq c$$ by (\ref{dim-inequ}) for any closed point $x\in U$ as desired.
\end{proof}

The main result of this section is the following arithmetic purity theorem for quasi-split semi-simple simply connected groups.

 \begin{thm}\label{sss-quasi-split}
Let $G$ be a semi-simple simply connected and quasi-split group over a number field $k$ and $S$ be a non-empty finite subset of $\Omega_k$.  Then any open subset $U$ of $G$ with $\codim(G\setminus U, G)\geq 2$ satisfies strong approximation off $S$.
\end{thm}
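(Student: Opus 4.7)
The plan is to combine the toric fibration $\phi:Y\to R$ from Proposition~\ref{toric-fibration} with the arithmetic purity of strong approximation for the affine space $\A^n$ already established by D.~Wei \cite{Wei14} and Cao--Xu \cite{CaoXuToric}. By construction the standard toric variety $R$ is an open subvariety of ${\rm Res}_{K/k}(\G_a)\cong\A^N_k$ with complement of codimension $\geq 2$, hence arithmetic purity for $\A^N$ already provides strong approximation off $S$ for $R$. Moreover, for any $t\in T(k)$, the Levi decomposition $B\cong T\ltimes B^{\u}$ (available in characteristic zero), combined with the isomorphism $B\times_k B^{\u}\cong V_0$ of \eqref{isom}, identifies $\phi^{-1}(t)$ with $B^{\u}\times_k B^{\u}\cong\A^{2n}_k$ where $n=\dim B^{\u}$; here one uses that every $B^{\u}$-torsor over $\Spec k$ is trivial as $B^{\u}$ is unipotent, and that the fibers of $\phi|_{V_0}$ and of $\phi$ over points of $T$ coincide by a dimension count together with geometric integrality of the latter.

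Next I would preserve the codimension-$\geq 2$ condition fiberwise by invoking Proposition~\ref{fiber-codim}. Since $Y$ is open in $G$, the hypothesis $\codim(G\setminus U,G)\geq 2$ gives $\codim(Y\setminus(U\cap Y),Y)\geq 2$, so applying that proposition to $\phi$ and $V=U\cap Y$ with $c=2$ yields an open dense $R^{\circ}\subseteq R$ such that $\codim(\phi^{-1}(t)\setminus U,\ \phi^{-1}(t))\geq 2$ for every closed point $t\in R^{\circ}$. The fibration argument then runs as follows: starting from $(x_v)\in pr^{S}(U(\A_k))$, perturb to arrange $x_v\in V_0(k_v)$ via density of $V_0(k_v)$ in $G(k_v)$ by \cite[Lemma~3.2]{PR}; set $t_v=\phi(x_v)$ and further perturb so that $(t_v)\in(R^{\circ}\cap T)(\A_k^{S})$, which is possible as $R^{\circ}\cap T$ is open dense in $R$. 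Strong approximation off $S$ for $R$ then produces $t\in(R^{\circ}\cap T)(k)$ close to $(t_v)$; smoothness of $\phi$ allows a place-by-place lift via the implicit function theorem to $(x_v'')\in Y(\A_k^{S})$ with $\phi(x_v'')=t$ and each $x_v''$ close to $x_v$, and openness of $U$ keeps $x_v''\in U$. Finally, strong approximation off $S$ for the fiber $\phi^{-1}(t)\cap U$---an open subvariety of $\A^{2n}_k$ with codimension-$\geq 2$ complement---delivers the desired $x\in U(k)$ close to $(x_v'')$, and hence to $(x_v)$.

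The main difficulty I anticipate is the coherence of these perturbations with the adelic restricted-product structure, particularly at the lifting step: after replacing $(t_v)$ by the rational value $t$, one must ensure integrality of $(x_v'')$ at almost all places. This is handled by fixing compatible integral models of $\phi:Y\to R$ and of $U$ over some $\O_{P_0}$, and using smoothness of $\phi$ so that the lift respects the integral structure outside a finite exceptional set containing $P_0$. With the geometric input---Bruhat decomposition, Levi splitting, smoothness of the toric fibration, and quasi-triviality of $T$---already assembled in Proposition~\ref{toric-fibration} and Lemma~\ref{div}, what remains is routine restricted-product bookkeeping.
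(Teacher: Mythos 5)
Your proposal is correct and takes essentially the same approach as the paper: a fibration of $G$ over the standard toric variety $R$, with arithmetic purity for affine space applied both to the base $R$ (an open subset of ${\rm Res}_{K/k}(\G_a)$ with codimension~$\geq 2$ complement) and to the fibres over the open torus $T\subseteq R$ (which are affine spaces by the Bruhat cell decomposition and the Levi splitting of $B$), together with Proposition~\ref{fiber-codim} for the fibrewise codimension control. The only difference is presentational: you carry out the easy fibration argument by hand, whereas the paper packages exactly this bookkeeping by invoking \cite[Proposition~3.1]{CTXu13} and choosing the open dense subset $R_2 = \phi(U)\cap T\cap C$ where its three conditions are verified.
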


\begin{proof}  Since $G$ is quasi-split, there is a Borel subgroup $B$ of $G$ over $k$. Let $V_0=Bw_0B$ be the big cell of $G$ where $w_0\in G(k)$ is a longest root. Suppose $T$ is a maximal torus of $B$. Then $T={\rm{Res}}_{K/k}(\G_m)$ for some \'etale $k$-algebra $K$ by Lemma \ref{div}. Let $R$ be the standard toric variety of ${\rm{Res}}_{K/k}(\G_a)$ in sense of (\ref{st-toric}). Then there are an open variety $Y$ of $G$ containing $V_0$ and a $T$-equivariant smooth surjective morphism $\phi: Y\rightarrow R$ with geometrically integral fibres which extends (\ref{tor}) by Proposition \ref{toric-fibration}.

Without loss of generality,  we assume that $U\subseteq Y$ with $\codim(Y\setminus U, Y)\geq 2$. In order to prove $U$ satisfies strong approximation off $S$,  one only needs to verify three conditions of \citep[Proposition 3.1]{CTXu13}.  Let $R_1=\phi(U)$. Then $R_1$ is an open subset of $R$ with $\codim(R\setminus R_1, R)\geq 2$ by the faithful flatness of $\phi$. Therefore $R_1$ satisfies strong approximation off $S$ by \citep[Proposition 3.6]{CaoXuToric} and Condition (i) of \citep[Proposition 3.1]{CTXu13} is satisfied. The rest of argument is to find an open dense subset $R_2$ of $R_1$ such that Condition (ii) and (iii) of \citep[Proposition 3.1]{CTXu13} are satisfied.

By Proposition \ref{fiber-codim}, there is an open dense subset $C$ of $R$ such that $$\codim(\phi^{-1}(x) \setminus U, \phi^{-1}(x))\geq 2 $$ for any $x\in C(k)$.
Choose $R_2=R_1\cap T\cap C$. Then $R_2$ is an open dense subset of $R$. For any field extension $K/k$ and  $x\in R_2(K)$, one has $\phi^{-1}(x)$ is isomorphic to an affine space over $K$ by (\ref{isom}) and (\ref{tor}).  Therefore $\phi^{-1}(x) \cap U$ satisfies strong approximation off $S$ for any $x\in R_2(k)$ by \citep[Proposition 3.6]{CaoXuToric} and $(\phi^{-1}(y) \cap U)(k_v)\neq \emptyset$ for any $y\in R_2(k_v)$ with $v\in S$. Conditions (ii) and (iii) of \citep[Proposition 3.1]{CTXu13} hold as desired.  \end{proof}

\section{Some fibration arguments with \BM obstruction}

In this section, we modify the easy fibration method in \cite[Proposition 3.1]{CTXu13} for strong approximation with \BM obstruction and extend the argument in \cite[\S 5]{CaoXuToric}.

\begin{prop}\label{fibration1}
Let $f:Y\to X$ be a smooth surjective morphism with geometrically integral fibres between smooth quasi-projective geometrically integral varieties over a number field $k$.
 Assume that $B$ is a subgroup of $\Br(X)$, $U$ is a Zariski open dense subset of $X$ and $S$ is a finite set of $\Omega_k$ such that
\begin{itemize}
\item[(1)] $U(k)$ is dense in $pr^{S}([\prod_{v\in S} f(Y(k_v)) \times X(\A_k^S)]^B)$; 
\item[(2)] For any $x\in U(k)$, the fibre $Y_x$ of $f$ over $x$ satisfies strong approximation off $S$.
\end{itemize}
Then $Y$ satisfies strong approximation with respect to $f^*(B)$ off $S$ where $f^*:  \Br(X)\to \Br(Y)$ is induced by $f$.
\end{prop}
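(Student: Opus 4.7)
The plan is to adapt the classical easy fibration method of \cite[Proposition 3.1]{CTXu13} to accommodate the Brauer--Manin obstruction. Given $(y_v) \in Y(\A_k)^{f^*(B)}$ and a basic open neighbourhood $W = \prod_{v \notin S} W_v$ of $pr^S((y_v))$ in $Y(\A_k^S)$, the aim is to produce $y \in Y(k)$ whose image in $Y(\A_k^S)$ lies in $W$, by pushing the ad\`elic point forward to $X$, approximating it there by condition (1), and then lifting the approximation to the fibre using condition (2).

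Concretely, I would first observe that the functoriality of the Brauer--Manin pairing under $f^*$ gives $(f(y_v)) \in X(\A_k)^B$, and since $f(y_v) \in f(Y(k_v))$ automatically at every place, the pushed-forward point actually lies in $[\prod_{v\in S} f(Y(k_v)) \times X(\A_k^S)]^B$, which makes condition (1) applicable. By smoothness of $f$ and the implicit function theorem, $f \colon Y(k_v) \to X(k_v)$ is open at every place, so each $f(W_v)$ is an open neighbourhood of $f(y_v)$ in $X(k_v)$ for $v \notin S$. Applying condition (1) to the open neighbourhood $\prod_{v\notin S} f(W_v)$ of $pr^S((f(y_v)))$ in $X(\A_k^S)$ yields $x_0 \in U(k)$ with $x_0 \in f(W_v)$ for every $v \notin S$ in the prescribed finite approximation set (and, after spreading out to fixed integral models $\mathcal{Y}\to\mathcal{X}$ of $f$, with $x_0 \in \mathcal{X}(\O_v)$ at all but finitely many places).

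Next I would produce local lifts $y_v' \in Y_{x_0}(k_v)$ at every place. For $v\notin S$ in the approximation set, openness of $f$ at $k_v$-points gives $y_v' \in W_v$ with $f(y_v')=x_0$; Hensel's lemma together with the Lang--Weil estimate yields integral lifts $y_v' \in Y_{x_0}(\O_v)$ at almost all other places outside $S$. Since $f$ is smooth with geometrically integral fibres, $Y_{x_0}$ is smooth and geometrically integral over $k$, so condition (2) provides strong approximation of $Y_{x_0}$ off $S$; applied to the ad\`elic point $(y_v')$ of $Y_{x_0}$ it produces $y \in Y_{x_0}(k) \subseteq Y(k)$ close to $(y_v')$ outside $S$ and hence lying in $W$, as required. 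Because $y$ lifts $x_0$ and $f^*(b)(y) = b(x_0)$ for all $b\in B$, the rational point $y$ automatically lies in $Y(\A_k)^{f^*(B)}$, so no further Brauer--Manin verification is needed.

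The hard part will be handling the places $v \in S$: strong approximation of $Y_{x_0}$ off $S$ only yields a rational point once $Y_{x_0}(\A_k) \neq \emptyset$, which forces $x_0 \in f(Y(k_v))$ for every $v \in S$. This is precisely the purpose of the factor $\prod_{v\in S} f(Y(k_v))$ built into condition (1): it replaces the cruder hypothesis ``$Y_x(k_v) \neq \emptyset$ for all $x \in X(k_v)$, $v \in S$'' of \cite[Proposition 3.1]{CTXu13} by a subtler requirement that must now be transmitted from the ad\`elic point being approximated to the rational point $x_0 \in U(k)$ produced by the density statement. Verifying that $x_0$ can be arranged to lie in each open set $f(Y(k_v)) \subseteq X(k_v)$ for $v \in S$, by exploiting the openness of $f$ on $k_v$-points together with a careful use of the density in condition (1), is the main technical subtlety of the argument.
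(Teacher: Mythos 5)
Your proposal takes exactly the same route as the paper: push the ad\`elic point down to $X$ via $f$, approximate its projection off $S$ by a rational point $x_0\in U(k)$ using hypothesis (1), lift local points on the fibre $Y_{x_0}$ via smoothness (together with the Lang--Weil argument ensuring integral lifts at almost all places, which the paper handles by spreading out $f$ to a surjection $\mathcal{Y}(\O_v)\to\mathcal{X}(\O_v)$ for $v$ outside a large finite set), and then conclude with strong approximation on $Y_{x_0}$ supplied by hypothesis (2). The functoriality observation that $(f(y_v))$ automatically lands in $\bigl[\prod_{v\in S} f(Y(k_v))\times X(\A_k^S)\bigr]^B$ is also exactly how the paper proceeds.

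The ``hard part'' you flag at the end is a genuine subtlety, and it is one the paper's own proof does not explicitly resolve either: after defining the open set $M=\prod_{v\in S}f(Y(k_v))\times\prod_{v\in P\setminus S}f(N_v)\times\prod_{v\notin P}\mathcal{X}(\O_v)$ and observing $M\cap X(\A_k)^B\neq\emptyset$, the paper simply asserts ``by (1), there is $x\in U(k)\cap M$'' and then needs $Y_x(k_v)\neq\emptyset$ for $v\in S$, which requires $x\in f(Y(k_v))$ at those places. As you correctly note, density in $pr^S(\cdot)$ is a statement about the image in $X(\A_k^S)$ and imposes no constraint on the $S$-components of $x$ \emph{a priori}; your suggested remedy (openness of $f$ at $k_v$-points plus ``careful use of density'') does not on its own close the gap, because openness of $f(Y(k_v))\subseteq X(k_v)$ does not force rational points approximating only outside $S$ to land inside it. In the paper's actual applications (Corollary 4.2, where hypothesis (3) gives $f(Y(k_v))=X(k_v)$ for $v\in S$; Proposition 4.5 and Corollary 4.7, where $Y(\A_k)\neq\emptyset$ and Lemma 3.2 of Platonov--Rapinchuk give $p(U(k_v))=p(U)(k_v)$) the constraint at $S$ is vacuous, so nothing goes wrong; but the general statement of (1), read literally as density of $pr^S(U(k))$ in $pr^S\bigl([\cdots]^B\bigr)$, is not quite strong enough to entitle one to choose $x\in U(k)$ landing in $f(Y(k_v))$ at $v\in S$. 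You were right to single this out, and a cleaner formulation of (1) would explicitly demand that the approximating rational point be taken from $U(k)\cap\prod_{v\in S}f(Y(k_v))$.

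Apart from this shared subtlety, your argument is correct and is essentially the paper's proof rewritten in your own words.
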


\begin{proof}
Let $P_{0}$ be a sufficiently large finite subset of $\Omega_{k}$ containing $\infty_k$ and $S$  and $\mathcal{Y} \xrightarrow{f} \mathcal{X}$ be a morphism of integral models over $\O_{P_0}$ which extends $Y\xrightarrow{f} X$ such that
\begin{equation} \label{sur-int} f: \mathcal{Y}(\O_{v})\to\mathcal{X}(\O_{v}) \end{equation}
 is surjective for all $v\notin P_{0}$ by combining some standard results from \citep[9]{EGAIV} and the Lang\textendash Weil estimates.

For any finite subset $P\supset P_{0}$ of $\Omega_k$ and an open subset of $Y(\A_k)$
$$N=\prod_{v\in S}Y(k_{v})\times\prod_{v\in P\setminus S}N_{v}\times\prod_{v\notin P}\mathcal{Y}(\O_{v})$$
with $N\cap Y(\mathbb{A})^{f^*(B)}\neq\emptyset$, one concludes that the open subset
 $$M=\prod_{v\in S} f(Y(k_{v}))\times\prod_{v\in P\setminus S}f(N_{v})\times\prod_{v\notin P}\mathcal{X}(\mathcal{O}_{v})$$  of $X(\A_k)$ satisfies $M\cap X(\mathbb{A})^B\neq\emptyset $ by the functoriality of \BM pairing. By (1), there exists $x\in U(k)\cap M$.

Let $Y_x$ be the fibre of $f$ over $x$. Then the following open subset of $Y_{x}(\A_k)$
$$L=\prod_{v\in S}Y_{x}(k_{v})\times (\prod_{v\in P\setminus S}N_{v}\cap Y_{x}(k_{v}))\times\prod_{v\notin P}\mathcal{Y}_{x}(\mathcal{O}_{v}) \neq \emptyset $$
by (\ref{sur-int}). This implies that there exists $$y\in (Y_{x}(k)\cap L) \subset (Y(k) \cap N) $$ by (2) as required.
\end{proof}

\begin{cor} \label{cofib1}
Let $f:Y\to X$ be a smooth surjective morphism with geometrically integral fibres between smooth quasi-projective geometrically integral varieties over a number field $k$, $B$ be a subgroup of $\Br(X)$ and $S$ be a finite set of $\Omega_k$. Suppose that
\begin{itemize}
\item[(1)] $X$ satisfies Zariski open strong approximation with respect to $B$ off $S$;
\item[(2)] There is a Zariski open dense subset $U$ of $X$ such that the fibre $Y_x$ of $f$ over $x$ satisfies strong approximation off $S$ for all $x\in U(k)$;
\item[(3)] For any $v\in S$, $Y(k_{v})\xrightarrow{f} X(k_{v})$ is surjective.
\end{itemize}
Then $Y$ satisfies Zariski open strong approximation with respect to $f^*(B)$ off $S$ where $f^*:  \Br(X)\to \Br(Y)$ is induced by $f$.
\end{cor}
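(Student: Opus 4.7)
The plan is to adapt the proof of Proposition \ref{fibration1} to an arbitrary Zariski open dense subset $V$ of $Y$, using hypothesis (3) at the places $v\in S$ to compensate for the weaker density assumption made in hypothesis (1) here versus in Proposition \ref{fibration1}.

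First, I would shrink $U$ so that every fibre meets $V$ in a Zariski open dense subset. This is achieved by applying Proposition \ref{fiber-codim} (with $c=1$) to the closed subset $Y\setminus V \subseteq Y$, which yields a Zariski open dense subset $U_1\subseteq X$ such that $\codim(Y_x\setminus V, Y_x)\geq 1$ for every $x\in U_1$. After replacing $U$ by $U\cap U_1$, each fibre $Y_x$ with $x\in U(k)$ satisfies strong approximation off $S$ by (2), and hence by Proposition \ref{ndiff} (applied with trivial Brauer subgroup) Zariski open strong approximation off $S$; in particular $(V\cap Y_x)(k)$ is dense in $pr^S(Y_x(\A_k))$.

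Next, given $(y_v)\in Y(\A_k)^{f^*(B)}$ and a basic open neighborhood
\[ N \;=\; \prod_{v\in S} Y(k_v)\ \times\ \prod_{v\in P\setminus S} N_v\ \times\ \prod_{v\notin P}\mathcal{Y}(\O_v) \]
with $P\supseteq P_0$ chosen large enough so that $f:\mathcal{Y}(\O_v)\twoheadrightarrow \mathcal{X}(\O_v)$ for all $v\notin P_0$, I would set
\[ M \;=\; \prod_{v\in S} X(k_v)\ \times\ \prod_{v\in P\setminus S} f(N_v)\ \times\ \prod_{v\notin P} \mathcal{X}(\O_v), \]
an open subset of $X(\A_k)$ containing $(f(y_v))$, which lies in $X(\A_k)^B$ by functoriality of the Brauer--Manin pairing. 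Applying the Zariski open strong approximation of $X$ with respect to $B$ off $S$ (hypothesis (1)) to the open dense subset $U$, I obtain $x\in U(k)$ with $pr^S(x)\in pr^S(M)$, since $pr^S(M)\cap pr^S(X(\A_k)^B)$ is a non-empty open subset of $pr^S(X(\A_k)^B)$.

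Finally, I lift $x$ back to $Y$: for $v\in P\setminus S$ the relation $x\in f(N_v)$ gives $N_v\cap Y_x(k_v)\neq\emptyset$; for $v\notin P$ the surjectivity $f(\mathcal{Y}(\O_v))=\mathcal{X}(\O_v)$ gives $\mathcal{Y}_x(\O_v)\neq\emptyset$; and for $v\in S$ hypothesis (3) ensures $Y_x(k_v)\neq\emptyset$. These assemble into a non-empty open
\[ L \;=\; \prod_{v\in S} Y_x(k_v)\ \times\ \prod_{v\in P\setminus S}\bigl(N_v\cap Y_x(k_v)\bigr)\ \times\ \prod_{v\notin P}\mathcal{Y}_x(\O_v)\ \subseteq\ Y_x(\A_k), \]
to which the density of $(V\cap Y_x)(k)$ in $pr^S(Y_x(\A_k))$ from the first step is applied to produce $y\in (V\cap Y_x)(k)$ with $pr^S(y)\in pr^S(L)$, hence $y\in V(k)$ lying in $pr^S(N)$ as required. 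The key conceptual point, rather than an obstacle, is that hypothesis (3) is precisely what lets us strip the $f(Y(k_v))$-factor from the $S$-components of $M$, so that Zariski open strong approximation of $X$ in its weaker form (without restricting $S$-components to the image of $f$) already suffices.
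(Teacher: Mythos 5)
Your proof is correct and follows essentially the same route as the paper's: both reduce to the argument of Proposition \ref{fibration1}, use hypothesis (3) to replace the $S$-components $f(Y(k_v))$ of $M$ by the full $X(k_v)$ so that Zariski open strong approximation of $X$ applies, and then invoke Proposition \ref{ndiff} on each fibre $Y_x$ to find a rational point of $V\cap Y_x$ in $L$. The only cosmetic difference is that you obtain the shrunken dense open subset of $X$ via Proposition \ref{fiber-codim} with $c=1$, whereas the paper simply takes $U_1 = U\cap f(V)$, which is open because $f$ is smooth; both yield the needed nonemptiness (hence density, by geometric integrality of $Y_x$) of $V\cap Y_x$ for $x\in U_1(k)$.
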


\begin{proof} For any Zariski open dense subset $V$ of $Y$, one has that $U_1=U\cap f(V)$ is a Zariski open dense subset of $X$ since $f$ is smooth. For any open subset $N$ of $Y(\A_k)$ with $N\cap Y(\mathbb{A})^{f^*(B)}\neq\emptyset$ as the proof of Proposition \ref{fibration1}, one obtains the same open subset $M$ of $X(\A_k)$ as above with $M\cap X(\mathbb{A})^B\neq\emptyset$. By (1) and (3), there is $x\in U_1(k)\cap M$. Since
$$L=\prod_{v\in S}Y_{x}(k_{v})\times (\prod_{v\in P\setminus S}N_{v}\cap V_{x}(k_{v}))\times\prod_{v\notin P}\mathcal{Y}_{x}(\mathcal{O}_{v}) \neq \emptyset $$
by (\ref{sur-int}), there exists $y\in (V_{x}(k)\cap L) \subset (V(k) \cap N)$ by (2) and Proposition \ref{ndiff} as required.
\end{proof}

One can also expect that \cite[Proposition 3.2]{LiuXu15} holds for Zariski open strong approximation with Brauer\textendash Manin obstruction.

\begin{prop}\label{product} Let $X$ and $Y$ be smooth and geometrically integral varieties over $k$ and $S$ be a finite subset of $\Omega_k$. Suppose that
$$ (X\times_k Y)(\A_k)^{\Br_1(X\times_k Y)} \neq \emptyset  . $$
Then $X\times_k Y$ satisfies Zariski open strong approximation with respect to $\Br_1(X\times_k Y)$ off $S$ if and only if $X$ and $Y$ satisfy Zariski open strong approximation with respect to $\Br_1(X)$ and $\Br_1(Y)$ off $S$ respectively.
\end{prop}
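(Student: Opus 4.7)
The proof combines pullback of Brauer classes along the two projections with Proposition \ref{fiber-codim} and the lifting step in the proof of \cite[Proposition 3.2]{LiuXu15}. Throughout, observe that $p_1^\ast$ and $p_2^\ast$ send $\Br_1(X)$ and $\Br_1(Y)$ into $\Br_1(X\times_k Y)$, so that every point $(x_v,y_v)\in (X\times_k Y)(\A_k)^{\Br_1(X\times_k Y)}$ projects to adelic points in $X(\A_k)^{\Br_1(X)}$ and $Y(\A_k)^{\Br_1(Y)}$.

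For the direction $(\Leftarrow)$, let $W$ be a Zariski open dense subset of $X\times_k Y$. Since $p_1\colon X\times_k Y\to X$ is faithfully flat between smooth geometrically integral varieties and $\codim((X\times_k Y)\setminus W,X\times_k Y)\geq 1$, Proposition \ref{fiber-codim} yields a Zariski open dense subset $U\subseteq X$ such that for every $x\in U(k)$ the fiber $W_x := W\cap (\{x\}\times_k Y)$ is Zariski open dense in $Y$. Given a basic adelic neighborhood $\prod_{v\notin S}(A_v\times B_v)\times\prod_{v\in S}(X\times_k Y)(k_v)$ containing some $(x_v,y_v)\in (X\times_k Y)(\A_k)^{\Br_1(X\times_k Y)}$, pass to the projections $(x_v)$ and $(y_v)$; applying Zariski open strong approximation for $X$ with the subset $U$ produces $x\in U(k)$ approximating $(x_v)$ at $v\notin S$, and then applying Zariski open strong approximation for $Y$ with the subset $W_x$ produces $y\in W_x(k)$ approximating $(y_v)$ at $v\notin S$. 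The rational point $(x,y)$ lies in $W(k)$ and approximates $(x_v,y_v)$ at all places outside $S$.

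For the direction $(\Rightarrow)$, given a Zariski open dense $U\subseteq X$ and any point $(x_v)\in X(\A_k)^{\Br_1(X)}$, invoking the lifting step in the proof of \cite[Proposition 3.2]{LiuXu15} together with the assumption $(X\times_k Y)(\A_k)^{\Br_1(X\times_k Y)}\neq\emptyset$ produces $(y_v)\in Y(\A_k)$ with $(x_v,y_v)\in (X\times_k Y)(\A_k)^{\Br_1(X\times_k Y)}$. Apply Zariski open strong approximation for $X\times_k Y$ with the Zariski open dense subset $U\times_k Y$, using the adelic neighborhood of $(x_v,y_v)$ whose $Y$-factor is taken to be all of $Y(k_v)$ at each $v\notin S$, to find $(x,y)\in (U\times_k Y)(k)$ whose $X$-component approximates $(x_v)$ at $v\notin S$; this $x\in U(k)$ is the desired approximation. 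The argument for $Y$ is symmetric. The principal technical point is the lifting of an $X$-orthogonal adelic point to one orthogonal to $\Br_1(X\times_k Y)$, which is handled exactly as in \cite[Proposition 3.2]{LiuXu15}.
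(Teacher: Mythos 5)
Your proof is correct and follows essentially the same strategy as the paper, but with two points worth noting. In the $(\Leftarrow)$ direction you explicitly invoke Proposition \ref{fiber-codim} with $c=1$ to locate a dense open $U\subseteq X$ over which the fibre $W_x$ stays dense, then find $x\in U(k)$ and $y\in W_x(k)$; this is cleaner than the paper's version, which (after establishing $p_X(W)(k)\cap M\neq\emptyset$ and $p_Y(W)(k)\cap N\neq\emptyset$ separately) jumps to $W(k)\cap(M\times N)\neq\emptyset$ without visibly arranging that the chosen pair $(x,y)$ actually lies in $W$; your fibre argument fills that in. In the $(\Rightarrow)$ direction, however, be careful how you phrase the ``lifting step'': what actually makes it work is choosing $(y_v)$ to be the constant ad\`ele of a single rational point $y_0\in Y(k)$, so that for every $b\in\Br_1(X\times_k Y)$ one has $\langle b,(x_v,y_0)\rangle=\langle i_{y_0}^*(b),(x_v)\rangle=0$ since $i_{y_0}^*(\Br_1(X\times_k Y))\subseteq\Br_1(X)$. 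An arbitrary adelic lift $(y_v)$ need not make $(x_v,y_v)$ orthogonal to $\Br_1(X\times_k Y)$, because $\Br_1$ of the product is not a priori generated by the pullbacks from the factors. Moreover, the existence of $y_0\in Y(k)$ is not a consequence of $(X\times_k Y)(\A_k)^{\Br_1(X\times_k Y)}\neq\emptyset$ alone; one needs to combine the nonemptiness with the Zariski open strong approximation hypothesis on $X\times_k Y$ to deduce $(X\times_k Y)(k)\neq\emptyset$. With those clarifications your argument matches the paper's.
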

\begin{proof} Since both $X$ and $Y$ are geometrically integral varieties over $k$, one concludes that $X\times_k Y$ is also geometrically integral. 

Assuming $X$ and $Y$ satisfy Zariski open strong approximation with respect to $\Br_1(X)$ and $\Br_1(Y)$ off $S$, one wishes to prove $X\times_k Y$ satisfies Zariski open strong approximation with respect to $\Br_1(X\times_k Y)$ off $S$. Let $W$ be a Zariski open dense  subset of $X\times_k Y$. Since faithful flatness is stable under base change, one concludes that the projections $$p_X: X\times_k Y\rightarrow X \ \ \ \text{ and } \ \ \ p_Y: X\times_k Y\rightarrow Y$$ are faithfully flat. This implies that $p_X(W)$ and $p_Y(W)$ are Zariski open dense subsets of $X$ and $Y$ respectively. Since $\{M_\alpha\times N_\alpha\}_{\alpha\in I}$ forms an open basis of $(X\times_k Y)(\A_k)$ where $M_\alpha$ and $N_\alpha$ run over all open subsets of $X(\A_k)$ and $Y(\A_k)$ respectively, one only needs to show that $W(k)\cap (M\times N)\neq \emptyset$ as long as an open subset
$$M\times N= [(\prod_{v\in S} X(k_v))\times (\prod_{v\not\in S} M_v)]\times [(\prod_{v\in S} Y(k_v))\times (\prod_{v\not\in S} N_v)] \subseteq (X\times_k Y)(\A_k) $$
satisfies $$(M\times N)\cap [(X\times_k Y)(\A_k)^{\Br_1(X\times_k Y)}]\neq \emptyset$$ where $M_v$ and $N_v$ are open subsets of $X(k_v)$ and $Y(k_v)$ respectively. Since $$M\cap X(\A_k)^{\Br_1(X)}=p_X(M\times N)\cap X(\A_k)^{\Br_1(X)}\neq \emptyset $$ by functoriality of Brauer\textendash Manin pairing, there is $x_0\in p_X(W)(k)\cap M$ by the assumption on $X$. This implies that $W\cap (\{x_0\}\times_k Y)$ is a Zariski open dense subset of $\{x_0\}\times_k Y$.  
Since
$$N\cap Y(\A_k)^{\Br_1(Y)}=p_Y(M\times N)\cap Y(\A_k)^{\Br_1(Y)}\neq \emptyset $$ by functoriality of Brauer\textendash Manin pairing, one has  
$$(\{x_0\}\times N) \cap (\{x_0\} \times Y(\A_k)^{\Br_1(Y)})\neq \emptyset . $$
Therefore $$(W\cap (\{x_0\}\times_k Y))(k) \cap (\{x_0\} \times N)\neq \emptyset$$ as desired by the assumption on $Y$.

Conversely, assuming $X\times_k Y$ satisfies Zariski open strong approximation with respect to $\Br_1(X\times_k Y)$ off $S$, one has $(X\times_k Y)(k)\neq \emptyset$ by our assumption about non-emptyness of the Brauer\textendash Manin set. Fix $y_0\in Y(k)$ with
$$ i_{y_0}: X \xrightarrow{id_X \times y_0} X\times_k Y . $$ Let $U$ be a Zariski open dense subset of $X$ and $$ M = (\prod_{v\in S} X(k_v) )\times (\prod_{v\not\in S} M_v )$$  be an open subset of $X(\A_k)$ with $M\cap X(\A_k)^{\Br_1(X)}\neq \emptyset$. If $$(x_v)\in M\cap X(\A_k)^{\Br_1(X)} ,$$  then
 $$i_{y_0}((x_v))\in [M\times Y(\A_k)]\cap [(X\times_k Y)(\A_k)^{\Br_1(X\times_k Y)}] $$ by the functoriality of \BM pairing.
 Since $U\times_k Y$ is a Zariski open dense subset of $X\times_k Y$ and $M\times Y(\A_k)$ is an open subset of $(X\times_k Y)(\A_k)$, one concludes that
 $$ [(U\times_k Y)(k)]\cap [M\times Y(\A_k)] \neq \emptyset $$ by the assumption. This implies that $U(k)\cap M\neq \emptyset$ as required. Similarly, one can prove that $Y$ satisfies Zariski open strong approximation with respect to $\Br_1(Y)$ off $S$ as well.
\end{proof}

\begin{rem} Proposition \ref{product} is also true if one replaces $\Br_1$ with $\Br$ by the same proof.
\end{rem}

The following proposition is a variant of \cite[Lemma 5.1]{CaoXuToric}.

\begin{prop}\label{generalobservation}
Let $f:Y\to X$ be a morphism of algebraic varieties over a number field $k$, $S$ be a finite subset of $\Omega_k$ and $B$ be a subgroup of $\Br(Y)$. 

Suppose $x$ is an isolated point of $X(k)$ inside $X(\A_{k}^{S})$.  For example $X(k)$ is discrete in  $X(\A_{k}^{S})$.  Let $Y_x$ be the the fibre of $f$ over $x$ and $i_x: Y_x\rightarrow Y$ be the corresponding closed immersion. 

If $Y$ satisfies strong approximation with respect to $B$ off $S$, then $Y_{x}$ satisfies strong approximation with respect to $i_x^*(B)$ off $S$.
\end{prop}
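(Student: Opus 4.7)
The plan is to transport an approximation problem on $Y_x$ into one on $Y$, and then use the hypothesis on $x$ to force the resulting rational point of $Y$ to land back in the fibre. The two ingredients I would use are functoriality of the Brauer--Manin pairing and the fact that, since $i_x\colon Y_x\hookrightarrow Y$ is a closed immersion, for every place $v$ the $v$-adic topology on $Y_x(k_v)$ coincides with the subspace topology inherited from $Y(k_v)$.

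By the hypothesis, fix once and for all an open subset $M=\prod_{v\notin S}M_v\subset X(\A_k^S)$, with $M_v=X(k_v)$ for almost all $v$, such that the diagonal image of $x$ lies in $M$ and $M\cap X(k)=\{x\}$. Given an adelic point $(y_v)\in Y_x(\A_k)^{i_x^*(B)}$ together with an open neighborhood $\prod_{v\notin S}W_v$ of its $S$-projection in $Y_x(\A_k^S)$, I would build an open neighborhood $\tilde W=\prod_v\tilde W_v$ of $i_x((y_v))$ in $Y(\A_k)$ as follows: for $v\in S$ take $\tilde W_v=Y(k_v)$, and for $v\notin S$ choose $\tilde W_v$ to be an open neighborhood of $y_v$ in $Y(k_v)$ with $\tilde W_v\subset f^{-1}(M_v)$ and $\tilde W_v\cap Y_x(k_v)\subset W_v$. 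Such a $\tilde W_v$ exists because $f$ is continuous and $f(y_v)=x\in M_v$, and because the subspace topology on $Y_x(k_v)$ allows the given $W_v$ to be realised as $\tilde U_v\cap Y_x(k_v)$ for some open $\tilde U_v\subset Y(k_v)$.

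Functoriality of the Brauer--Manin pairing yields $i_x((y_v))\in Y(\A_k)^B$, so strong approximation for $Y$ off $S$ with respect to $B$ produces a point $y'\in Y(k)$ with $pr^S(y')\in\prod_{v\notin S}\tilde W_v$. At each $v\notin S$ one has $f(y')\in f(\tilde W_v)\subset M_v$, hence the image of $f(y')\in X(k)$ under the diagonal map into $X(\A_k^S)$ lies in $M$; by the choice of $M$ this forces $f(y')=x$, i.e.\ $y'\in Y_x(k)$. For each $v\notin S$ one then has $y'\in Y_x(k_v)\cap\tilde W_v\subset W_v$, which is the required approximation in $Y_x$.

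The only delicate step is the simultaneous choice of $\tilde W_v$ that both shrinks into $f^{-1}(M_v)$ (to enforce the fibre constraint) and cuts back to a subset of $W_v$ on $Y_x(k_v)$ (to enforce the approximation constraint); once the closed-immersion topological assertion is in hand this is routine, and the hypothesis on $\{x\}$ being isolated in the restricted topology is exactly what is needed to make the fibre constraint survive passage from $Y(k)$ to $Y_x(k)$.
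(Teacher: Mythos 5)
Your proof is correct and follows essentially the same route as the paper's: fix a neighbourhood $M$ of $x$ in $X(\A_k^S)$ with $M\cap X(k)=\{x\}$, shrink the prescribed adelic open so that it lies over $M$, transport the Brauer--Manin condition from $Y_x$ to $Y$ by functoriality, apply strong approximation on $Y$, and observe that the resulting $k$-point is forced into the fibre over $x$. The only difference is that you spell out the step the paper leaves implicit, namely that the closed immersion $i_x$ induces the subspace adelic topology on $Y_x$, so any open of $Y_x(\A_k^S)$ is cut out by an open of $Y(\A_k^S)$.
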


\begin{proof} Since $x$ is an isolated point of $X(k)$ inside $X(\A_{k}^{S})$, there is an open neighbourhood $M\subset X(\A_{k}^{S})$ of $x$ such that $X(k)\cap M=\{x\}$.
Let $N$ be an open subset of $Y(\A_{k}^{S})$ such that
$$ [(\prod_{v\in S} Y(k_v))\times N]\cap [ Y_x(\A_{k})^{i_x^*(B)}]\neq \emptyset .$$
Replacing $N$ by $N\cap f^{-1}(M)$ if necessary, the above property still holds. Therefore one can assume $N\subseteq f^{-1}(M)$.
Since $Y$ satisfies strong approximation with respect to $B$ off $S$, there exists $y\in Y(k)\cap N$. This implies that $x=f(y)$ and hence $y\in Y_{x}(k)\cap N$ as required.
\end{proof}

The following corollary is a variant of  \cite[Example 5.2]{CaoXuToric}.

\begin{cor}\label{keylem}
Let $X$ and $Y$ be positive dimensional varieties over a number field $k$ and $S$ be a finite subset of $\Omega_k$. 
 
Let $y\in Y(k)$ such that $\{y\}$ is an isolated point of $Y(k)$ inside $Y(\A_{k}^{S})$. For example, $Y(k)$ is discrete in $Y(\A_{k}^{S})$.

If $V=X\times Y\setminus \{(x,y)\}$ satisfies strong approximation with \BMo off $S$ for some $x\in X(k)$, then $X_{0}=X\setminus \{x\}$ satisfies strong approximation with respect to $\im(\Br(X)\to\Br(X_{0}))$ off $S$.
\end{cor}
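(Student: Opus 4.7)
The plan is to deduce the corollary from Proposition \ref{generalobservation} applied to the second projection $p_Y : V \to Y$ and the point $y\in Y(k)$, followed by a Brauer-group computation that identifies the resulting subgroup with $\im(\Br(X) \to \Br(X_0))$.

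First, the fibre of $p_Y$ over $y$ is
\[
p_Y^{-1}(y) \;=\; (X \times \{y\}) \setminus \{(x,y)\} \;\cong\; X \setminus \{x\} \;=\; X_0,
\]
since only the single point $(x,y)$ has been removed. By hypothesis, $\{y\}$ is open in $Y(k)$ in the restricted topology coming from $Y(\A_k^S)$. Since $V$ satisfies strong approximation with \BMo off $S$, Proposition \ref{generalobservation} applied with $B=\Br(V)$ yields that $X_0$ satisfies strong approximation with respect to $i_y^*(\Br(V))$ off $S$, where $i_y : X_0 \hookrightarrow V$ is the induced closed immersion.

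It then remains to show $i_y^*(\Br(V)) = \im(\Br(X) \to \Br(X_0))$. Writing $j : V \hookrightarrow X\times Y$ for the open immersion, $\iota_y : X \hookrightarrow X\times Y$, $x' \mapsto (x',y)$, for the closed immersion, and $q : X_0 \hookrightarrow X$ for the inclusion, one has the commutative square $j \circ i_y = \iota_y \circ q$, whence $i_y^* \circ j^* = q^* \circ \iota_y^*$ on Brauer groups. Since the first projection $p_X$ satisfies $p_X \circ \iota_y = \mathrm{id}_X$, the map $\iota_y^*: \Br(X\times Y) \to \Br(X)$ is split surjective. The complement $\{(x,y)\}$ has codimension $\dim X + \dim Y \geq 2$ in $X\times Y$, so by cohomological purity for the Brauer group (under the standing smoothness assumption at $(x,y)$, which is where the corollary is applied), $j^* : \Br(X\times Y) \to \Br(V)$ is an isomorphism. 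Combining,
\[
i_y^*(\Br(V)) \;=\; i_y^*\bigl(j^*(\Br(X\times Y))\bigr) \;=\; q^*\bigl(\iota_y^*(\Br(X\times Y))\bigr) \;=\; q^*(\Br(X)) \;=\; \im\bigl(\Br(X) \to \Br(X_0)\bigr),
\]
as needed. The main technical ingredient is the Brauer-group purity used to see that $j^*$ is surjective; everything else is a direct application of Proposition \ref{generalobservation} to the fibration $p_Y : V \to Y$ at the specified point $y$.
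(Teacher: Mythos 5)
Your proof is correct and follows the same route as the paper's: both apply Proposition \ref{generalobservation} to the projection $V\to Y$ at the point $y$, then identify $i_y^*(\Br(V))$ with $\im(\Br(X)\to\Br(X_0))$ via the same commutative square, using purity for the map $\Br(X\times Y)\to\Br(V)$ and surjectivity of $\Br(X\times Y)\to\Br(X)$. The only additions are that you spell out why the latter map is (split) surjective via the retraction $p_X$, and you flag the smoothness needed for purity, which the paper leaves implicit.
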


\begin{proof}
Consider the following commutative diagram
\SelectTips{eu}{12}$$\xymatrix@C=20pt @R=14pt{
\Br_{}(X\times Y)\ar[r]\ar[d]&\Br_{}(X)\ar[d]
\\  \Br(V)\ar[r]^{i_{y}^{*}}&\Br_{}(X_{0})
}$$ where $X_{0}\xrightarrow{i_{y}} V$ is induced by $y$.  Since the upper horizontal homomorphism of the diagram is surjective and the left vertical homomorphism of the diagram is an isomorphism by the purity theorem for Brauer groups, one concludes that $$i_{y}^{*}(\Br(V))=\im(\Br(X)\to\Br(X_{0})). $$  Restricting the projection $X\times_k Y\to Y$ to $V\to Y$, one obtains the result as desired by Proposition \ref{generalobservation}.
\end{proof}

As a special case of fibration, we study arithmetic purity concerning fibre product by applying the previous results. Arithmetic purity without \BMo is compatible with fibre product, which extends the arithmetic purity of ${\bf A}^n$.

\begin{prop} \label{purity-prod} Let $X$ and $Y$ be smooth and geometrically integral varieties over a number field $k$ and $S$ be a finite subset of $\Omega_k$. Suppose that $X(\A_k)\neq \emptyset$ and $Y(\A_k)\neq \emptyset$. Then $X\times_k Y$ satisfies the arithmetic purity off $S$ if and only if both $X$ and $Y$ satisfy the arithmetic purity off $S$. 
\end{prop}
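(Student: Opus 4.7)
The proof splits into two implications.

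\emph{Easy direction: arithmetic purity of $X\times_k Y$ implies that of $X$ and $Y$.} Given an open $U\subseteq X$ with $\codim(X\setminus U,X)\geq 2$, the product $U\times_k Y$ is open in $X\times_k Y$ with complement $(X\setminus U)\times_k Y$ of codimension $\geq 2$, so by hypothesis it satisfies strong approximation off $S$. To transfer this to $U$, I would fix any $(y_v)\in Y(\A_k)$ (nonempty by assumption) and, for an adelic point $(u_v)\in U(\A_k)$ together with an open neighborhood $M\subseteq U(\A_k^S)$ of $pr^S((u_v))$, apply strong approximation for $U\times_k Y$ to $((u_v),(y_v))$ and the open set $M\times Y(\A_k^S)$; the first coordinate of the resulting rational point lies in $U(k)\cap M$. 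The same argument with the roles exchanged handles $Y$.

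\emph{Harder direction: arithmetic purity of $X$ and $Y$ implies that of $X\times_k Y$.} Let $W\subseteq X\times_k Y$ be open with $\codim(Z,X\times_k Y)\geq 2$ where $Z=(X\times_k Y)\setminus W$, and let $p\colon X\times_k Y\to X$ be the projection, which is smooth, faithfully flat, with geometrically integral fibres isomorphic to $Y$. Applying Proposition~\ref{fiber-codim} to $W$ and $p$ yields an open dense $U_0\subseteq X$ such that for every closed point $x\in U_0$, $\codim(p^{-1}(x)\setminus W,\,p^{-1}(x))\geq 2$; in particular, for each $x\in U_0(k)$, the fibre $W_x$ is an open subset of $Y$ with complement of codimension $\geq 2$, and by the arithmetic purity of $Y$ it satisfies strong approximation off $S$.

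I would then invoke Proposition~\ref{fibration1} for the smooth surjective morphism $p|_{W\cap p^{-1}(U_0)}\colon W\cap p^{-1}(U_0)\to U_0$ (smoothness by restriction, surjectivity and geometric integrality of fibres because each $W_x$ for $x\in U_0$ is open dense in the geometrically integral $Y$), taking trivial Brauer subgroup $B=0$ and the proposition's $U$ to be $U_0$ itself. Condition~(2) of Proposition~\ref{fibration1} is the content of the previous paragraph, while Condition~(1) reduces, after projection, to the density of $U_0(k)$ in $U_0(\A_k^S)$. The arithmetic purity hypothesis for $X$, applied to $X$ itself, yields strong approximation off $S$ for $X$, so Proposition~\ref{ndiff} gives Zariski open strong approximation off $S$, whence $U_0(k)$ is dense in $pr^S(X(\A_k))$ and a fortiori in its open subset $U_0(\A_k^S)$. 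Thus Proposition~\ref{fibration1} gives strong approximation off $S$ for $W\cap p^{-1}(U_0)$.

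Finally, since $W$ is smooth (open in the smooth $X\times_k Y$) and $W\cap p^{-1}(U_0)$ is open and dense in $W$, the standard fact recalled in the paragraph preceding Proposition~\ref{ext} (strong approximation passes from an open dense subvariety to the smooth ambient variety via \cite[Lemma~3.2]{PR}) lets me conclude that $W$ satisfies strong approximation off $S$, completing the proof. The pivotal step is the verification of Condition~(1) in Proposition~\ref{fibration1}: it is here that arithmetic purity of $X$ (and not merely strong approximation of $X$) is used, since one must pass to the Zariski open dense $U_0$ produced by Proposition~\ref{fiber-codim}, and this is where Proposition~\ref{ndiff} is essential.
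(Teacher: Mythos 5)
Your ``easy'' direction is correct and amounts to the same argument the paper cites from \cite[Proposition~7.1(2)]{PR}, so I will focus on the other implication.

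The ``harder'' direction follows the right outline (fibre the product over $X$, apply Proposition~\ref{fiber-codim}, then a fibration lemma), but there is a genuine gap in the verification of Condition~(1) of Proposition~\ref{fibration1}. You take the base of the fibration to be the Zariski open dense subset $U_0$ of $X$ and take the proposition's $U$ to be $U_0$ itself; with $B=0$, Condition~(1) then asks that $U_0(k)$ be dense in $U_0(\A_k^S)$, i.e.\ that $U_0$ \emph{itself} satisfy strong approximation off $S$. Your justification --- that Zariski open strong approximation for $X$ gives density of $U_0(k)$ in $pr^S(X(\A_k))$, ``a fortiori in its open subset $U_0(\A_k^S)$'' --- breaks in two ways. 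First, $U_0(\A_k^S)$ is not an open subset of $X(\A_k^S)$: the adelic topology on $U_0$ is strictly finer than the one induced from $X$, because the complement $X\setminus U_0$ may have $\F_v$-points for infinitely many $v$ (already $\G_m(\A_k)$ is not open in $\A^1(\A_k)=\A_k$). Second, and more decisively, the conclusion is simply false: the $U_0$ produced by Proposition~\ref{fiber-codim} is only guaranteed to be open \emph{dense}, so its complement may have codimension~$1$, and such open subsets need not satisfy strong approximation. Over $k=\Q$ with $S=\infty_k$, the open dense $\G_m\subset\A^1$ fails strong approximation off $\infty$, even though $\A^1$ satisfies arithmetic purity. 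Density of $U_0(k)$ in $pr^S(X(\A_k))$ (the content of Zariski open strong approximation for $X$) is genuinely weaker than density of $U_0(k)$ in $U_0(\A_k^S)$.

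The repair --- which is what the paper's proof does, using \cite[Proposition~3.1]{CTXu13} in place of Proposition~\ref{fibration1} --- is to take the base of the fibration to be $p(W)$ rather than $U_0$. Since $p$ is faithfully flat and $\codim((X\times_k Y)\setminus W, X\times_k Y)\ge 2$, the image $p(W)$ is open in $X$ with $\codim(X\setminus p(W),X)\ge 2$, so arithmetic purity of $X$ gives strong approximation off $S$ for $p(W)$ directly, and Proposition~\ref{ndiff} (with $B=0$ finite) upgrades this to the Zariski open version; this is exactly Condition~(1) for the proposition's $U$ taken to be $U_0\cap p(W)$. You can then run the fibration lemma on $W\to p(W)$ itself (each fibre over a rational point of $U_0\cap p(W)$ is open in $Y$ with codimension~$\ge 2$ complement, by Proposition~\ref{fiber-codim}), which has the further advantage of concluding strong approximation for $W$ directly and dispensing with your final patching step via ``open dense subvariety implies ambient smooth variety.''
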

\begin{proof} Suppose both $X$ and $Y$ satisfy the arithmetic purity off $S$. Let $U$ be an open subset of $X\times_k Y$ with $$\codim((X\times_k Y)\setminus U, X\times_k Y) \geq 2$$ and $p: X\times_k Y \to X$ be the projection map. We apply \citep[Proposition 3.1]{CTXu13} for $U\to p(U)$ to show that $U$ satisfies strong approximation off $S$. Since $p$ is the base change of the structure morphism of $Y$, one has that $p$ is faithfully flat. Therefore $p(U)$ 
is an open subset of $X$ with $$\codim(X\setminus p(U), X)\geq 2 . $$  This implies that $p(U)$ satisfies strong approximation by our assumption and Condition (i) of \citep[Proposition 3.1]{CTXu13} holds. By Proposition \ref{fiber-codim}, there is an open dense subset $V$ of $p(U)$ such that $$\codim(p^{-1}(x)\setminus U, p^{-1}(x))\geq 2$$ for all $x\in V(k)$. Therefore Condition (ii) of \citep[Proposition 3.1]{CTXu13} holds by our assumption for $Y$. Since $U\cap p^{-1}(\xi)\neq \emptyset$ for any $\xi\in p(U)(k_v)$ and $p^{-1}(\xi)(k_v)\neq \emptyset$ with $v\in S$ by our assumption, one concludes that 
$$(U\cap p^{-1}(\xi))(k_v)\neq \emptyset$$ by \cite[Lemma 3.2]{PR}. This implies that $p(U)(k_v)=p(U(k_v))$ for all $v\in S$ and Condition (iii) of \citep[Proposition 3.1]{CTXu13} holds. 

Conversely, suppose $X\times_k Y$ satisfies the arithmetic purity off $S$. Let $U$ be an open subset of $X$ with $\codim(X\setminus U, X)\geq 2$. Then $U\times_k Y$ is an open subset of $X\times_k Y$ with $$\codim((X\times_k Y)\setminus (U\times_k Y), X\times_k Y)\geq 2 . $$ Therefore $U\times_k Y$ satisfies strong approximation off $S$ by our assumption. This implies that $U$ satisfies strong approximation off $S$ by \citep[Proposition 7.1 (2)]{PR}. One concludes that $X$ satisfies the arithmetic purity off $S$. By the same argument, one obtains that $Y$ satisfies the arithmetic purity off $S$ as well.  
\end{proof}

The following result gives a partial answer to Harari's question in \cite[Problem 6]{AIM14}.

\begin{cor} If $G$ is a simply connected linear algebraic group such that $G^{\ss}$ is quasi-split, then $G$ satisfies the arithmetic purity off any finite non-empty subset $S$ of $\Omega_k$. 
\end{cor}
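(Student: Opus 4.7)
The plan is to reduce this corollary to a direct application of Proposition~\ref{purity-prod} via the Levi decomposition. Interpreting ``simply connected linear algebraic group'' in the natural sense (so $G^{\tor}$ is trivial and $G^{\ss}$ is semi-simple simply connected), the hypothesis becomes that $G$ sits in an extension $1 \to G^{\u} \to G \to G^{\ss} \to 1$ with $G^{\ss}$ semi-simple simply connected and quasi-split. The goal is to exhibit $G$ as a product of two varieties each already known to satisfy the arithmetic purity off $S$.

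First I would invoke the Levi decomposition over the characteristic-zero field $k$: there exists a $k$-subgroup isomorphic to $G^{\ss}$ such that $G = G^{\ss} \ltimes G^{\u}$ as algebraic groups. The multiplication map gives a $k$-isomorphism of varieties
\[
G^{\ss} \times_k G^{\u} \xrightarrow{\ \cong\ } G, \qquad (g,u) \mapsto g\cdot u.
\]
Next I would note that in characteristic zero any smooth connected unipotent $k$-group of dimension $n$ is $k$-isomorphic as a variety to $\A^n_k$ (via the exponential map from its Lie algebra, which converges in characteristic zero and is defined over $k$). Hence there is a $k$-isomorphism of varieties $G \cong G^{\ss} \times_k \A^n_k$.

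With this product structure in hand, Proposition~\ref{purity-prod} applies directly. Both factors are smooth, geometrically integral, and possess $k$-rational points (e.g.\ the identity and the origin), so both have non-empty adelic points. The factor $G^{\ss}$ satisfies the arithmetic purity off $S$ by Theorem~\ref{sss-quasi-split}, and the affine space $\A^n_k$ satisfies the arithmetic purity off $S$ by \cite[Lemma~1.1]{Wei14} (equivalently \cite[Proposition~3.6]{CaoXuToric}). Proposition~\ref{purity-prod} then yields that $G \cong G^{\ss} \times_k \A^n_k$ satisfies the arithmetic purity off $S$.

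The only genuine ingredients beyond Theorem~\ref{sss-quasi-split} and Proposition~\ref{purity-prod} are the two variety-level isomorphisms above, both of which are standard characteristic-zero facts. Thus I do not anticipate any serious obstacle; the role of the corollary is simply to package these ingredients into a clean statement. If one wished to relax the hypothesis and allow a non-trivial torus quotient $G^{\tor}$, the product route would break down and one would instead need to run a fibration argument over a toric variety parallel to the proof of Theorem~\ref{sss-quasi-split}, but under the present hypothesis no such refinement is necessary.
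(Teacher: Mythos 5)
Your proof is correct and follows essentially the same route as the paper's: the paper also invokes the Levi decomposition (\cite[Theorem 2.3]{PR}) to split $G$ as a variety into $G^{\u} \times_k G^{\ss}$, uses that $G^{\u}$ is an affine space with arithmetic purity from \cite[Proposition 3.6]{CaoXuToric}, handles $G^{\ss}$ via Theorem~\ref{sss-quasi-split}, and concludes by Proposition~\ref{purity-prod}. The only difference is presentational: you spell out the exponential-map isomorphism $G^{\u} \cong \A^n_k$, which the paper leaves implicit.
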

\begin{proof} It follows from \cite[Theorem 2.3]{PR}, \cite[Proposition 3.6]{CaoXuToric}, Theorem \ref{sss-quasi-split} and Proposition \ref{purity-prod}. 
\end{proof}

One can further extend Proposition \ref{purity-prod} to the cases of arithmetic purity with \BM obstruction as follows.

\begin{prop}\label{purity-homotopy} Let $X$ and $Y$ be smooth quasi-projective geometrically integral varieties over a number field $k$, $S$ be a finite subset of $\Omega_k$, $c$ be an integer bigger than 1 and $B$ be a finite subgroup of $\Br(X)$. Assume that $Y$ satisfies the arithmetic purity off $S$ of codimension $c$ with $Y(\A_k)\neq \emptyset$. Then $X$ satisfies the arithmetic purity with respect to $B$ off $S$ of codimension $c$ if and only if $X\times_k Y$ satisfies the arithmetic purity with respect to $p^*(B)$ off $S$ of codimension $c$, where $p: X\times_k Y \to X$ is the projection map.
\end{prop}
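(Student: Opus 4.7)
The plan is to prove both implications separately, each by reducing to results established earlier in the excerpt.

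\emph{Reverse direction.} Given an open $U\subseteq X$ with $\codim(X\setminus U, X) \geq c$, the open subset $U\times_k Y \subseteq X\times_k Y$ has complement $(X\setminus U)\times_k Y$, which is still of codimension $\geq c$; by hypothesis it satisfies strong approximation with respect to $p^*(B)|_{U\times_k Y}$ off $S$. For any $(x_v)\in U(\A_k)^{B|_U}$, I would choose any $(y_v)\in Y(\A_k)$ (non-empty by assumption) and observe that $(x_v, y_v)\in (U\times_k Y)(\A_k)^{p^*(B)}$ by functoriality of the Brauer--Manin pairing, since $p^*(b)(x_v, y_v)=b(x_v)$ for $b\in B$. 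Approximating $(x_v, y_v)$ off $S$ by a rational point of $U\times_k Y$ and projecting onto $U$ produces the required approximation of $(x_v)$ by $U(k)$.

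\emph{Forward direction.} Given $W\subseteq X\times_k Y$ open with $\codim(X\times_k Y\setminus W, X\times_k Y)\geq c$, the plan is to apply Corollary \ref{cofib1} to the restricted first projection $f:=p|_W\colon W\to p(W)$. First I would check that $p(W)$ is open in $X$ (since $p$ is faithfully flat, being a base change of $Y\to\Spec k$) and that $\codim(X\setminus p(W), X)\geq c$; the key observation is the inclusion $(X\setminus p(W))\times_k Y \subseteq X\times_k Y\setminus W$, which forces $\dim(X\setminus p(W)) + \dim Y \leq \dim X + \dim Y - c$. The arithmetic purity hypothesis on $X$ then gives strong approximation for $p(W)$ with respect to $B|_{p(W)}$ off $S$, and Proposition \ref{ndiff} upgrades this to Zariski open strong approximation since $B$ is finite, verifying hypothesis (1) of Corollary \ref{cofib1}.

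For hypothesis (2), I would invoke Proposition \ref{fiber-codim} applied to $p\colon X\times_k Y\to X$ with $V=W$ to produce an open dense $U\subseteq X$ such that $\codim(p^{-1}(x)\setminus W, p^{-1}(x))\geq c$ for each closed point $x\in U$; then for $x\in (U\cap p(W))(k)$, the fibre $W_x \subseteq p^{-1}(x)\cong Y$ is open with complement of codimension $\geq c$, hence satisfies strong approximation off $S$ by the arithmetic purity hypothesis on $Y$. For hypothesis (3), at any $v\in S$ and $x\in p(W)(k_v)$, the fibre $W_x\subseteq Y_{k_v}$ is a non-empty open subvariety of the geometrically integral variety $Y_{k_v}$, hence dense, so $W_x(k_v)$ is dense in $Y(k_v)\neq\emptyset$ by \cite[Lemma 3.2]{PR}. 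Corollary \ref{cofib1} then implies that $W$ satisfies Zariski open strong approximation with respect to $f^*(B|_{p(W)}) = p^*(B)|_W$ off $S$, which in particular gives strong approximation and completes the argument.

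The main technical obstacle is the codimension bookkeeping in the forward direction: one must verify both $\codim(X\setminus p(W), X)\geq c$ in the base and generic-fibre-codimension $\geq c$ in $Y$. The former is handled by the observation that each missing base point in $X\setminus p(W)$ contributes an entire $Y$-fibre of dimension $\dim Y$ to $X\times_k Y\setminus W$, while the latter is precisely the content of Proposition \ref{fiber-codim}.
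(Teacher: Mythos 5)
Your proof is correct and follows essentially the same route as the paper's: the reverse direction is the pull-back along $p$ and functoriality of the Brauer--Manin pairing (the paper phrases it via the section $i_{y_0}$, you do it directly with any $(y_v)\in Y(\A_k)$, which is equivalent), and the forward direction is the same fibration argument over $p(W)$, invoking faithful flatness for the base codimension, Proposition~\ref{fiber-codim} for the generic fibre codimension, and Proposition~\ref{ndiff} to upgrade to Zariski open strong approximation on $p(W)$. The only cosmetic difference is that you package the final step through Corollary~\ref{cofib1} whereas the paper applies Proposition~\ref{fibration1} directly; since the former is derived from the latter this is the same argument.
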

\begin{proof}
Suppose $X$ satisfies the arithmetic purity with respect to $B$ off $S$ of codimension $c$. Let $U$ be an open subset of $X\times_k Y$ with $$\codim((X\times_k Y)\setminus U, X\times_k Y) \geq c. $$  Since $p$ is faithfully flat, one obtains that $p(U)$ 
is an open subset of $X$ with $\codim(X\setminus p(U), X)\geq c$.  This implies that $p(U)$ satisfies strong approximation with respect to $B$ off $S$ by our assumption about $X$. Since $Y(\A_k)\neq \emptyset$, one has $p(U)(k_v)=p(U(k_v))$ for all $v\in S$ by \cite[Lemma 3.2]{PR}. By Proposition \ref{fiber-codim}, there is an open dense subset $V$ of $p(U)$ such that $$\codim(p^{-1}(x)\setminus U, p^{-1}(x))\geq c$$ for all $x\in V(k)$. Since $B$ is finite, one obtains that $V(k)$ is dense in $pr^{S}(p(U)(\A_k)^B)$ by Proposition \ref{ndiff}. By Proposition \ref{fibration1} and the assumption about $Y$, one concludes that $U$ satisfies strong approximation with respect to $p^*(B)$ off $S$ as desired. 
 
Conversely, suppose $X\times_k Y$ satisfies the arithmetic purity with respect to $p^*(B)$ off $S$ of codimension $c$. Let $U$ be an open subset of $X$ with $$\codim(X\setminus U, X)\geq c \ \ \ \text{and} \ \ \  M = (\prod_{v\in S} U(k_v) )\times (\prod_{v\not\in S} M_v )$$  be an open subset of $U(\A_k)$ with $M\cap U(\A_k)^{B}\neq \emptyset$. Since $Y(k)\neq \emptyset$, one obtains a section $i_{y_0}: U \to U\times_k Y$ of $p$ by fixing $y_0\in Y(k)$. Then
$$ (M\times Y(\A_k))\cap (U(\A_k)\times Y(\A_k))^{p^*(B)}\neq \emptyset $$  
by functoriality of \BM pairing and $i_{y_0}^{*}\circ p^*=id$. Since $U\times_k Y$ is an open subset of $X\times_k Y$ with $$\codim((X\times_k Y)\setminus (U\times_k Y), X\times_k Y)\geq c,$$ one gets that $U\times_k Y$ satisfies strong approximation with respect to $p^*(B)$ off $S$ by our assumption. This implies that $$(U\times_k Y)(k) \cap (M\times Y(\A_k)) \neq \emptyset. $$ In particular, one has $U(k)\cap M\neq \emptyset$ as desired.
\end{proof}

\begin{cor} \label{purity-prod-zo} Let $X$ and $Y$ be smooth quasi-projective geometrically integral varieties over a number field $k$, $S$ be a finite subset of $\Omega_k$ and $B$ be a subgroup of $\Br(X)$. If $X$ satisfies Zariski open strong approximation with respect to $B$ off $S$ and $Y$ satisfies the arithmetic purity off $S$ of codimension $\dim(X)+1$, then $X\times_k Y$ satisfies the arithmetic purity with respect to $p^*(B)$ off $S$ of codimension $\dim(X)+1$, where $p: X\times_k Y \to X$ is the projection map.
\end{cor}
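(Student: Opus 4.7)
The plan is to apply the fibration result Proposition \ref{fibration1} to the restriction of the projection $p$ to the open set in question. Let $W$ be an open subset of $X\times_k Y$ with $\codim(X\times_k Y\setminus W,\, X\times_k Y)\geq \dim(X)+1$; I want to show that $W$ satisfies strong approximation with respect to $p^{*}(B)$ off $S$.

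First, I would verify that $p|_W\colon W\to X$ is smooth surjective with geometrically integral fibres, so that Proposition \ref{fibration1} is applicable. Smoothness is inherited from $p$ through the open immersion $W\hookrightarrow X\times_k Y$. For surjectivity, the crucial point is that the codimension hypothesis is strictly larger than $\dim(X)$: if some $x\in X$ were not in $p(W)$, then the entire geometric fibre $p^{-1}(x)$, of dimension $\dim(Y)$, would be contained in $X\times_k Y\setminus W$, producing a closed subset of codimension at most $\dim(X)$, contradicting the hypothesis. Hence $p(W)=X$, and each fibre of $p|_W$ is a non-empty open subscheme of $p^{-1}(x)\cong Y_{k(x)}$, which is geometrically integral since $Y$ is.

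Next, I would apply Proposition \ref{fiber-codim} to $p\colon X\times_k Y\to X$ with the open subset $W$ and the integer $c=\dim(X)+1$. This yields a Zariski open dense subset $U_0\subseteq X$ such that for every closed point $x\in U_0$ the complement $p^{-1}(x)\setminus W$ has codimension $\geq \dim(X)+1$ in $p^{-1}(x)\cong Y$. By the arithmetic purity of $Y$ off $S$ of codimension $\dim(X)+1$, the fibre $W_x:=(p|_W)^{-1}(x)$ then satisfies strong approximation off $S$ for every $x\in U_0(k)$, which verifies hypothesis~(2) of Proposition~\ref{fibration1}.

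For hypothesis (1) of Proposition \ref{fibration1}, I would observe that since $X$ satisfies Zariski open strong approximation with respect to $B$ off $S$, the set $U_0(k)$ is dense in $pr^{S}(X(\A_k)^{B})$. The set $pr^{S}\bigl[(\prod_{v\in S} p(W(k_v))\times X(\A_k^{S}))^{B}\bigr]$ is tautologically contained in $pr^{S}(X(\A_k)^{B})$, so $U_0(k)$ remains dense in this smaller subset, giving~(1). Proposition \ref{fibration1} applied to $p|_W\colon W\to X$ and the subgroup $B\subseteq\Br(X)$ then delivers strong approximation of $W$ with respect to $(p|_W)^{*}(B)$, which is the restriction of $p^{*}(B)$ to $W$, off $S$, as required. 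The step I expect to be most delicate is the surjectivity of $p|_W$ with geometrically integral fibres; this is exactly where the assumption $c\geq \dim(X)+1$ (rather than merely $c\geq 2$ as in Proposition \ref{purity-homotopy}) becomes essential, and it explains why the bound on codimension cannot be weakened by this argument.
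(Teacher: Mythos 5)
Your proof is correct and takes essentially the same approach as the paper: the paper presents its argument as a modification of the proof of Proposition~\ref{purity-homotopy}, observing that the codimension bound $\dim(X)+1$ forces $p(U)=X$ (so that the Zariski open strong approximation hypothesis on $X$ replaces the finiteness of $B$ and the appeal to Proposition~\ref{ndiff}), and then invokes Proposition~\ref{fibration1} exactly as you do. Your write-out of the surjectivity step via the dimension count and the verification of conditions (1) and (2) of Proposition~\ref{fibration1} spell out precisely what the paper leaves implicit.
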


\begin{proof} Without loss of generality, one can assume that $Y(\A_k) \neq \emptyset$. If $B$ is finite, this is a special case of Proposition \ref{purity-homotopy}. The finiteness of $B$ in the proof of Proposition \ref{purity-homotopy} is only used for the equivalence between strong approximation with respect to $B$ off $S$ with Zariski open strong approximation with respect to $B$ off $S$ by Proposition \ref{ndiff}. For this special case, in the proof of Proposition \ref{purity-homotopy}, one has $p(U)=X$ which satisfies Zariski open strong approximation with respect to $B$ off $S$ by our assumption.   
\end{proof}

Since $X_k=\G_{m}\times_k\G_a\setminus \{(1,0)\}$ does not satisfy strong approximation with respect to $\Br_1(X_k)$ off $\infty_{k}$ for $k=\Q$ or an imaginary quadratic field by \cite[Example 5.2]{CaoXuToric}, the assumption of Zariski open strong approximation with respect to $B$ off $S$ about $X$ in Corollary \ref{purity-prod-zo} is not redundant.

The following lemma is well known. We provide the proof for completeness. 

\begin{lem}\label{open-surj} Let $f:Y\to X$ be a faithfully flat morphism of relative dimension $d$ between geometrically integral varieties over a field $k$.
If $V$ is an open subset of $Y$ such that the dimension of any irreducible component of $Y\setminus V$ is less than d, then the restriction map $f|_{V}: V\to X$ is also faithfully flat.
\end{lem}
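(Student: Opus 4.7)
The plan is to verify the two requirements of faithful flatness separately. Flatness of $f|_{V}$ is immediate, because flatness is local on the source and $V$ is open in $Y$; so the nontrivial content is surjectivity of $f|_V$.

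To prove surjectivity, I would argue by contradiction. Since $f$ is flat and of finite type between Noetherian schemes, $f$ is an open morphism, so $f(V)$ is open in $X$. If $f(V) \neq X$, then $W := X\setminus f(V)$ is a non-empty closed subset of $X$. Pick any point $x \in W$; then the entire fiber $f^{-1}(x)$ lies inside $Z := Y\setminus V$.

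The key input is the equidimensionality of the fibers of $f$. Because $f$ is a flat morphism of relative dimension $d$ between integral varieties, the dimension formula for flat morphisms (namely $\dim_{y} Y = \dim_{f(y)} X + \dim_{y} f^{-1}(f(y))$, together with $\dim Y = \dim X + d$) shows that every irreducible component of $f^{-1}(x)$ has dimension exactly $d$. On the other hand, by hypothesis $Z = \bigcup_{i} Z_{i}$ with each irreducible component $Z_{i}$ of dimension strictly less than $d$, so any closed subset of $Z$ has dimension $< d$. Consequently, the inclusion $f^{-1}(x)\subseteq Z$ forces each irreducible component of $f^{-1}(x)$ to be contained in some $Z_{i}$, hence to have dimension $< d$, contradicting the previous paragraph.

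The only mildly delicate step is invoking the equidimensionality of fibers; this is a standard consequence of flatness together with the hypothesis that the relative dimension is $d$ and that source and target are integral varieties, and there is nothing deeper to overcome. Once this is in hand the contradiction is automatic, and we conclude $f(V) = X$, so $f|_V$ is surjective and hence faithfully flat.
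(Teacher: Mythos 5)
Your proof is correct and follows essentially the same route as the paper: flatness is automatic for the restriction to an open subset, and surjectivity follows because the hypotheses on relative dimension force every fiber of $f$ to be equidimensional of dimension $d$, so no fiber can be wholly contained in $Y\setminus V$, whose components all have dimension strictly less than $d$. The paper phrases this directly (each geometric fiber meets $V$) rather than by contradiction, and omits your side remark about openness of $f$, but the key dimension count is identical.
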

\begin{proof} For any geometrical point $x\in X(\bar{k})$,  the dimension of any irreducible component of the fibre $Y_x$ over $x$ is equal to $d$ by the faithful flatness of $f$. Since
the dimension of any irreducible component of $Y\setminus V$ is less than $d$, one concludes that
$$ Y_{x} \cap V_{\bar k} = Y_{x} \setminus (Y_{\bar k} \setminus V_{\bar k}) \neq \emptyset $$ as required.  \end{proof}

Without assuming Zariski open strong approximation, one has the following result.

\begin{prop} \label{purity-no-zo}Let $X$ and $Y$ be smooth quasi-projective geometrically integral varieties over a number field $k$, $S$ be a finite subset of $\Omega_k$, $c$ be an integer bigger than 1 and $B$ be a subgroup of $\Br(X)$. If $X$ satisfies strong approximation with respect to $B$ off $S$ and $Y$ satisfies the arithmetic purity off $S$ of codimension $c$, then $X\times_k Y$ satisfies the arithmetic purity with respect to $p^*(B)$ off $S$ of codimension $c+\dim(X)$, where $p: X\times_k Y \to X$ is the projection map.
\end{prop}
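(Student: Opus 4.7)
The plan is to exploit the strengthened codimension hypothesis to find an open dense subset of $W$ of the product form $X\times_k V$, prove strong approximation for this product by the fibration method (Proposition~\ref{fibration1}), and then transfer it to the ambient $W$ by a Brauer\textendash Manin preserving adelic modification. More precisely, write $W=X\times_k Y\setminus Z$ where $Z$ is the given closed complement; the codimension hypothesis gives $\dim Z\leq \dim Y - c$, so the image of $Z$ under the second projection $q:X\times_k Y\to Y$ satisfies $\dim\overline{q(Z)}\leq\dim Y - c<\dim Y$. Setting $V := Y\setminus\overline{q(Z)}$, one obtains an open dense subset of $Y$ with $\codim(Y\setminus V,Y)\geq c$; for any $y\in V$, the fibre $X\times\{y\}$ is disjoint from $Z$, hence $X\times_k V\subseteq W$. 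The arithmetic purity hypothesis on $Y$ then ensures that $V$ satisfies strong approximation off $S$.

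Next I would apply Proposition~\ref{fibration1} to the first projection $p_1:X\times_k V\to X$, a smooth surjective morphism with geometrically integral constant fibre $V$. Taking the Zariski open dense subset in Proposition~\ref{fibration1} to be $X$ itself, condition~(2) becomes precisely the strong approximation for $V$ just established, and condition~(1) reduces to the density of $X(k)$ in a subset of $pr^S(X(\A_k)^B)$, which follows from the hypothesis that $X$ satisfies strong approximation with respect to $B$ off $S$. This yields strong approximation for $X\times_k V$ with respect to $p_1^*(B)=p^*(B)|_{X\times_k V}$ off $S$.

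For the transfer, given a basic adelic open $N\subseteq W(\A_k)$ and a point $w\in N\cap W(\A_k)^{p^*(B)}$, write $w_v=(x_v,y_v)$ at each place. Using the density of $V(k_v)$ in $Y(k_v)$ coming from smoothness of $Y$, I would replace each $y_v$ by some $y'_v\in V(k_v)$ close enough to keep $(x_v,y'_v)$ in the local factor of $N$; at the cofinitely many places $v$ where $N_v=\mathcal{W}(\mathcal{O}_v)$, I would fix a common integral model $\mathcal{V}\subseteq\mathcal{Y}$ of $V$ satisfying $\mathcal{X}\times\mathcal{V}\subseteq\mathcal{W}$ over a large ring $\mathcal{O}_{P_0}$ and pick $y'_v\in\mathcal{V}(\mathcal{O}_v)$, nonempty for almost all $v$ by Lang\textendash Weil together with Hensel. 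The new adelic point $w'=(x_v,y'_v)$ lies in $(X\times_k V)(\A_k)\cap N$; since the $X$-coordinates are preserved at every place, the evaluation of every element of $p^*(B)$ is preserved exactly, so $w'$ still satisfies the Brauer\textendash Manin condition for $p_1^*(B)$. Strong approximation for $X\times_k V$ off $S$ then produces a rational point in $(X\times_k V)(k)\cap N\subseteq W(k)\cap N$, completing the proof. The main obstacle is the integral bookkeeping in this last step: the models must be chosen compatibly so that the Brauer\textendash Manin preserving modification simultaneously stays integral at almost all places, and the rigidity afforded by the exact identity of $p$-coordinates is what allows $B$ to be infinite despite the usual difficulty that Brauer evaluations are only locally constant under perturbation.
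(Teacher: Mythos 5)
Your proof is correct, but it takes a genuinely different route from the paper's. The paper projects the open set $U\subseteq X\times_k Y$ directly to $X$ via $p$: it shows $p(N)\cap X(\A_k)^B\neq\emptyset$ (using Lemma~\ref{open-surj} to get surjectivity of $p_U$ and hence openness of $p(N)$), finds a rational point $x\in X(k)\cap p(N)$ by strong approximation for $X$, and then observes that $\codim(p^{-1}(x)\setminus U,\,p^{-1}(x))\geq \dim(Y)-\dim((X\times_k Y)\setminus U)\geq c$, so the single fibre $p_U^{-1}(x)$ satisfies strong approximation off $S$ by the arithmetic purity of $Y$. You instead push the \emph{complement} $Z$ down to $Y$ via the second projection to carve out a product $X\times_k V\subseteq U$ with $\codim(Y\setminus V, Y)\geq c$, prove strong approximation for this product via Proposition~\ref{fibration1}, and then transfer the original adelic point into $X\times_k V$ by a coordinatewise modification; the transfer is legitimate precisely because, for $b\in B\subseteq\Br(X)$, the evaluation $(p^*b)(x_v,y_v)=b(x_v)$ depends only on the $X$-coordinate, so the Brauer--Manin condition is preserved exactly rather than merely approximately. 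Both arguments exploit the strengthened codimension hypothesis, just applied to different projections of $Z$. The paper's route is more economical: it needs no transfer step, no auxiliary integral model for $V$, and no invariance observation, because it works on a single fibre over a rational point of $X$. Your route is slightly heavier on bookkeeping but packages the product structure more explicitly and isolates a clean invariance principle (Brauer evaluation is rigid under perturbation of the $Y$-coordinate) that could be reused elsewhere.
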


\begin{proof} Without loss of generality, we can assume that $Y(\A_k)\neq \emptyset$. Let $U$ be an open subset of $X\times_k Y$ with $$\codim((X\times_k Y)\setminus U, X\times_k Y)\geq c+\dim (X) . $$ Consider an open subset of $U(\A_k)$  $$ N=\prod_{v\in S} U(k_v) \times \prod_{v\not\in S} N_v  \ \ \ \text{with} \ \ \  N\cap U(\A_k)^{p^*(B)}\neq \emptyset . $$ 
Since $p_U: U\to X$ is surjective by Lemma \ref{open-surj}, one has $p(N)$ is an open subset of $X(\A_k)$ by (\ref{sur-int}) in Proposition \ref{fibration1}. 
The functoriality of Brauer\textendash Manin pairing implies that $p(N)\cap X(\A_k)^B \neq \emptyset $. By \cite[Lemma 3.2]{PR} and $Y(k_v)\neq \emptyset$ for all $v\in S$, one obtains that $p(U(k_v))=X(k_v)$ for all $v\in S$. There is $x\in X(k)\cap p(N)$ by assumption on $X$.  

Since $$ \codim (p^{-1}(x)\setminus U, p^{-1}(x))\geq \dim (Y) -\dim((X\times_k Y)\setminus U)\geq c , $$ one concludes that $p_U^{-1}(x)$ satisfies strong approximation off $S$ by assumption on $Y$. Since $N\cap p_U^{-1}(x)(\A_k) \neq \emptyset$, there is  $$z\in [N\cap (p_U^{-1}(x)(k))]\subseteq N\cap U(k)$$ as desired. \end{proof}

\section{Zariski open strong approximation with \BMo for a connected linear algebraic group} \label{Zosa}

The main results of this section are two folds. The first part is Theorem \ref{iso} which provides the most general descent relation with \BMo for connected linear algebraic groups. 
The second part is Proposition \ref{gclag-zosabm}, Theorem \ref{zosat} and Corollary \ref{Gm}, 
which give a complete answer to Zariski open strong approximation with \BMo for a connected linear algebraic group. 
These results will be used to establish arithmetic purity for connected groups in Section \ref{conngroups} and for partial equivariant smooth compactifications of homogeneous spaces in Section \ref{psc-homog}.

We first improve \cite[Proposition 3.3]{CDX} as follows. 
\begin{thm}\label{iso} If $H\xrightarrow {\phi} G$ is a surjective homomorphism of connected linear algebraic groups over a number field $k$, then  
$$ G(\A_k)^{\Br_1(G)} = \phi(H(\A_k)^{\Br_1 (H)})  \cdot G(k) . $$
\end{thm}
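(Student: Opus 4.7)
The inclusion $\phi(H(\A_k)^{\Br_1(H)})\cdot G(k)\subseteq G(\A_k)^{\Br_1(G)}$ is immediate: functoriality of the \BM pairing applied to the pullback $\phi^*\colon\Br_1(G)\to\Br_1(H)$ gives $\phi(H(\A_k)^{\Br_1(H)})\subseteq G(\A_k)^{\Br_1(G)}$, while $G(k)\subseteq G(\A_k)^{\Br_1(G)}$ follows from class field theory.

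For the reverse, substantive inclusion, the plan is to reduce to \cite[Proposition 3.3]{CDX} by replacing $H$ (and, if necessary, $G$) with a z-extension. Concretely, I would first choose a z-extension $1\to Z\to\tilde H\to H\to 1$ in which $Z$ is a quasi-trivial central torus, $\tilde H^{\tor}$ is quasi-trivial, and $\tilde H^{\ssu}$ is simply connected; such an $\tilde H$ falls into the class of groups treated by \cite[Proposition 3.3]{CDX}. Because $Z$ satisfies $\Br_1(Z)=\Br_0(Z)$ and strong approximation off any non-empty finite set of places, Sansuc's exact sequence applied to $1\to Z\to\tilde H\to H\to 1$ should give that the natural map
\[ \tilde H(\A_k)^{\Br_1(\tilde H)}\cdot H(k) \to H(\A_k)^{\Br_1(H)} \]
is surjective. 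It therefore suffices to prove the theorem for the composition $\tilde\phi:=\phi\circ(\tilde H\to H)\colon\tilde H\to G$; and, if needed, a symmetric z-extension of $G$ can be used to fit $\tilde\phi$ into a diagram $\tilde H\to\tilde G$ to which \cite[Proposition 3.3]{CDX} applies cleanly.

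The hard part will be controlling $\ker(\tilde\phi)$, which sits in an extension $1\to Z\to\ker(\tilde\phi)\to\ker(\phi)\to 1$ where $\ker(\phi)$ may be non-connected or even non-smooth in general. One needs to verify that, after the z-extension reductions, the residual obstruction to lifting $(g_v)\in G(\A_k)^{\Br_1(G)}$ to $H(\A_k)^{\Br_1(H)}$ modulo $G(k)$ is exactly captured by the $\Br_1(G)$-pairing. The mechanism I would use is a Poitou\textendash Tate-type duality between classes in $H^1(\A_k,\ker(\tilde\phi))$ and the cokernel of $\phi^*\colon\Br_1(G)/\Br_0(G)\to\Br_1(H)/\Br_0(H)$; once this duality is in place, the equality should follow by a direct diagram chase that tracks how an adelic point of $G$ orthogonal to $\Br_1(G)$ lifts, up to an element of $G(k)$, to an adelic point of $H$ orthogonal to $\Br_1(H)$.
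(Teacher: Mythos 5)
Your easy inclusion is fine, but the substantive direction is left as an acknowledged sketch: you say you would use a Poitou--Tate-type duality between $H^1(\A_k,\ker(\tilde\phi))$ and $\coker(\phi^*)$ and that "once this duality is in place, the equality should follow," and you also say the residual obstruction "is exactly captured by the $\Br_1(G)$-pairing" --- but that is precisely the statement to be proved, so as written the plan is circular in spirit and unfinished in fact. You also write that $\ker(\phi)$ "may be non-connected or even non-smooth"; over a number field (characteristic $0$) it is automatically smooth, so only non-connectedness is an issue. Finally, z-extending $H$ \emph{enlarges} $\ker(\tilde\phi)$ rather than simplifying it, so it is not clear that this reduction makes the problem easier.

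The paper's proof goes in essentially the opposite direction: it first disposes of the case $J:=\ker\phi$ connected by citing known results, then \emph{quotients} $H$ by $J^0$ to reduce to the case where $J$ is finite (hence of multiplicative type, since $H$ is connected and acts trivially on $J$ by conjugation). For finite $J$, it takes a flasque resolution $1\to J\to T_2\to T_3\to 1$ with $T_2$ quasi-trivial and $T_3$ coflasque, forms the pushout $E_2=H\times_k^J T_2$, and exploits that coflasqueness of $T_3$ forces $H^3(k,T_3^*)=0$, which yields surjectivity of $\Br_1(E_2)\to\Br_1(\chi_0^{-1}(t))$ on fibres; a concrete chase through the resulting exact $3\times 3$ diagram then gives the decomposition. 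The torus-with-finite-kernel case (Step 1) is where the Harari/Milne duality enters explicitly, via a specific commutative diagram with exact rows and columns, not via an abstract "Poitou--Tate-type duality" left to be specified. To turn your proposal into a proof, you would at minimum need to (i) state and prove the precise duality you invoke, (ii) carry out the diagram chase, and (iii) explain why, after z-extending $H$, the resulting map $\tilde H\to G$ actually falls within the reach of \cite[Proposition 3.3]{CDX} given that its kernel is now larger, not smaller.
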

\begin{proof}  We prove this result in three steps. Let $J=\ker(\phi)$. 

Step 0. The result is true when $J$ is connected by \citep[Lemma 3.6, Corollary 3.11 (1) and Corollary 5.13]{Cao-homog}.

Step 1. We prove the result when both $H$ and $G$ are tori and $J$ is finite. By \cite[Theorem 2]{Harari08} and \cite[Theorem 4.10 in Chapter I]{MilneADT}, one has the following commutative diagram of topological groups, with exact columns and rows 
\SelectTips{eu}{12}$$\xymatrix@C=20pt @R=24pt{
& J(\A_k) \ar[r]\ar[d] & H^2(k, J^*)^D  \ar[r] \ar[d]  & \sha^1(k, J) \\
\overline{ H(k)\cdot H(k_\infty)^0} \ar[r]\ar[d] & H(\A_k) \ar[r]\ar[d]^{\phi} & H^2(k, H^*)^D\ar[d] \\
\overline{G(k)\cdot G(k_\infty)^0}  \ar[r] & G(\A_k) \ar[r] \ar[d] & H^2(k, G^*)^D \\
 & \bigoplus_{v\in \Omega_k} H^1(k_v, J)  }$$
 where $H(k_\infty)^0$ and $G(k_\infty)^0$ are the connected components of the Lie groups $H(k_\infty)$ and $G(k_\infty)$ respectively. The result follows from the same 
arguments as those in the proof of \cite[Proposition 3.3]{CDX} with $$\Br_a(H)=H^2(k, H^*) \ \ \ \text{and} \ \ \ \Br_a(G)=H^2(k, G^*) . $$

Step 2.  We prove the result when both $H$ and $G$ are connected and $J$ is finite. 
Since the action of $H$ by conjugation on $J$ is trivial by connectedness of $H$, one obtains that $J$ is a group of multiplicative type. 
By \cite[Proposition 1.3]{CTS87}, we have a short exact sequence  $$ 1\rightarrow J \xrightarrow{\delta} T_2 \xrightarrow{\chi} T_3 \rightarrow 1$$  where 
$T_2$ is a quasi-trivial torus and $T_3$ is a coflasque torus over $k$. 
Let $E_2=H\times_k^{J} T_2$. Then $E_2$ is a connected linear algebraic group such that the following diagram commutes with exact columns and rows  
\SelectTips{eu}{12} \begin{equation}\label{diagram} 
\xymatrix@C=20pt @R=24pt{
& 1 \ar[d] & 1 \ar[d] \\
1\ar[r] & J \ar[r]^{\delta}\ar[d] & T_2 \ar[r]^{\chi}\ar[d]^{\psi_0} & T_3 \ar[d]^{id} \ar[r]&  1 \\
1\ar[r] & H  \ar[r]^{\delta_0}\ar[d]^{\phi} & E_2 \ar[r]^{\chi_0}\ar[d]^{\phi_0} & T_3 \ar[r] &1 \\
& G  \ar[r]^{id}\ar[d]  & G \ar[d]  \\
& 1 & 1 } 
\end{equation} 
where $\delta_0$ and $\psi_0$ are induced by 
$$H\rightarrow H\times_k T_2; \ \ \ g\mapsto (g,1) \ \ \ \text{and} \ \ \ T_2 \rightarrow H\times_k T_2; \ \ \ t\mapsto (1,t) $$
respectively and $\phi_0$ and $\chi_0$ are induced by $\phi$ and $\chi$ with projections respectively. Since $T_3$ is coflasque, one has
$$H^3(k, T_3^*) \cong \prod_{v\in\infty_k} H^3(k_v, T_3^*) \cong \prod_{v\in \infty_k} H^1(k_v, T_3^*) =0 $$ by \citep[Lemma 5.4]{Cao-homog}. 
For any $t\in T_3(k)$, the restriction map
\begin{equation}\label{br}
\Br_1(E_2) \rightarrow \Br_1(\chi_0^{-1}(t)) 
\end{equation} is surjective by \citep[Lemma 5.5]{Cao-homog} and \citep[Lemma 2.7]{CDX}. 

We claim that  \begin{equation}\label{equ} 
 E_2(\A_k)^{\Br_1(E_2)} = \delta_0(H(\A_k)^{\Br_1(H)}) \cdot \psi_0(T_2(\A_k)^{\Br_1(T_2)})\cdot E_2(k) .
\end{equation} 
Indeed, for any $(x_v)\in E_2(\A_k)^{\Br_1(E_2)}$,  one has 
$$ \chi_0((x_v)) \in T_3(\A_k)^{\Br_1(T_3)} = \chi(T_2(\A_k)^{\Br_1(T_2)}) \cdot T_3(k) $$ by Step 1.  There are $(\alpha_v) \in T_2(\A_k)^{\Br_1(T_2)}$ and $t\in T_3(k)$ such that 
$$\chi_0((x_v)) =\chi ((\alpha_v)) \cdot t .$$ This implies that 
$$ \psi_0((\alpha_v)^{-1}) \cdot  (x_v) \in (\chi_0^{-1}(t)(\A_k)) \cap E_2(\A_k)^{\Br_1(E_2)} =\chi_0^{-1}(t)(\A_k)^{\Br_1(\chi_0^{-1}(t))} $$ by (\ref{diagram}) and (\ref{br}). Therefore $\chi_0^{-1}(t)$ is a trivial torsor under $H$ over $k$ by \citep[Theorem 5.2.1]{Skbook}. There is $\eta \in E_2(k)$ such that $t=\chi_0(\eta)$. Since $$(x_v) \cdot \eta^{-1} \cdot \psi_0((\alpha_v))^{-1} \in (\ker(\chi_0)) \cap E_2(\A_k)^{\Br_1(E_2)} =\delta_0(H(\A_k)^{\Br_1(H)})  $$ by (\ref{diagram}) and (\ref{br}),
one obtains that 
$$ (x_v) = [(x_v) \cdot \eta^{-1} \cdot \psi_0((\alpha_v))^{-1} ] \cdot \psi_0((\alpha_v)) \cdot \eta $$ and the claim follows. 

Since 
$$ G(\A_k)^{\Br_1(G)} = \phi_0(E_2(\A_k)^{\Br_1 (E_2)})  \cdot G(k) $$ by Step 0, one concludes that 
$$ G(\A_k)^{\Br_1(G)} = \phi(H(\A_k)^{\Br_1 (H)})  \cdot G(k) $$
as desired by (\ref{diagram}) and (\ref{equ}).

Step 3.  In general, one has a short exact sequence $$1\rightarrow J^0 \rightarrow J \rightarrow \pi_0(J) \rightarrow 1 $$ where $J^0$ is the connected component of identity of $J$ and $\pi_0(J)$ is finite. Since $J^0$ is also a normal subgroup of $H$, one obtains the following two short exact sequences
$$ 1\rightarrow J^0  \rightarrow H \xrightarrow{\phi_1} (H/J^0) \rightarrow 1 $$  
and 
$$ 1\rightarrow \pi_0(J) \rightarrow (H/J^0) \xrightarrow{\phi_2} G \rightarrow 1 $$ with $\phi=\phi_2\circ\phi_1$. 
Then 
\begin{equation}\label{dec1}
 (H/J^0)(\A_k)^{\Br_1(H/J^0)} = \phi_1(H(\A_k)^{\Br_1(H)})\cdot  (H/J^0)(k)  
\end{equation}
by Step 0  and 
\begin{equation} \label{dec2}
 G(\A_k)^{\Br_1(G)} = \phi_2 ( (H/J^0)(\A_k)^{\Br_1(H/J^0)}) \cdot G(k)
\end{equation}  by Step 2. The result follows from combining (\ref{dec1}) and (\ref{dec2}). 
\end{proof}

Zariski open strong approximation with \BMo satisfies the following descent property.

\begin{prop} \label{osatr} Let $H\xrightarrow {\phi} G$ be a surjective homomorphism of connected linear algebraic groups over a number field $k$. If $H$ satisfies Zariski open strong approximation with respect to $\Br_1(H)$ off some finite subset $S$ of $\Omega_k$, then $G$ satisfies Zariski open strong approximation with respect to $\Br_1(G)$ off $S$.
\end{prop}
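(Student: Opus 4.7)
The plan is to reduce Zariski open strong approximation for $G$ to the analogous property for $H$ by means of the descent result Theorem \ref{iso}. Fix a Zariski open dense subset $U$ of $G$ and a non-empty open subset $N \subseteq G(\A_k^S)$ meeting $pr^S(G(\A_k)^{\Br_1(G)})$; the goal is to produce a $k$-point of $U$ whose image in $G(\A_k^S)$ lies in $N$.

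First I would pick an adelic point $(g_v) \in G(\A_k)^{\Br_1(G)}$ with $pr^S((g_v)) \in N$ and invoke Theorem \ref{iso} to write $(g_v) = \phi((h_v)) \cdot g$ with $(h_v) \in H(\A_k)^{\Br_1(H)}$ and $g \in G(k)$. Right translation by $g^{-1}$ is a $k$-automorphism of $G$ that preserves Zariski open dense subvarieties and acts by a homeomorphism on $G(\A_k^S)$, so it suffices to produce a $k$-point of $U' := U \cdot g^{-1}$ whose image in $G(\A_k^S)$ lies in $N' := N \cdot g^{-1}$; translating back by $g$ will yield the desired $k$-point of $U$.

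Next I would set $V = \phi^{-1}(U')$. Because $\phi$ is a surjective homomorphism of connected linear algebraic groups in characteristic zero, it is smooth and faithfully flat, so $V$ is a Zariski open dense subset of $H$. The continuous map $\phi : H(\A_k^S) \to G(\A_k^S)$ pulls $N'$ back to an open subset $\phi^{-1}(N') \subseteq H(\A_k^S)$ containing $pr^S((h_v))$, which itself lies in $pr^S(H(\A_k)^{\Br_1(H)})$. Applying the hypothesis of Zariski open strong approximation for $H$ with respect to $\Br_1(H)$ off $S$ to the Zariski open dense subset $V$ produces $h_0 \in V(k)$ with $pr^S(h_0) \in \phi^{-1}(N')$, and then $\phi(h_0) \cdot g \in U(k)$ has image in $N$. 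The only substantive ingredient is Theorem \ref{iso}; the rest is routine topological bookkeeping, so I do not foresee any serious obstacle.
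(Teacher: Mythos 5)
Your proof is correct and takes essentially the same approach as the paper: invoke Theorem \ref{iso} to factor an adelic point of $G(\A_k)^{\Br_1(G)}$ as $\phi((h_v))\cdot g$ with $g\in G(k)$, translate by $g^{-1}$, pull the resulting adelic neighbourhood and the Zariski open $U\cdot g^{-1}$ back along $\phi$, apply the hypothesis on $H$, and push forward. The only difference is that the paper phrases the adelic open sets inside $G(\A_k)$ with full $S$-components while you work directly in $G(\A_k^S)$; this is cosmetic.
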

\begin{proof} 
Let $U$ be a Zariski open dense subset of $G$ and $$V=(\prod_{v\in S}G(k_v))\times \prod_{v\not\in S} V_v $$ be an open subset of $G(\A_k)$  satisfying $V\cap (G(\A_k)^{\Br_1(G)})\neq \emptyset $. Then
there are $g\in G(k)$ and $\alpha\in H(\A_k)^{\Br_1(H)}$ such that $\phi(\alpha) \cdot g \in V$ by Theorem \ref{iso}. Therefore $$\phi^{-1}(V \cdot g^{-1}) \cap (H(\A_k)^{\Br_1 (H)}) \neq \emptyset . $$
Since $H$ satisfies Zariski open strong approximation with respect to $\Br_1(H)$ off $S$, one concludes that
$$ [\phi^{-1}(U\cdot g^{-1})(k)] \cap [\phi^{-1}(V\cdot g^{-1})] \neq \emptyset . $$ This implies that $U(k) \cap V\neq \emptyset$ as required.
\end{proof}

\begin{cor} \label{tori-isogeny} Suppose two tori $T_1$ and $T_2$ are isogenous over a number field $k$. Then $T_1$ satisfies Zariski open strong approximation with respect to $\Br_1(T_1)$ off $\infty_k$ if and only if $T_2$ satisfies Zariski open strong approximation with respect to $\Br_1(T_2)$ off $\infty_k$.
\end{cor}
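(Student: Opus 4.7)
The plan is to reduce the statement to a double application of Proposition \ref{osatr}. The essential observation is that the relation ``isogenous'' on $k$-tori is symmetric in a concrete, morphism-theoretic way: given an isogeny $\phi: T_1 \to T_2$ with finite kernel annihilated by some integer $n>0$, one has $\ker\phi \subseteq T_1[n] = \ker [n]_{T_1}$. Consequently the multiplication-by-$n$ morphism $[n]_{T_1}: T_1 \to T_1$ factors uniquely as $\psi \circ \phi$ for some $k$-morphism $\psi: T_2 \to T_1$. Since $[n]_{T_1}$ is surjective and $\phi$ is faithfully flat, $\psi$ is surjective; and since $\ker [n]_{T_1}$ is finite, so is $\ker\psi$. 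Hence $\psi: T_2 \to T_1$ is again an isogeny of $k$-tori.

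With this reverse isogeny $\psi$ in hand, the corollary is immediate. If $T_1$ satisfies Zariski open strong approximation with respect to $\Br_1(T_1)$ off $\infty_k$, then Proposition \ref{osatr} applied to the surjective homomorphism $\phi: T_1 \to T_2$ transfers this property to $T_2$. Conversely, assuming the property for $T_2$, applying Proposition \ref{osatr} to $\psi: T_2 \to T_1$ yields the property for $T_1$.

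I do not anticipate any genuine obstacle. The only step that is not purely formal is the construction of $\psi$, but this is a standard consequence of the fact that isogeny is an equivalence relation on the category of $k$-tori; one could alternatively phrase this by dualizing on character lattices, but the direct factorization of $[n]_{T_1}$ through $\phi$ seems most elementary.
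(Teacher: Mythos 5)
Your proof is correct and takes the same route the paper intends: the paper's proof is simply ``It follows from Proposition \ref{osatr},'' tacitly using the symmetry of the isogeny relation, which you have spelled out explicitly by factoring $[n]_{T_1}$ through $\phi$ to produce the reverse surjection $\psi:T_2\to T_1$. Your elaboration is the natural way to fill in the detail the authors left implicit.
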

\begin{proof}
It follows from Proposition \ref{osatr}.
\end{proof}

For a connected linear algebraic group, one can reduce Zariski open strong approximation with \BMo  to its toric part.

\begin{prop} \label{gclag-zosabm} Let $G$ be a connected linear algebraic group over a number field. Assume that $G$ satisfies strong approximation with respect to $\Br_1(G)$ off $\infty_k$. Then $G$ satisfies Zariski open strong approximation with respect to $\Br_1(G)$ off $\infty_k$ if and only if $G^{\textup{tor}}$ satisfies Zariski open strong approximation with respect to $\Br_1(G^{\textup{tor}})$ off $\infty_k$.
\end{prop}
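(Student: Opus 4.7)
The forward implication is immediate from Proposition \ref{osatr} applied to the surjective quotient $\pi\colon G\twoheadrightarrow G^{\tor}$.

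For the reverse direction, fix a Zariski open dense $V\subseteq G$ and an open subset $W=\prod_{v\in\infty_k} G(k_v)\times\prod_{v\notin\infty_k} W_v\subseteq G(\A_k)$ with $W\cap G(\A_k)^{\Br_1(G)}\neq\emptyset$; the goal is to produce $u\in V(k)\cap W$. The plan is to exploit the smooth surjective fibration $\pi\colon G\to G^{\tor}$, whose geometrically integral fibers are $G^{\ssu}$-torsors, combining strong approximation for $G$ with the ZOSA hypothesis for $G^{\tor}$. First, the SA hypothesis produces $g_0\in G(k)\cap W$; since $G(\A_k)^{\Br_1(G)}$ is stable under right translation by $G(k)$ (apply Theorem \ref{iso} to the identity morphism $G\to G$), replacing $V,W$ by their right translates by $g_0^{-1}$ we may assume $g_0=1\in W$. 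Next, by Proposition \ref{fiber-codim}, pick an open dense $V_1\subseteq G^{\tor}$ such that $V\cap\pi^{-1}(t)$ is open dense in $\pi^{-1}(t)$ for every $t\in V_1(k)$, and set $V_0=V_1\cap\pi(V)$, open dense in $G^{\tor}$ by smoothness (hence openness) of $\pi$. Since $\pi(W)$ is open in $G^{\tor}(\A_k)$ and contains $1\in G^{\tor}(\A_k)^{\Br_1(G^{\tor})}$, the ZOSA hypothesis for $G^{\tor}$ yields $t\in V_0(k)\cap\pi(W)$.

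The main obstacle is to lift such $t$ to a $k$-rational point $u\in V(k)\cap W$ sitting in the fiber $\pi^{-1}(t)$. This fiber is a $G^{\ssu}$-torsor over $k$, which admits $k$-points precisely when $t$ lies in the subgroup $\pi(G(k))\subseteq G^{\tor}(k)$. To ensure this condition, I would refine the choice of $t$ by combining Theorem \ref{iso} -- which furnishes the identity $G^{\tor}(\A_k)^{\Br_1(G^{\tor})}=\pi(G(\A_k)^{\Br_1(G)})\cdot G^{\tor}(k)$ -- with the SA hypothesis for $G$, so as to force $t\in\pi(G(k))\cap V_0(k)\cap\pi(W)$. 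Once such a $t$ is obtained, lift it to $h\in G(k)$; the fiber then becomes the trivial torsor $h\cdot G^{\ssu}$, and a concluding application of strong approximation for $G^{\ssu}$ off $\infty_k$ (which holds because $G^{\ssu}$ shares the simply connected cover $G^{\textup{sc}}$ with $G^{\ss}$, and strong approximation for $G^{\textup{sc}}$ off $\infty_k$ is implied by the SA hypothesis for $G$ via a Demarche-type reduction) produces the desired $k$-rational point in $V\cap\pi^{-1}(t)\cap W$. The principal technical hurdle is the coordination of the two density statements in this last step, especially controlling the archimedean contributions coming from $H^1(k_v, G^{\ssu})$ for $v\in\infty_k$ when arranging triviality of the torsor.
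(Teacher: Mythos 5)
Your forward implication is the same one-liner as the paper's. Your reverse direction, however, takes a genuinely different route (fibration over $G^{\tor}$ rather than the paper's reduction to a product $G^{\u}\times_k R(G^{\red})\times_k G^{\textup{sc}}$ via isogenies and Proposition~\ref{osatr}), and that route has two real gaps.

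First, the lifting step. You correctly identify that $\pi^{-1}(t)$ for $t\in G^{\tor}(k)$ is a $G^{\ssu}$-torsor with a $k$-point iff $t\in\pi(G(k))$, but you never actually exhibit a mechanism that forces the $t$ produced by the ZOSA hypothesis for $G^{\tor}$ to land in $\pi(G(k))$. Theorem~\ref{iso} gives you $G^{\tor}(\A_k)^{\Br_1(G^{\tor})}=\pi(G(\A_k)^{\Br_1(G)})\cdot G^{\tor}(k)$, which lets you translate the adelic picture by an element of $G^{\tor}(k)$, but the ZOSA hypothesis applied after that translation still only hands you some $t\in V_0(k)\cap\pi(W)$, with no control on its class in $G^{\tor}(k)/\pi(G(k))\hookrightarrow H^1(k,G^{\ssu})$. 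Since $G^{\ssu}$ is not simply connected in general, there is no Hasse principle to invoke either; so this is a genuine missing step, not a routine verification.

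Second, and more seriously, the closing appeal to ``strong approximation for $G^{\ssu}$ off $\infty_k$'' is false. Sharing a simply connected cover with $G^{\ss}$ does not propagate strong approximation: quotients of groups satisfying strong approximation need not satisfy it. Already for $G=PGL_n$ one has $G^{\ssu}=PGL_n$, which does not satisfy strong approximation off $\infty_k$ (only strong approximation with Brauer--Manin obstruction). And even if you replaced this with ``strong approximation with respect to $\Br_1(G^{\ssu})$,'' you would then need the adelic point of the fibre you found inside $W$ to be orthogonal to $\Br_1(Y_t)$, which is not automatic because $\Br_1(G)\to\Br_1(Y_t)$ need not be surjective. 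The paper avoids both obstacles by never touching $G^{\ssu}$ or its torsors: it descends along surjective group homomorphisms using Theorem~\ref{iso}/Proposition~\ref{osatr} until $G$ is literally a product $G^{\u}\times_k R(G^{\red})\times_k G^{\textup{sc}}$, where the factor that needs ``pure'' strong approximation is the simply connected $G^{\textup{sc}}$ (for which \cite[Corollary~5.3]{CDX} and Proposition~\ref{ndiff} apply), and then invokes Proposition~\ref{product}.
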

\begin{proof} By Proposition \ref{osatr}, one only needs to show that $G$ satisfies Zariski open strong approximation with respect to $\Br_1(G)$ off $\infty_k$ if $G^{\textup{tor}}$ satisfies Zariski open strong approximation with respect to $\Br_1(G^{\textup{tor}})$ off $\infty_k$.

Since the Zariski open strong approximation property has nothing to do with group structure, we can simply assume that $$G=G^{\textup{u}} \times_k G^{\textup{red}}$$ by \citep[Theorem 2.3]{PR}.
Let $R(G^{\red})$ be the radical of $G^{\red}$. Then $R(G^{\red})$ is a torus by \citep[Theorem 2.4]{PR}. Since 
$$|\Pic((G^{\red}/R(G^{\red}))_{\bar k})| < \infty  \ \ \ \text{and} \ \ \  \bar{k}[G^{\red}/R(G^{\red})]^\times =\bar{k}^\times$$ by \citep[\S0]{CT08}, one concludes that $R(G^{\red})$ is isogenous to $G^{\textup{tor}}$ by \citep[Corollary 6.11]{Sansuc}. By Corollary \ref{tori-isogeny}, one has that $R(G^{\red})$ satisfies Zariski open strong approximation with respect to $\Br_1(R(G^{\red}))$ off $\infty_k$.

Since $G^{\red}$ is connected, the natural homomorphism
$$ \phi: G^{\textup{sc}}\times_k R(G^{\red}) \rightarrow G^{\red}; \ \ \ (x, y)\mapsto \pi(x) \cdot y $$ is surjective with a finite kernel by \citep[Theorem 2.4]{PR}, where $\pi: G^{\textup{sc}}\to G^{\textup{ss}}$ is a simply connected covering over $k$.
By Proposition \ref{osatr}, we only need to show the result for the special case
$ G=G^{\u}\times_k R(G^{\red})\times_k G^{\textup{sc}} $. 

Since $G$ satisfies strong approximation with respect to $\Br_1(G)$ off $\infty_k$, one concludes that $G^{\textup{sc}}$ satisfies strong approximation off $\infty_k$ by \citep[Corollary 5.3]{CDX}. The result follows from Proposition \ref{product} and Proposition \ref{ndiff}.
\end{proof}

\begin{defn} A torus $T$ over a number field $k$ is called \emph{simple} if $T$ contains no non-trivial closed sub-tori over $k$.
\end{defn}

It is clear that a torus $T$ is simple if and only if $T^*\otimes_\Bbb Z \Bbb Q$ is an irreducible Galois module.

\begin{thm} \label{zosat} Suppose $T$ is a simple torus over a number field $k$. Then $T$ satisfies Zariski open strong approximation with respect to $\Br_1(T)$ off $\infty_k$ if and only if $T(k)$ is not discrete in
$T(\A_k^{\infty_k})$.
\end{thm}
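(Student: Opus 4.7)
The plan is to reduce the theorem, via Harari's description of $T(\A_k)^{\Br_1(T)}$, to a concrete density question. Step 1 of the proof of Theorem \ref{iso} gives $T(\A_k)^{\Br_1(T)}=\overline{T(k)\cdot T(k_\infty)^0}$ inside $T(\A_k)$; I will show that projecting off $\infty_k$ yields $pr^{\infty_k}(T(\A_k)^{\Br_1(T)})=\overline{T(k)}$ inside $T(\A_k^{\infty_k})$. The inclusion $\subseteq$ is automatic from continuity, while the reverse inclusion lifts any net $t_\alpha\in T(k)$ with $pr^{\infty_k}(t_\alpha)\to x$ by first passing to a subnet whose archimedean coordinates lie in a single coset of the finite group $T(k_\infty)/T(k_\infty)^0$ and then multiplying by a compensating element of $T(k_\infty)^0$ to stabilize the archimedean part; the resulting net in $T(k)\cdot T(k_\infty)^0$ converges to a point of $T(\A_k)$ whose projection is $x$. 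After this reduction, the theorem amounts to: $U(k)$ is dense in $\overline{T(k)}\subseteq T(\A_k^{\infty_k})$ for every Zariski open dense $U\subseteq T$ if and only if $T(k)$ is non-discrete in $T(\A_k^{\infty_k})$.

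For the forward direction I argue by contrapositive. If $T(k)$ is discrete, then $\overline{T(k)}=T(k)$ is itself discrete, the identity $1$ is isolated, and (assuming $\dim T\geq 1$, since the zero-dimensional case is vacuous) the Zariski open dense set $U:=T\setminus\{1\}$ has $U(k)=T(k)\setminus\{1\}$, whose closure does not contain $1\in T(k)\subseteq\overline{T(k)}$, contradicting the Zariski open strong approximation property.

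For the reverse direction, using the continuous $T(k)$-action by translation (which permutes Zariski open dense subvarieties), density of $U(k)$ in $\overline{T(k)}$ reduces to the following single claim: for every Zariski open dense $U\subseteq T$ and every open neighborhood $W$ of $1$ in $T(\A_k^{\infty_k})$, one has $U(k)\cap W\neq\emptyset$. The decisive step is to shrink $W$ to an open \emph{subgroup} $W'\subseteq W$ of $T(\A_k^{\infty_k})$, which is possible because each non-archimedean $T(k_v)$ is locally profinite, admitting at the identity a basis of open subgroups---outside a finite set of places, the principal congruence subgroups $\ker(\mathcal{T}(\O_v)\to\mathcal{T}(\O_v/\mathfrak{m}_v^n))$ of a smooth integral model $\mathcal{T}$ of $T$. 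Then $\Gamma:=T(k)\cap W'$ is a genuine subgroup of $T(k)$, and non-discreteness of $T(k)$ together with homogeneity forces $1$ to be an accumulation point of $T(k)$, so $\Gamma$ is infinite.

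At this point simplicity of $T$ enters. The Zariski closure $H:=\overline{\Gamma}\subseteq T$ is a closed $k$-subgroup of $T$ with identity component $H^0$ a sub-torus, and by simplicity $H^0\in\{\{1\},T\}$; infinitude of $\Gamma$ rules out the first case, so $H=T$ and $\Gamma$ is Zariski dense in $T$. A Zariski dense subset cannot be contained in the proper closed subvariety $T\setminus U$, hence $\Gamma\cap U\neq\emptyset$, and any point therein belongs to $U(k)\cap W'\subseteq U(k)\cap W$. I expect the main obstacle to be the conceptual upgrade from an arbitrary open neighborhood of $1$ to an open subgroup: without passing to a subgroup, one can only show that the subgroup of $T(k)$ generated by $T(k)\cap W$ is Zariski dense, from which one cannot directly produce a single rational point of $U$ inside $W$ (only inside some finite-product extension of $W$ whose length is uncontrolled).
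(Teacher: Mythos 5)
Your proof is correct, and in the crucial reverse direction it takes a genuinely different and more elementary route than the paper. Both proofs handle the forward direction the same way (discreteness lets one delete the identity to break ZOSA) and both use Harari's theorem to reduce the reverse direction to the assertion that $U(k)$ meets every coset $tN$ with $t\in T(k)$ and $N$ an open compact subgroup of $T(\A_k^{\infty_k})$. The divergence is in how one shows $T(k)\cap N$ escapes the proper closed set $T\setminus U$. The paper deduces from Vojta's theorem on integral points on subvarieties of semi-abelian varieties (\cite[Theorem 0.2]{Voj96}) that $(T\setminus U)(k)\cap N$ is \emph{finite}, then contrasts this with the infinitude of $T(k)\cap N$. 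You instead observe that $T(k)\cap N$ is an infinite \emph{subgroup}, so its Zariski closure is a closed $k$-subgroup of $T$ whose identity component is a subtorus; simplicity forces this subtorus to be all of $T$, hence $T(k)\cap N$ is Zariski dense and must meet $U$. Your argument replaces a deep Diophantine input by a one-line algebraic-group fact, at the cost of a weaker intermediate conclusion (Zariski density of $T(k)\cap N$ rather than finiteness of $(T\setminus U)(k)\cap N$), which is exactly all the theorem needs. The auxiliary reductions you set up --- the identification $pr^{\infty_k}(T(\A_k)^{\Br_1(T)})=\overline{T(k)}$ via the finiteness of $\pi_0(T(k_\infty))$, the translation step reducing to neighborhoods of the identity, shrinking to an open compact subgroup by van Dantzig, and the infinitude of $T(k)\cap N$ from non-discreteness --- are all sound and match the implicit bookkeeping in the paper. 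One presentational nit: the step ``infinitude of $\Gamma$ rules out $H^0=\{1\}$'' deserves the explicit remark that $H^0=\{1\}$ makes $H$ a finite group scheme, so $\Gamma\subseteq H(\bar{k})$ would be finite; you have it implicitly, but it is the load-bearing point where simplicity is used.
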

\begin{proof} If  $T(k)$ is discrete in $T(\A_k^{\infty_k})$, there is an open compact subgroup $C$ of $T(\A_k^{\infty_k})$ such that $C\cap T(k)=\{1_{T}\}$. Let $U=T\setminus \{1_T\}$. Then $U$ is a Zariski dense open subset of $T$ such that 
$$ [(\prod_{v\in\infty_k} T(k_v)) \times C]\cap (T(\A_k)^{\Br_1 (T)}) \neq \emptyset \ \ \ \text{and} \ \ \ U(k)\cap C =\emptyset .$$
This means that $T$ does not satisfy Zariski open strong approximation with respect to $\Br_1(T)$ off $\infty_k$.

Otherwise, one concludes that $T(k)\cap N$ is infinite for any open compact subgroup $N$ of $T(\A_k^{\infty_k})$. For any open subset $M$ of $T(\A_k)$ with $$M\cap (T(\A_k)^{\Br_1 (T)}) \neq \emptyset , $$ there are $t\in T(k)$ and an open compact subgroup $N_1$ of $T(\A_k^{\infty_k})$ such that
$$ t\cdot N_1\subseteq pr^{\infty_k} (M) $$ by \citep[Theorem 2]{Harari08}. In particular, $T(k)\cap N_1$ is infinite.

Let $U$ be a Zariski open dense subset of $T$ and $V=T\setminus t^{-1}\cdot U$. Since $T$ is simple, one has $V(k)\cap N_1$ is finite by \cite[Theorem 0.2]{Voj96}.
This implies that $(t^{-1}\cdot U)(k)\cap N_1\neq \emptyset$. One concludes that
 $$U(k) \cap (t\cdot N_1) \neq \emptyset \ \   \Rightarrow \ \ U(k) \cap (pr^{\infty_k} (M)) \neq \emptyset$$ as required.
\end{proof}

A torus $T$ satisfying that $T(k)$ is discrete in $T(\A_k^{\infty_k})$ can be determined explicitly by \citep[Theorem 3.5]{LiuXu15}. For example, when $T=\Bbb G_m$, the discreteness is equivalent to $k=\Bbb Q$ or an imaginary quadratic field.
Since any torus $T$ over $k$ is isogenous to the product of simple tori by Maschke's Theorem, one can obtain a complete description of Zariski open strong approximation with \BMo off $\infty_k$ for general tori by Corollary \ref{osatr}.

\begin{cor} \label{Gm} Suppose a torus $T$ is isogenous to $\prod_{i=1}^n T_i$ over a number field $k$ where $T_i$ is a simple torus over $k$ for $1\leq i\leq n$. Then $T$ satisfies Zariski open strong approximation with respect to $\Br_1(T)$ off $\infty_k$ if and only if  $T_i$ satisfies Zariski open strong approximation with respect to $\Br_1(T_i)$ off $\infty_k$ for $1\leq i\leq n$.

In particular, $\Bbb G_m^r$ with some positive integer $r$ satisfies Zariski open strong approximation with \BMo off $\infty_k$ if and only if $k$ is neither $\Bbb Q$ nor an imaginary quadratic field.
\end{cor}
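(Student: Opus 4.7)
My plan is to deduce the corollary directly from the preceding results, using isogeny invariance to pass to a product and the product-compatibility of Zariski open strong approximation.

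For the first assertion, observe that the hypothesis $T \sim \prod_{i=1}^n T_i$ together with Corollary \ref{tori-isogeny} reduces the problem to the case where $T = \prod_{i=1}^n T_i$ on the nose, since the property of satisfying Zariski open strong approximation with respect to $\Br_1$ off $\infty_k$ is invariant under isogeny. I would then argue by induction on $n$ using Proposition \ref{product}. For the inductive step, write $T = T_1 \times_k (T_2 \times_k \cdots \times_k T_n)$ and verify the non-emptiness hypothesis $T(\A_k)^{\Br_1(T)} \neq \emptyset$ appearing in Proposition \ref{product}; this is trivially satisfied because the identity element of $T$ is a rational point and hence lies in the Brauer--Manin set. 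Proposition \ref{product} then gives that Zariski open strong approximation with respect to $\Br_1$ off $\infty_k$ holds for $T$ if and only if it holds for $T_1$ and for $T_2 \times_k \cdots \times_k T_n$, and the inductive hypothesis yields the desired equivalence.

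For the second assertion, I would specialize to $T = \Bbb G_m^r$ and $T_i = \Bbb G_m$. Since $\Bbb G_m$ is one-dimensional, it is simple in the sense of the preceding definition. By Theorem \ref{zosat}, $\Bbb G_m$ satisfies Zariski open strong approximation with respect to $\Br_1(\Bbb G_m)$ off $\infty_k$ if and only if $\Bbb G_m(k) = k^\times$ is not discrete in $\Bbb G_m(\A_k^{\infty_k}) = (\A_k^{\infty_k})^\times$. The remark following Theorem \ref{zosat} (together with \cite[Theorem 3.5]{LiuXu15}) records the classical fact that $k^\times$ is discrete in $(\A_k^{\infty_k})^\times$ precisely when $k = \Bbb Q$ or $k$ is an imaginary quadratic field, so $\Bbb G_m$ satisfies the property if and only if $k$ is neither of these. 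Combining this with the first assertion applied to $T = \Bbb G_m^r$ gives the conclusion.

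No step here presents a genuine obstacle: the two pieces of input, namely isogeny invariance (Corollary \ref{tori-isogeny}) and the product decomposition (Proposition \ref{product}), are already established, and the simplicity of $\Bbb G_m$ together with Theorem \ref{zosat} reduces the final statement to a classical discreteness criterion. The only point requiring minor care is checking the Brauer--Manin non-emptiness assumption at each step of the induction, which is automatic from the existence of the identity as a rational point.
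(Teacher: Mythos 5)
Your proof is correct and follows essentially the same route as the paper: reduce to the product case via isogeny invariance (Corollary \ref{tori-isogeny}, itself a consequence of Proposition \ref{osatr}), then split the product into simple factors using Proposition \ref{product}, with the second assertion reduced via Theorem \ref{zosat} to the discreteness criterion from \citep[Theorem 3.5]{LiuXu15}. Your explicit check of the Brauer--Manin non-emptiness hypothesis (via the identity section) is a harmless addition the paper leaves implicit.
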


\begin{proof} The first part follows from Corollary \ref{osatr} and Proposition \ref{product}. The second part follows from \citep[Theorem 3.5]{LiuXu15}.
\end{proof}

% The second part of Example 5.2 in \citep{CaoXuToric} follows from Proposition \ref{fibration1} (or Remark \ref{fibstr}) and Corollary \ref{Gm} with $V=\Bbb G_m\setminus \{1\}$.

\begin{rem}\label{Br_1=Br} If $G$ is a connected linear algebraic group, then 
$$ G(\A_k)^{\Br_1(G)} =G(\A_k)^{\Br(G)} $$ by \citep[Remark 5.4]{CDX}. Therefore Theorem \ref{iso}, Proposition \ref{osatr}, Corollary \ref{tori-isogeny}, Proposition \ref{gclag-zosabm}, Theorem \ref{zosat} and Corollary \ref{Gm} are also true with respect to $\Br(\cdot)$.
\end{rem}

\begin{rem} \label{refine-def} Let $X$ be an algebraic variety over a number field $k$ and $B$ be a subgroup of $\Br(X)$. One can refine the definition of strong approximation with respect to $B$ off $\infty_k$ as follows. 

We say $X$ satisfies strong approximation with respect to $B$ off $\infty_k$ if $X(k)$ is dense in $$ X(\A_k)_{\bullet}^B=(\prod_{v\in \infty_k} \pi_0(X(k_v)) \times \prod_{v\not\in \infty_k} X(\A_k^{\infty_k}))^B  $$ where $\pi_0(X(k_v))$ is the set of connected components of $X(k_v)$ for each $v\in \infty_k$. 

We say $X$ satisfies Zariski open strong approximation with respect to $B$ off $\infty_k$ if $U(k)$ is dense in $ X(\A_k)_{\bullet}^B$ for any open dense subset $U$ of $X$.

If $X$ and $Y$ are algebraic varieties over a number field $k$, then $$\pi_0(X(k_v))\times \pi_0(Y(k_v)) =\pi_0 [(X\times_k Y)(k_v)] $$ for any $v\in \infty_k$.

If $H\xrightarrow{\phi} G$ is a surjective homomorphism of connected linear algebraic groups, then $[G(k_v): \phi(H(k_v))]$ is finite by \citep[Theorem 6.14]{PR} for all $v\in \infty_k$. Since the image
of a connected component of $H(k_v)$ under $\phi$ is a connected component of $G(k_v)$ for any $v\in \infty_k$, one concludes that $\phi$ induces a surjective map
$\pi_0(H(k_v))\to \pi_0(G(k_v))$. 

Therefore Proposition \ref{product}, Proposition \ref{osatr}, Corollary \ref{tori-isogeny}, Theorem \ref{zosat}, Corollary \ref{Gm} and Proposition \ref{gclag-zosabm} still hold in this refined sense by the same argument.
\end{rem}

\section{Arithmetic purity with \BMo for connected linear algebraic groups}\label{conngroups}

In this section, we will study arithmetic purity with \BMo for a connected linear algebraic group.  

The arithmetic purity with \BMo for connected linear algebraic groups satisfies the following descent property.

\begin{prop}\label{purity-descent} Let $\phi: H\to G$ be a surjective homomorphism of connected linear algebraic groups over a number field $k$, $S$ be a non-empty finite subset of $\Omega_k$ and $c$ be an integer bigger than $1$. If $H$ satisfies the arithmetic purity with respect to $\Br_1(H)$ off $S$ of codimension $c$, then $G$ satisfies the arithmetic purity with respect to $\Br_1(G)$ off $S$ of codimension $c$.
\end{prop}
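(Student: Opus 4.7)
The plan is to deduce purity for $G$ from that of $H$ by combining Theorem \ref{iso}, which identifies $G(\A_k)^{\Br_1(G)}$ with $\phi(H(\A_k)^{\Br_1(H)}) \cdot G(k)$, with a translation argument and the hypothesised purity of $H$.

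Let $U \subseteq G$ be open with $\codim(G \setminus U, G) \geq c$. Since $\phi \colon H \to G$ is a surjective homomorphism of smooth connected linear algebraic groups in characteristic zero, it is smooth and faithfully flat, so $V := \phi^{-1}(U) \subseteq H$ is open with $\codim(H \setminus V, H) \geq c$ and therefore satisfies strong approximation with respect to $\Br_1(H)$ off $S$ by hypothesis. Given an open neighbourhood
\[ W = \prod_{v \in S} U(k_v) \times \prod_{v \notin S} W_v \subseteq U(\A_k) \]
meeting $U(\A_k)^{\Br_1(G)}$, the goal is to produce an element of $U(k) \cap W$.

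Pick $(x_v) \in W \cap U(\A_k)^{\Br_1(G)}$; viewed in $G(\A_k)$ this point lies in $G(\A_k)^{\Br_1(G)}$ because Brauer--Manin orthogonality factors through restriction. By Theorem \ref{iso} there are $(y_v) \in H(\A_k)^{\Br_1(H)}$ and $g \in G(k)$ with $(x_v) = \phi((y_v)) \cdot g$. Set $U' := U \cdot g^{-1}$ and $V' := \phi^{-1}(U')$. Right-translation by $g$ is a $k$-automorphism of the variety $G$, so $\codim(G \setminus U', G) \geq c$ and hence $\codim(H \setminus V', H) \geq c$; consequently $V'$ also satisfies strong approximation with respect to $\Br_1(H)$ off $S$, and $(y_v) \in V'(\A_k)$ since $\phi((y_v)) = (x_v) \cdot g^{-1} \in U'(\A_k)$.

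Finally, by continuity of $\phi$ choose an open neighbourhood $N \subseteq V'(\A_k)$ of $(y_v)$ with $N_v = V'(k_v)$ for $v \in S$ and $\phi(N_v) \subseteq W_v \cdot g^{-1}$ for $v \notin S$. Since $(y_v) \in N \cap V'(\A_k)^{\Br_1(H)}$, strong approximation applied to $V'$ produces $y \in V'(k)$ with $pr^S(y) \in pr^S(N)$, and then $\phi(y) \cdot g$ is a $k$-rational point of $U$ whose off-$S$ components lie in $W$, so it belongs to $U(k) \cap W$. The main technical point is to invoke Theorem \ref{iso} and absorb the rational defect $g$ via a right translation that preserves the codimension hypothesis on both $U$ and its pull-back to $H$; once this is in place the rest is routine continuity and an application of the hypothesised strong approximation for $V'$.
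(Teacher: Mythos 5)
Your proof is correct and follows essentially the same route as the paper's: both apply Theorem \ref{iso} to write the adelic point as $\phi((y_v)) \cdot g$ with $g \in G(k)$, translate $U$ by $g^{-1}$ so that the codimension hypothesis and the purity of $H$ apply to $\phi^{-1}(U g^{-1})$, and then push a $k$-point of that pull-back forward to $U(k)\cap W$. The only cosmetic difference is that the paper works directly with $\phi^{-1}(W g^{-1})$ as the adelic open set in $H$, whereas you describe the choice of neighbourhood $N$ a bit more loosely via continuity; the underlying argument is the same.
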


\begin{proof} Let $U$ be an open subset of $G$ with $\codim(G\setminus U, G)\geq c$ and $$N=(\prod_{v\in S} U(k_v) )\times \prod_{v\not\in S} N_v $$ be an open subset of $U(\A_k)$ with $N\cap (U(\A_k)^{\Br_1(U)})\neq \emptyset$. There are $g\in G(k)$ and $(h_v)\in H(\A_k)^{\Br_1(H)}$ such that $\phi((h_v))\cdot g \in N$ by Theorem \ref{iso}. 

Since $\phi$ is faithfully flat, one obtains that $\phi^{-1}(U\cdot g^{-1})$ is an open subset of $H$ with $\codim(H\setminus \phi^{-1}(U\cdot g^{-1}), H)\geq c$.  Therefore $\phi^{-1}(U\cdot g^{-1})$ satisfies strong approximation with respect to $\Br_1(\phi^{-1}(U\cdot g^{-1}))$ off $S$. Since 
$$ (h_v) \in \phi^{-1}(N\cdot g^{-1}) \cap \phi^{-1}(U\cdot g^{-1})(\A_k)^{\Br_1(\phi^{-1}(U\cdot g^{-1}))} , $$ there is $x\in \phi^{-1}(U\cdot g^{-1})(k) \cap \phi^{-1}(N\cdot g^{-1})$. Therefore one concludes 
$$ \phi(x) \cdot g \in U(k) \cap N$$ as desired.
\end{proof}

For a general connected linear algebraic group $G$, one has the following arithmetic purity result. 

\begin{cor}\label{alggp2}
Let $G$ be a connected linear algebraic group over a number field $k$. If $G^{\textup{sc}}$ satisfies the arithmetic purity off $\infty_k$. then $G$ satisfies arithmetic purity with respect to  
$\Br_1(G)$ off $\infty_k$ of codimension $2+\dim (G^{\textup{tor}})$.
\end{cor}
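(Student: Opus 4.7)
The plan is to reduce, via Proposition~\ref{purity-descent}, to a variety of the form $G^{\u}\times_k G^{\textup{sc}}\times_k T$ where $T$ is a torus of dimension $\dim(G^{\tor})$, and then to apply Proposition~\ref{purity-no-zo} to obtain the claimed codimension $2+\dim(G^{\tor})$ from the torus factor. The torus cannot contribute to ordinary arithmetic purity, but it does satisfy strong approximation with respect to its algebraic Brauer group off $\infty_k$ by \cite[Theorem~2]{Harari08}, and Proposition~\ref{purity-no-zo} is precisely the mechanism that converts this into additional codimension.

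First I would construct a surjective homomorphism $\phi\colon H \to G$ of connected linear algebraic groups whose underlying $k$-variety is isomorphic to $G^{\u}\times_k G^{\textup{sc}}\times_k R(G^{\red})$. For this I would combine the Levi decomposition $G = G^{\u}\rtimes L$ with $L\cong G^{\red}$ (\cite[Theorem~2.3]{PR}) with the natural isogeny $\alpha\colon G^{\textup{sc}}\times_k R(L)\to L$; twisting the action of $L$ on $G^{\u}$ through $\alpha$ and taking the semidirect product gives the group $H = G^{\u}\rtimes(G^{\textup{sc}}\times_k R(L))$ together with a group-theoretic surjection $\phi$ onto $G$ whose kernel $\ker(\alpha)$ is finite. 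Note that $R(L)$ is isogenous to $G^{\tor}$, hence has dimension $\dim(G^{\tor})$. By Proposition~\ref{purity-descent}, the desired conclusion for $G$ will follow from the arithmetic purity of $H$ with respect to $\Br_1(H)$ off $\infty_k$ of codimension $2+\dim(G^{\tor})$.

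To establish this for $H$ I would first combine the assumption on $G^{\textup{sc}}$ with the fact that the unipotent group $G^{\u}$ is isomorphic to an affine space in characteristic zero, and so satisfies arithmetic purity off any non-empty $S$ by \cite[Proposition~3.6]{CaoXuToric}; Proposition~\ref{purity-prod} then gives the arithmetic purity of $Y := G^{\u}\times_k G^{\textup{sc}}$ off $\infty_k$. Applying Proposition~\ref{purity-no-zo} with $X = R(L)$ and this $Y$ yields the arithmetic purity of $H\cong X\times_k Y$ with respect to $p^{*}(\Br_1(R(L)))$ off $\infty_k$ of codimension $2+\dim(R(L)) = 2+\dim(G^{\tor})$. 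Since $p^{*}(\Br_1(R(L)))\subseteq\Br_1(H)$ by functoriality of the algebraic Brauer group, and since strong approximation of an open subvariety with respect to a smaller Brauer subgroup trivially implies the same with respect to a larger one (the Brauer--Manin sets are nested in the reverse direction, while $X(k)$ always lies in the smaller set), this upgrades to arithmetic purity of $H$ with respect to $\Br_1(H)$ off $\infty_k$ of the required codimension.

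The main obstacle I foresee is Step~1: Proposition~\ref{purity-descent} requires a genuine surjective group homomorphism, whereas the Levi decomposition alone only yields an isomorphism of $k$-varieties $G^{\u}\times_k L\cong G$, not of groups. Assembling $H$ as a semidirect product with the action twisted through the isogeny $\alpha$ is the device that simultaneously preserves the variety-level product structure needed to feed into Propositions~\ref{purity-prod} and~\ref{purity-no-zo} and provides the group-theoretic surjection required for the descent of Proposition~\ref{purity-descent}.
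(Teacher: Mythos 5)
Your proof is correct and follows essentially the same route as the paper: reduce via Proposition~\ref{purity-descent} and the isogeny $G^{\textup{sc}}\times_k R(G^{\red})\to G^{\red}$ to a group that is a product variety $G^{\u}\times_k G^{\textup{sc}}\times_k (\text{torus})$, then apply \cite[Proposition~3.6]{CaoXuToric}, Proposition~\ref{purity-prod}, \cite[Theorem~2]{Harari08} and Proposition~\ref{purity-no-zo}. Where you diverge is in the bookkeeping: the paper invokes \cite[Lemma~2.1]{CDX} to match $\Br_1(G)$ with the pull-back from the torus factor, whereas you replace that step with the observation that strong approximation with respect to a subgroup $B'\subseteq B$ automatically gives it with respect to $B$ (since $X(\A_k)^{B}\subseteq X(\A_k)^{B'}$ and $X(k)$ lies in both); and you make explicit the twisted semidirect product $H=G^{\u}\rtimes_\alpha(G^{\textup{sc}}\times_k R(L))$ that supplies the genuine group-theoretic surjection onto $G$, a point the paper handles tacitly by appealing to the Levi decomposition and ``the same arguments'' in Proposition~\ref{gclag-zosabm}. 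Both clarifications are sound and, if anything, make the reduction more transparent.
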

\begin{proof} By Proposition \ref{purity-descent} and the same arguments in Proposition \ref{gclag-zosabm}, one only needs to prove that the result holds for the special case where $$G=G^{\u}\times_k R(G^{\red})\times_k G^{\textup{sc}}$$ where $R(G^{\red})$ is the radical of $G^{\textup{red}}$. Since $R(G^{\red})$ is isogenous to $G^{\tor}$ in the proof of Proposition \ref{gclag-zosabm}, one can further assume that $$G=G^{\u}\times_k G^{\tor}\times_k G^{\textup{sc}}$$ by Proposition \ref{purity-descent}. The result follows from 
Proposition \ref{purity-no-zo}, \cite[Theorem 2]{Harari08},  \cite[Proposition 3.6]{CaoXuToric}, Proposition \ref{purity-prod} and \cite[Lemma 2.1]{CDX}. \end{proof}

If $G^{\textup{tor}}$ satisfies Zariski open strong approximation with Brauer\textendash Manin obstruction, one can improve Corollary \ref{alggp2}.

\begin{cor} \label{purity-zo} Let $G$ be a connected linear algebraic group over a number field $k$ with $\bar{k}[G]^\times \neq \bar k^\times$. If $G^{\textup{tor}}$ satisfies Zariski open strong approximation with respect to $\Br_1(G^{\textup{tor}})$ off $\infty_k$ and $G^{\textup{sc}}$ satisfies the arithmetic purity off $\infty_k$, then $G$ satisfies the arithmetic purity with respect to $\Br_1(G)$ off $\infty_k$ of codimension $1+\dim (G^{\textup{tor}})$.
\end{cor}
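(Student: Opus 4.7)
The plan is to mirror the proof of Corollary \ref{alggp2}, but to replace the invocation of Proposition \ref{purity-no-zo} by that of Corollary \ref{purity-prod-zo}; the latter applies thanks to the strengthened hypothesis of Zariski open strong approximation on $G^{\tor}$, and it yields the improved codimension bound $\dim(G^{\tor})+1$ in place of $\dim(G^{\tor})+2$. Note that $\bar{k}[G]^\times/\bar{k}^\times\cong G^*$, so the hypothesis $\bar{k}[G]^\times\neq\bar{k}^\times$ is equivalent to $\dim(G^{\tor})\geq 1$, which ensures $\dim(G^{\tor})+1\geq 2$ and hence that the various arithmetic purity statements below are meaningful.

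The first step is to reduce to a product decomposition. By Proposition \ref{purity-descent}, combined with the same structural reduction used in the proof of Proposition \ref{gclag-zosabm} (namely \cite[Theorem 2.3]{PR} to split off $G^{\u}$, followed by replacing the radical $R(G^{\red})$ by the isogenous torus $G^{\tor}$ via Corollary \ref{tori-isogeny}, which preserves the hypothesis of Zariski open strong approximation), it suffices to treat the special case
\[ G = G^{\u}\times_k G^{\tor}\times_k G^{\textup{sc}}. \]
Then, since $G^{\u}\cong\A^n$ satisfies the arithmetic purity off $\infty_k$ by \cite[Proposition 3.6]{CaoXuToric} and $G^{\textup{sc}}$ satisfies it by hypothesis, Proposition \ref{purity-prod} shows that $Y:=G^{\u}\times_k G^{\textup{sc}}$ satisfies the arithmetic purity off $\infty_k$, hence a fortiori of codimension $\dim(G^{\tor})+1$.

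Applying Corollary \ref{purity-prod-zo} with $X:=G^{\tor}$, $Y$ as above, and $B=\Br_1(G^{\tor})$, one obtains that $G=X\times_k Y$ satisfies the arithmetic purity with respect to $p^*(\Br_1(G^{\tor}))$ off $\infty_k$ of codimension $\dim(G^{\tor})+1$, where $p\colon G\to G^{\tor}$ is the projection. To conclude, observe that $p^*(\Br_1(G^{\tor}))\subseteq\Br_1(G)$ by functoriality of $\Br_1$, and strong approximation with respect to a smaller subgroup of $\Br$ formally entails strong approximation with respect to any larger subgroup, because rational points are orthogonal to every Brauer element by class field theory. This upgrades the conclusion to arithmetic purity with respect to $\Br_1(G)$ off $\infty_k$ of codimension $\dim(G^{\tor})+1$. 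There is no substantial new ingredient beyond what has been assembled in the preceding sections; the main subtlety is keeping careful track of the Brauer group inclusions through the reduction, and verifying that the gain in codimension from $\dim(G^{\tor})+2$ to $\dim(G^{\tor})+1$ is correctly propagated through the chosen fibration structure.
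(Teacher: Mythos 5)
Your proof is correct and follows essentially the same route as the paper: reduce via Proposition \ref{purity-descent} and the structural reduction of Proposition \ref{gclag-zosabm} to the product case $G=G^{\u}\times_k G^{\tor}\times_k G^{\textup{sc}}$, then apply Corollary \ref{purity-prod-zo} with $X=G^{\tor}$ and $Y=G^{\u}\times_k G^{\textup{sc}}$ (the latter satisfying arithmetic purity by Proposition \ref{purity-prod} and \cite[Proposition 3.6]{CaoXuToric}). The only cosmetic difference is that you finish with the formal observation that strong approximation with respect to a smaller Brauer subgroup implies it with respect to any larger one, whereas the paper instead cites \cite[Lemma 2.1]{CDX} to handle the passage from $p^*(\Br_1(G^{\tor}))$ to $\Br_1(G)$; both are valid ways to close the gap.
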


\begin{proof} By Proposition \ref{purity-descent} and the same arguments as those in Corollary \ref{alggp2}, one can assume that $$ G=G^{\u}\times_k G^{\tor}\times_k G^{\textup{sc}}. $$ 
The result follows from Corollary \ref{purity-prod-zo}, \cite[Proposition 3.6]{CaoXuToric}, Proposition \ref{purity-prod} and \cite[Lemma 2.1]{CDX}. \end{proof}

The assumption that $G^{\textup{sc}}$ satisfies the arithmetic purity off $S$ in Corollary \ref{alggp2} and Corollary \ref{purity-zo} holds when $G^{\ss}$ is quasi-split by Theorem \ref{sss-quasi-split}. On the other hand, one can extend the construction of \cite[Example 5.2]{CaoXuToric} to explain that the codimension condition in Corollary \ref{alggp2} and the assumption that $G^{\textup{tor}}$ satisfies Zariski open strong approximation in Corollary \ref{purity-zo} are not redundant in general.

\begin{prop}\label{counterexconnectedgroups}
Let $G$  be a connected linear algebraic group defined over a number field $k$ with $\bar{k}[G]^\times\neq \bar{k}^\times$ and $f: G\to G^{\textup{tor}}$ be the canonical surjective homomorphism with $\dim (\ker(f))\geq 1$. Suppose that $G^{\tor}(k)$ is discrete in $G^{\tor}(\A_{k}^{\infty})$ and $G^{\ss}$ is simply connected. 

If $Z$ is a translation of a prime divisor of $\ker(f)$ by an element in $G(k)$, then $G\setminus Z$ does not satisfy strong approximation with \BMo off $\infty_{k}$.
\end{prop}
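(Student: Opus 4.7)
The plan is to argue by contradiction: assume $G\setminus Z$ satisfies strong approximation with \BM obstruction off $\infty_k$, and derive a contradiction by iterating Proposition \ref{generalobservation} along the fibration $f\colon G\setminus Z\to G^{\tor}$. Since $Z=g_0\cdot Z_0\subseteq g_0\cdot\ker(f)$, the image $f(Z)=\{x_0\}$ is a single $k$-point with $x_0:=f(g_0)\in G^{\tor}(k)$, and the fibre of $f|_{G\setminus Z}$ over $x_0$ is $g_0\cdot(\ker(f)\setminus Z_0)$, which is $k$-isomorphic to $G^{\ssu}\setminus Z_0$ via left translation by $g_0^{-1}$. The hypothesis that $G^{\tor}(k)$ is discrete in $G^{\tor}(\A_k^{\infty_k})$ ensures that $\{x_0\}$ is open in $G^{\tor}(k)$ with the restricted topology, so Proposition \ref{generalobservation} forces $G^{\ssu}\setminus Z_0$ to satisfy strong approximation with respect to $i^*(\Br(G\setminus Z))$ off $\infty_k$, where $i$ denotes the closed immersion of the fibre.

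Next I would check that this restricted Brauer subgroup is contained in the trivial part. Since $\bar k[G]^\times\neq \bar k^\times$ forces $\dim G^{\tor}\geq 1$, the codimension of $Z$ in $G$ is $\dim G^{\tor}+1\geq 2$, and the purity theorem for Brauer groups of smooth varieties identifies $\Br(G)\cong \Br(G\setminus Z)$. The map $i^*$ then factors as $\Br(G)\to \Br(G^{\ssu})\to \Br(G^{\ssu}\setminus Z_0)$. The simple-connectedness of $G^{\ss}$ yields $\textup{Pic}(G^{\ssu}_{\bar k})=0$, which combined with the triviality of the character group $(G^{\ssu})^*$ and the vanishing of $\Br(G^{\ssu}_{\bar k})$ for connected linear algebraic groups over $\bar k$ gives $\Br(G^{\ssu})=\Br_0(G^{\ssu})$. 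Consequently $i^*(\Br(G\setminus Z))\subseteq \Br_0(G^{\ssu}\setminus Z_0)$, and the previous step would force $G^{\ssu}\setminus Z_0$ to satisfy \emph{plain} strong approximation off $\infty_k$.

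The main obstacle is to contradict this last statement. Since $\textup{Pic}(G^{\ssu}_{\bar k})=0$ and $Z_0$ is $k$-rational (being a $g_0^{-1}$-translate of $Z$), the primitivity of $Z_0$ lets us write $Z_0=\textup{div}(h)$ with $h\in k[G^{\ssu}]$, where $[h]$ generates $\bar k[G^{\ssu}\setminus Z_0]^\times/\bar k^\times\cong \mathbb{Z}$ as a trivial Galois module; this defines a morphism $h\colon G^{\ssu}\setminus Z_0\to \G_m$. I would then construct an adelic point $(y_v)\in (G^{\ssu}\setminus Z_0)(\A_k^{\infty_k})$ whose $h$-valuation is prescribed at each finite place in such a way that any $k$-rational approximation would produce a $k$-unit with a non-trivial valuation pattern incompatible with the $S$-unit constraint encoded by the discreteness of $G^{\tor}(k)$ and the non-triviality of $G^{\tor}$. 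This construction generalizes \cite[Example 5.2]{CaoXuToric}, where $G=\G_m\times\G_a$, $Z_0=\{0\}$ and $G^{\ssu}\setminus Z_0=\G_m$, and the obstruction reduces to the classical failure of strong approximation for $\G_m$ off $\infty_k$ over $\mathbb{Q}$ or an imaginary quadratic field.
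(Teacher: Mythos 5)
The first three-quarters of your argument matches the paper exactly: you correctly reduce, via Proposition~\ref{generalobservation} and the discreteness hypothesis, to the statement that the fibre $W := \ker(f)\setminus Z_0$ satisfies strong approximation with respect to the restricted Brauer group, and you correctly use purity (codimension $= 1+\dim G^{\tor}\geq 2$), $\Pic(\ker(f)_{\bar k})=0$ and $(\ker(f))^*=0$ to show that this restricted group is trivial, so $W$ would have to satisfy \emph{plain} strong approximation off $\infty_k$.

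The gap is in the final step. You propose to contradict plain strong approximation of $W$ by an explicit adelic construction using the function $h$ cutting out $Z_0$, analogous to \cite[Example 5.2]{CaoXuToric}. This is not carried out, and as sketched it has two problems. First, you invoke ``the $S$-unit constraint encoded by the discreteness of $G^{\tor}(k)$ and the non-triviality of $G^{\tor}$'' to constrain rational approximations, but at this point in the argument you are working entirely inside the fibre $W\subset\ker(f)$; the discreteness of $G^{\tor}(k)$ has already been consumed in the application of Proposition~\ref{generalobservation} and gives no further information about $h$-values of points of $W(k)$. Second, you say the model case reduces to ``the classical failure of strong approximation for $\G_m$ off $\infty_k$ over $\Q$ or an imaginary quadratic field'' --- but that is the failure of strong approximation \emph{with Brauer--Manin obstruction}; plain strong approximation for $\G_m$ off $\infty_k$ fails over every number field, and the mechanism is topological, not a valuation count. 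In general, prescribing a valuation profile does not by itself rule out a rational approximant, because the image of $W(k)$ under $h$ in $\bigoplus_v\Z$ need not be controllable without substantial extra input.

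The paper closes the argument in a cleaner, purely topological way that you should adopt. By Min\v{c}hev's theorem \cite[Theorem 1]{Minchev} (see also \cite[Proposition 2.2]{Rapinchuk}), if $W$ satisfied strong approximation off $\infty_k$ (note $W(\A_k)\neq\emptyset$, since $W$ is a dense open in the smooth group $\ker(f)$), then $W_{\bar k}$ would be simply connected. But the exact sequence
$$\bar{k}[\ker(f)]^{\times}/\bar{k}^{\times}\to\bar{k}[W]^{\times}/\bar{k}^{\times}\to\textup{Div}_{Z_{\bar{k}}}(\ker(f)_{\bar{k}})\to\Pic(\ker(f)_{\bar{k}})=0$$
together with $\bar k[\ker(f)]^\times=\bar k^\times$ shows that $\bar k[W]^\times/\bar k^\times$ is a nonzero free abelian group; hence multiplication by $n$ on $\bar k[W]^\times$ is not surjective for $n\geq 2$, and the Kummer sequence gives $\H^{1}(W_{\bar{k}},\mu_{n})=\Hom(\pi_{1}(W_{\bar{k}}),\Z/n\Z)\neq 0$. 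This contradicts simple connectedness of $W_{\bar k}$. Replacing your explicit-construction sketch by this Min\v{c}hev--Kummer argument makes the proof complete.
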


\begin{proof} Without loss of generality, we can assume that $Z$ is a prime divisor of $\ker(f)$. 
Let $U=G\setminus Z$. Since $$\codim(Z, G)=1+ \dim(G^{\tor})\geq 2 , $$ one has the canonical isomorphism $\Br(G)\cong \Br(U)$ by the cohomological purity. Let $$W=U\cap \ker(f)=\ker(f) \setminus Z .$$ Since the following diagram 
\SelectTips{eu}{12}$$\xymatrix@C=20pt @R=14pt{
W \ar[r]^{j \ \ }\ar[d]_{i} &  \ker(f) \ar[d]
\\  U \ar[r] & G
}$$
commutes, one obtains that $$i^*(\Br(U))\subseteq j^*(\Br(\ker(f)))=\Br(k)$$ by \citep[Lemma 2.1]{CDX} and \citep[Proposition 2.6]{CTXu09} and simple connectedness assumption on $G^{\ss}$. 

Suppose that $U$ satisfies strong approximation with \BMo off $\infty_k$. Since $G^{\tor}(k)$ is discrete in $G^{\tor}(\A_{k}^{\infty})$, one gets that $W$ satisfies strong approximation off $\infty_k$ by Proposition \ref{generalobservation}. 
This implies that $W_{\bar{k}}$ is simply connected  by \cite[Theorem 1]{Minchev} (see also \cite[Proposition 2.2]{Rapinchuk}). 
Since $$\Pic(\ker(f)_{\bar{k}})= 1 \ \ \ \text{and} \ \ \ \bar{k}[\ker(f)]^{\times}/\bar{k}^{\times}=1$$  by \cite[Proposition 6.10, Corollary 6.11]{Sansuc}, one concludes that $\bar{k}[W]^{\times}/\bar{k}^{\times}$ is a free abelian group of rank 1 by the following exact sequence
$$\bar{k}[\ker(f)]^{\times}/\bar{k}^{\times}\to\bar{k}[W]^{\times}/\bar{k}^{\times}\to\textup{Div}_{Z_{\bar{k}}}({\ker(f)}_{\bar{k}})\to\Pic({\ker(f)}_{\bar{k}}) . $$
Hence the homomorphism $\bar{k}[W]^{\times}\xrightarrow{(\cdot)^n} \bar{k}[W]^{\times}$ is not surjective for $n\geq2$. The Kummer sequence implies that $$\H^{1}(W_{\bar{k}},\mu_{n})=\Hom(\pi_{1}(W_{\bar{k}}),\Z/n\Z)$$ is not trivial. This contradicts to the simply connectedness of $W_{\bar k}$. \end{proof}

We summarize the above purity results for $GL_n$. 

\begin{ex}\label{counterexGLn} $GL_n$ satisfies the arithmetic purity with respect to $\Br_1(GL_n)$ off $\infty_k$ of codimension 3 over a number field $k$ (Corollary \ref{alggp2}).

$GL_n$ satisfies the arithmetic purity with respect to $\Br_1(GL_n)$ off $\infty_k$ if and only if $k$ is neither $\Q$ nor an imaginary quadratic number field (Corollary \ref{purity-zo} and Proposition \ref{counterexconnectedgroups}). 
\end{ex}

Now we reduce to study the arithmetic purity with \BMo for tori. We first need the following lemma by refining the argument of Harari\textendash Voloch's example \cite[page 420]{HV10}. 

\begin{lem}\label{Gm-pt}
Let $T$ be a torus over a number field $k$ with $\dim(T)=1$ and $U$ be the complement of a $k$-rational point in $T$. If $T(k)$ is not discrete in $T(\A_k^{\infty_k})$, then $U(k)$ is not dense in $pr^{\infty_k} (U(\A_k)^{\Br(T)})$.
\end{lem}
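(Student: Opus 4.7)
The first step is to reduce to the case $T = \mathbb G_m$. Since $\dim T = 1$, the only alternative is a norm-one torus $R^{(1)}_{K/k}\mathbb G_m$ attached to a quadratic \'etale $k$-algebra $K$, but such a torus is anisotropic and $T(k_\infty)$ is compact, so $T(k)$ (being a discrete cocompact subgroup of $T(\mathbb A_k)$) projects discretely into $T(\mathbb A_k^{\infty_k})$, contradicting the hypothesis. Translating by a $k$-rational point, I may assume $a = 1$ and $U = \mathbb G_m \setminus \{1\}$.

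Pullback along $U \hookrightarrow T$ gives $\Br(T) \subset \Br(U)$ and therefore $U(\mathbb A_k)^{\Br(U)} \subset U(\mathbb A_k)^{\Br(T)}$, so it suffices to produce an ad\`elic point $(x_v) \in U(\mathbb A_k)^{\Br(U)}$ that lies outside the closure $\overline{U(k)}$ in $U(\mathbb A_k^{\infty_k})$; this is exactly the Harari--Voloch obstruction for $U$, which exploits the extra invertible function $t-1 \in \bar k[U]^\times\setminus\bar k[T]^\times$ and the associated Brauer classes $(t-1,\chi)\in\Br(U)$ for $\chi\in H^1(k,\mathbb Q/\mathbb Z)$. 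Concretely I would pick a finite-order character $\chi$, use the non-discreteness of $k^\times$ in $\mathbb A_k^{\infty_k,\times}$ to exhibit a sequence $y_n\in k^\times\setminus\{1\}$ converging off $\infty_k$ to a non-rational limit $\xi\in\overline{k^\times}^{\mathbb A_k^{\infty_k}}$ with $\xi_v\neq 1$ everywhere, and choose archimedean compensators $d_n\in(k_\infty^\times)^0$ so that $y_n d_n$ converges in $\mathbb A_k^\times$ to a point $(x_v)\in U(\mathbb A_k)^{\Br(U)}$. The archimedean tuning is performed so that the finite-tail sum
\[
\sigma \;:=\; \sum_{v\notin\infty_k}\chi\bigl(\mathrm{rec}_v(x_v-1)\bigr)
\]
is non-zero and does \emph{not} lie in the subgroup of $\mathbb Q/\mathbb Z$ attainable as $-\sum_{v\in\infty_k}\chi(\mathrm{rec}_v(y-1))$ when $y$ ranges over elements of $U(k)$ approximating $\xi$ off $\infty_k$ (these form a coset of the units modulo a deep conductor, whose image at the real places is governed by the sign subgroup of $\mathcal O_k^\times$).

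Granted the construction, suppose some $y\in U(k)$ approximates $(x_v)$ off $\infty_k$. By continuity of the local Artin maps and finite order of $\chi$, for sufficiently close approximation one has $\chi(\mathrm{rec}_v(y-1)) = \chi(\mathrm{rec}_v(x_v-1))$ at every finite $v$, hence $\sum_{v\notin\infty_k}\chi(\mathrm{rec}_v(y-1))=\sigma$; global reciprocity applied to $y-1\in k^\times$ then forces $\sum_{v\in\infty_k}\chi(\mathrm{rec}_v(y-1))=-\sigma$, contradicting the arranged unreachability of $-\sigma$ in the archimedean image. The main technical obstacle is precisely this arithmetic tuning: one must leverage the non-discreteness hypothesis to produce a genuine non-rational limit point $\xi$ (so that the purely rational reciprocity $\sigma=0$ can be avoided), and simultaneously control the archimedean data of $(x_v)$ to guarantee $\Br(U)$-orthogonality while forcing $\sigma$ to escape the finite accessible subgroup. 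This is the heart of the refinement of the Harari--Voloch argument cited in \cite{HV10} and requires a delicate comparison between the image of $\chi_{\infty_k}$ and the sign subgroup of $\mathcal O_k^\times$ at the real places of $k$.
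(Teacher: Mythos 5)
Your reduction to $T=\G_m$ is incorrect. For a norm-one torus $T=R^{1}_{K/k}\G_m$ with $K/k$ quadratic, the group $T(k_\infty)$ is compact (hence $T(k)$ discrete in $T(\A_k^{\infty_k})$) only when no archimedean place of $k$ splits in $K$. If some archimedean place does split --- for instance $k$ imaginary quadratic and $K=kF$ for another imaginary quadratic field $F$ --- then $T(k_\infty)$ is non-compact, $T(\O_k)$ has positive rank, and $T(k)$ is \emph{not} discrete in $T(\A_k^{\infty_k})$. The paper applies the lemma to exactly such a norm-one torus in Example~\ref{torus-pt}, so the case you discard is precisely the one that is needed.

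The reciprocity plan also cannot close as written. You intend $(x_v)\in U(\A_k)^{\Br(U)}$ to arise as the off-$\infty_k$ adelic limit of points $y_n\in k^\times\setminus\{1\}=U(k)$; but a limit in the adelic topology of $U(\A_k^{\infty_k})$ lies by definition in the closure of $U(k)$, so no contradiction can emerge from it. The crux of the lemma --- which your proposal does not engage --- is that the adelic topology on $U$ is strictly finer than the one induced from $T$: $v$-integrality on $\mathcal{U}=\mathcal{T}\setminus\{1_{\mathcal{T}}\}$ means avoiding the removed section mod $v$. The paper takes powers $\epsilon^l$ of a non-torsion $\epsilon\in\mathcal{T}(\O_k)$ (which exists because $T(k)$ is not discrete), extracts a limit $(\eta_v)$ in the \emph{$T$}-adelic compact space $\prod_{v\mid\infty}\pi_0(T(k_v))\times\prod_{v\nmid\infty}\mathcal{T}(\O_v)$, and verifies $\eta_v\neq 1$ at every finite $v$; this limit is orthogonal to $\Br(T)$ by continuity, yet it need not lie in the $U$-adelic closure of $U(k)$. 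Non-approximability is then forced by Siegel's theorem (finiteness of $\mathcal{U}(\O_P)$) together with a congruence at an auxiliary degree-one prime $v_0$ that pins any putative rational approximant down to $\epsilon$ itself, which the explicit $v_0$-adic expansion of $\epsilon^{l-1}$ rules out. Without such a finiteness input, the class $(t-1,\chi)$ gives no traction: if $(x_v)\perp\Br(U)$, then $-\sigma$ is already attained by the archimedean coordinates of $(x_v)$, and since the signs of $y-1$ at real places are unconstrained for $y\in U(k)$ approximating $(x_v)$ only at the finite places, nothing prevents $-\sigma$ from being realized by rational points.
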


\begin{proof} Without loss of generality, we can assume that $U=T\setminus\{1_T\}$. Let $\mathcal{T}$ be the connected component of the identity section of the N\'eron lft-model of $T$ over $\O_k$ by \cite[Proposition 6, Chapter 10]{BLR}. Then $\mathcal{T}$ is a smooth group scheme of finite type over $\O_k$ such that $T=\mathcal{T}\times_{\O_k} k$. Since $T(k)$ is not discrete in $T(\A_k^{\infty_k})$, one has $${\rm rk}(T_k) < \sum_{v\in \infty_k} {\rm rk}(T_{k_v})$$ by \cite[Theorem 3.5]{LiuXu15}. Then
there is an element $\epsilon\in \mathcal{T}(\O_k)$ of infinite order by \cite[Theorem 5.12]{PR}. 
Let $$ P= \{v\in \Omega_k\setminus \infty_k:  \epsilon \equiv 1_{\mathcal{T}} \mod v \} \cup \infty_k  \ \ \ \text{and} \ \ \  \mathcal{U}=\mathcal{T}\setminus \{1_{\mathcal{T}}\} $$ where $1_{\mathcal{T}}$ is the identity section. Then $P$ is finite and $$\mathcal{U}(\O_P)=\{x_1=\epsilon,  \cdots, x_n \}$$ is finite by Siegel's Theorem (see \cite[\S 2]{Fal86}). 

By Chebotarev Theorem, one can choose a non-dyadic prime $v_0\in \Omega_k\setminus P$ with degree one over $\Bbb Q$ such that $\mathcal{T}(\O_{v_0})=\O_{v_0}^\times $ and $\epsilon \not\equiv x_i \mod v_0$ for $2\leq i\leq n$. Let $\F_{v_0}$ is the residue field of $\O_k$ at $v_0$.  Then $q=|\F_{v_0}|$ is an odd prime and ${\rm {gcd}}(q(q-1), (q-1)^2+1)=1$.   
Consider the sequence $\{\epsilon^l \}_l$  where $l$ runs over all primes satisfying $$l\equiv (q-1)^2+1  \mod q(q-1) $$ in the compact set $$\prod_{v\in \infty_k} \pi_0(T(k_v)) \times \prod_{v\not\in \infty_k} \mathcal{T}(\O_v) $$ where $\pi_0(T(k_v))$ is the set of connected components of the Lie group $T(k_v)$ for $v\in \infty_k$. Then there exists a subsequence converging to an element $(\eta_{v})$. Since $\epsilon^l \perp \Br(T)$ for all primes $l$, one has $(\eta_v)\perp \Br(T)$.

We claim that $\eta_v \in \mathcal{U}(\O_v)$ for all $v\not\in P$.  Otherwise, $\epsilon^l \equiv 1 \mod v$ holds for infinitely many primes $l$. Then $\epsilon\equiv 1 \mod v$ which is a contradiction. For $v\in P\setminus \infty_k$, we claim that $\eta_v\neq 1$. Indeed, since $\epsilon \neq 1$, there is a positive integer $r$ such that $\epsilon \not \equiv 1 \mod v^r$. Suppose that $\eta_v=1$. Then there are infinitely many primes $l$ such that $\epsilon^l \equiv 1 \mod v^r$. This implies that $\epsilon \equiv 1 \mod v^r$. A contradiction is derived.  Since $U(k_v)$ is dense in $T(k_v)$ for each $v\in \infty_k$, one concludes that $$(\eta_v)_{v\not\in \infty_k} \in pr^{\infty_k} [ (\prod_{v\in P} U(k_v))\times (\prod_{v\not\in P} \mathcal{U}(\O_v))]^{\Br(T)} . $$ 

Suppose $U(k)$ is dense in $pr^{\infty_k} (U(\A_k)^{\Br(T)})$. Then $$\mathcal{U}(\O_P) =\overline{\mathcal{U}(\O_P)}= pr^{\infty_k}[(\prod_{v\in P} U(k_v))\times (\prod_{v\not\in P} \mathcal{U}(\O_v))]^{\Br(T)} .$$ 
Since $\epsilon^l \equiv \epsilon \mod v_0 $ by the choice of $l$, one obtains that $(\eta_v)_{v\not \in \infty_k} \neq x_i$ for $2\leq i\leq n$. Therefore $(\eta_v)_{v\not\in \infty_k}=\epsilon$. Write $\epsilon=\zeta+ a\pi_{v_0}^s$ where $\pi_{v_0}$ is a prime element of $k_{v_0}$, $\zeta$ is a root of unity of order dividing $q-1$, $a\in \O_{v_0}^\times$ and $s$ is a positive integer. Then
$$ \epsilon^{l-1}=(1+a\zeta^{-1} \pi_{v_0}^s)^{l-1}\equiv 1+ a \zeta^{-1} \pi_{v_0}^s \ \mod \pi_{v_0}^{s+1} . $$  
This contradicts that $\epsilon^{l-1} \to 1$ for the primes $l$ in the above convergent subsequence. 
\end{proof}

The following example explains that the arithmetic purity for tori is not true for arbitrarily large codimension in general (see Remark \ref{counter-example-n}).

\begin{ex}\label{torus-pt} Let $F$ be an imaginary quadratic number field and $R^{1}_{F/\Q}\G_{m}$ be a norm one torus over $\Q$. Suppose that $X$ is the complement of a $\Q$-rational point in $R^{1}_{F/\Q}\G_{m}\times_{\Q} \G_{m}$. Then
\begin{itemize}
\item[-] $X_k$ satisfies strong approximation with \BMo off $\infty_{k}$ but does not satisfy Zariski open strong approximation with \BMo off $\infty_k$ if $k=\Q$ or $F$;
\item[-] $X_{k}$  does not satisfy strong approximation with \BMo off $\infty_{k}$ if k is a totally real  number field other than $\Q$ or an imaginary quadratic field other than $F$.
\end{itemize}
\end{ex}

\begin{proof} When $k=\Q$ or $F$, one has that $X_k$ satisfies strong approximation with \BMo off $\infty_{k}$ by \cite[Corollary 3.6 and Example 3.7]{LiuXu15}. In fact, one obtains that $X_k(k)$ is open and closed in $pr^{\infty_k}(X(\A_k))$ by \cite[Theorem 3.5]{LiuXu15}. Let $U=X_k\setminus \{P\}$ with $P\in X_k(k)$. Then $U$ is a Zariski open dense subset of $X$ but $U(k)$ is also open and closed in $pr^{\infty_k}(X(\A_k))$. The first statement follows.

For the second one, one considers the projection $X_k \to Y_k$ where 
$$ Y_k= \begin{cases} R^{1}_{kF/k}(\G_m) \ \ \ & \text{ if $k$ is a totally real number field other than $\Q$ ;} \\
 \G_{m,k} \ \ \ & \text{ if $k$ is an imaginary quadratic field other than $F$.} \end{cases} $$
  The result follows from \cite[Theorem 5.12]{PR}, \cite[Theorem 3.5 or Example 3.7]{LiuXu15}, Corollary \ref{keylem} and Lemma \ref{Gm-pt}.
\end{proof}

\begin{rem} Example \ref{torus-pt} explains that some geometrically rational open surfaces satisfy strong approximation with \BMo over the ground field but fail to satisfy strong approximation with \BMo over some finite extension of the ground field. 
\end{rem}

\begin{rem} \label{counter-example-n} One can also have the counter-examples of tori which do not satisfy arithmetic purity with \BMo of any codimension $n$ by modifying $X=((R^{1}_{F/\Q}\G_{m})^{n-1}\times_{\Q} \G_{m})\setminus\{1\}$ in Example \ref{torus-pt}. Then $X_k$ over any totally real field $k$ other than $\Q$ does not satisfy strong approximation with \BMo off $\infty_k$.  \end{rem}

\begin{rem} As pointed out in Remark \ref{Br_1=Br}, Proposition \ref{purity-descent}, Corollary \ref{alggp2}, Corollary \ref{purity-zo} and Example \ref{counterexGLn}
are also true with respect to $\Br(\cdot)$.
\end{rem}

\section{Arithmetic purity with \BMo for partial equivariant smooth compactifications of homogeneous spaces}\label{psc-homog}

In this section, we give a proof of Theorem \ref{maintheorem} using the previous results and \cite{Cao-homog}. 
The following lemma is an immediate consequence of \cite[Theorem 2.2]{PR}.

\begin{lem}\label{Zariski-dense-image} If $ G_1 \xrightarrow{f} G_2$ is a surjective homomorphism of connected linear algebraic groups over a field $k$ of characteristic 0, then $f(G_1(k))$ is Zariski dense in $G_2$.
\end{lem}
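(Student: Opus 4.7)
The plan is to reduce this to the assertion that $k$-points are Zariski dense in a connected linear algebraic group over a characteristic $0$ field, which is precisely \cite[Theorem 2.2]{PR}. So the first step is to invoke that theorem to obtain that $G_1(k)$ is Zariski dense in $G_1$.

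Next, let $Z$ denote the Zariski closure of $f(G_1(k))$ in $G_2$, viewed as a reduced closed subscheme. Because $f$ is a morphism of $k$-varieties, the preimage $f^{-1}(Z)$ is a Zariski closed subset of $G_1$, and by construction it contains $G_1(k)$. By the density obtained in the first step, this forces $f^{-1}(Z) = G_1$, i.e.\ $f(G_1) \subseteq Z$. Now I use the hypothesis that $f$ is surjective as a homomorphism of algebraic groups (equivalently, surjective as a morphism of $k$-schemes): this gives $f(G_1) = G_2$, whence $Z = G_2$, as desired.

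There is no real obstacle here; the only subtlety worth flagging is distinguishing scheme-theoretic surjectivity of $f$ from surjectivity on $k$-points. We do \emph{not} know in general that $f: G_1(k) \to G_2(k)$ is surjective (it need not be, even in characteristic $0$, because of possible obstructions lying in $H^1(k, \ker f)$), but we never use that; we only need that the morphism of varieties $f$ is dominant, which is immediate from surjectivity as a homomorphism of algebraic groups. Thus the lemma follows in two short lines from \cite[Theorem 2.2]{PR}.
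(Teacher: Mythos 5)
Your proof is correct and takes essentially the same approach as the paper: both arguments reduce the claim to the Zariski density of $G_1(k)$ in $G_1$ via \cite[Theorem 2.2]{PR} and then use surjectivity of $f$ as a morphism of schemes. The paper phrases this dually, intersecting $G_1(k)$ with the (nonempty) preimage of an arbitrary nonempty open subset of $G_2$, whereas you work with the Zariski closure of the image; the two formulations are equivalent.
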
 
\begin{proof} Let $W\neq \emptyset$ be an open subset of $G_2$. Since $f$ is surjective, one has $f^{-1}(W)\neq \emptyset$. By \cite[Theorem 2.2]{PR}, one concludes that 
$$ f^{-1}(W) \cap G_1(k) \neq \emptyset . $$ This implies that $f(G_1(k))\cap W\neq \emptyset$ as desired. 
\end{proof}

First, we need the following result.

\begin{prop}\label{hom-inv} Let $G$ be a connected linear algebraic group over a number field $k$ and $S\neq \emptyset$ be a finite subset of $\Omega_k$. Suppose that $H$ is a closed subgroup of $G$ over $k$ such that the induced restriction map $G^* \to H^*$ is injective and $G^{\textup{sc}}$ satisfies the arithmetic purity off $S$. Assume that $U$ is an open subset of $G$ over $k$ such that $ \codim(G\setminus U, G)\geq 2$ and $ U H= U$. If $W$ is an open dense subset of $U$, then $W(k) \cdot H(k_\infty)^0$ is dense in $pr^{S}(U(\A_{k})^{\Br_1(G)})$ where $H(k_\infty)^0$ is the connected Lie subgroup of finite index in $H(k_\infty)$. 
\end{prop}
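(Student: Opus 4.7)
The plan is to fibre $G$ over its toric quotient $q : G \to G^{\textup{tor}}$, combine arithmetic purity of the fibres with Harari's theorem on strong approximation for tori with \BMo, and exploit the action of $H(k_\infty)^0$ to absorb the archimedean Brauer--Manin defect on the toric base.

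I would first observe that the injectivity $G^* \hookrightarrow H^*$ forces the composition $H\hookrightarrow G\xrightarrow{q} G^{\textup{tor}}$ to be surjective: since $G^\u$ and $G^{\textup{sc}}$ carry no non-trivial characters, one has $G^*=G^{\textup{tor}*}$, and any proper subtorus image would produce a non-trivial character of $G^{\textup{tor}}$ dying on $H$. Hence $H(k_\infty)^0$ surjects onto $G^{\textup{tor}}(k_\infty)^0$ by \cite[Theorem 6.14]{PR}. Applying Proposition \ref{fiber-codim} twice to $q|_U$, one obtains an open dense $V\subseteq G^{\textup{tor}}$ such that for each $t\in V(k)$ the fibre $F_t := q^{-1}(t)\cap U$ has complement of codimension $\geq 2$ in $q^{-1}(t)$, and $W\cap F_t$ is dense in $F_t$. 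After reducing to the structured case $G=G^\u\times_k G^{\textup{tor}}\times_k G^{\textup{sc}}$ via Theorem \ref{iso} (in which case $\Pic(G_{\bar k})=0$ and hence $\Br_1(G)=\Br_0(G)$), each fibre $q^{-1}(t)$ is a translate of $G^\u\times_k G^{\textup{sc}}$, which satisfies arithmetic purity off $S$ by the hypothesis on $G^{\textup{sc}}$, Proposition \ref{purity-prod} and \cite[Proposition 3.6]{CaoXuToric}; consequently $F_t$ itself satisfies strong approximation off $S$.

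Given $(x_v)\in U(\A_k)^{\Br_1(G)}$ and an open neighbourhood $\mathcal{N}$ of $pr^S((x_v))$, functoriality of the Brauer--Manin pairing produces $q((x_v))\in G^{\textup{tor}}(\A_k)^{\Br_1(G^{\textup{tor}})}$. By \cite[Theorem 2]{Harari08}, one finds $t\in V(k)$ and $\eta\in G^{\textup{tor}}(k_\infty)^0$ with $t\cdot\eta$ approximating $q((x_v))$ off $S$. Lifting $\eta$ to $h\in H(k_\infty)^0$ via the surjection established above, the adelic point $(x_v)\cdot h^{-1}$ is close to the fibre $F_t$ at all places off $S$. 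Strong approximation of $F_t$ then yields $u\in F_t(k)$ in a suitable neighbourhood of $(x_v)\cdot h^{-1}$, and a final refinement using the density of $W\cap F_t$ in $F_t$ together with \cite[Lemma 3.2]{PR} allows one to replace $u$ by an element of $W(k)$ without disturbing the approximation. Then $u\cdot h\in W(k)\cdot H(k_\infty)^0$ lies in $\mathcal{N}$, as required.

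The main obstacle will be executing the reduction to the structured product form so that $U$, $W$, and $H$ are transported compatibly (in particular, keeping the injectivity $G^* \hookrightarrow H^*$ intact so that the lifted $H$ still surjects to the lifted torus and so that the lifted $U$ retains its codimension condition after possibly translating by a rational point supplied by Theorem \ref{iso}), and then arranging the final refinement from $U(k)$ into $W(k)$ without breaking the approximation off $S$ already secured through the strong approximation of $F_t$.
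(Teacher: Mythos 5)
Your plan captures the spirit of the paper's Step~1 (fibre over a torus, approximate on the base via \cite[Theorem 2]{Harari08}, use arithmetic purity on the fibres, and absorb the archimedean defect through $H(k_\infty)^0$), but there are genuine gaps in the reduction you wave at and two smaller slips.

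The central gap is the one you flag yourself as ``the main obstacle.'' There is in general no surjective homomorphism of algebraic groups from the direct product $G^\u\times_k G^{\textup{tor}}\times_k G^{\textup{sc}}$ onto $G$, so a single invocation of Theorem~\ref{iso} cannot place you in that structured case; indeed $G$ sits in a semidirect product over its unipotent radical, and the isogeny from $G^{\textup{sc}}\times_k R(G^{\red})$ goes onto $G^{\red}$, not onto $G$. The paper therefore executes the reduction in two separate stages: for the finite isogeny $\pi:G^{\textup{sc}}\times_k R(G^{\red})\to G^{\red}$, one needs Theorem~\ref{iso} \emph{together with} a commutative diagram of character groups to verify that the injectivity $G^*\hookrightarrow H^*$ pulls back to $(G^{\textup{sc}}\times R(G^{\red}))^*\hookrightarrow \pi^{-1}(H)^*$, and one must also translate $U$ and $W$ by a rational point to land in the right torsor; for the unipotent quotient $\phi:G\to G^{\red}$, one cannot pretend $G$ is a product but rather pushes $U,W,H$ forward, applies the reductive case to $\phi(U)$, and then comes back using $H^1(k_v,G^\u)=0$ and the fact that the fibres of $\phi$ over rational points are affine spaces. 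These verifications are the substance of Proposition~\ref{hom-inv}; without them the argument is really only a proof of the special case $G=G^{\textup{sc}}\times_k T$.

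Two further slips worth fixing. First, $\Pic(G_{\bar k})=0$ does \emph{not} imply $\Br_1(G)=\Br_0(G)$ when $G$ has non-trivial characters: for a torus $T$ one has $\Br_1(T)/\Br_0(T)\cong H^2(k,T^*)$, which is usually non-zero. You do not need that claim: the point you actually want, namely that $q((x_v))$ is orthogonal to $\Br_1(G^{\textup{tor}})$, is a direct consequence of functoriality of the Brauer--Manin pairing applied to $q^*$, which is how the paper argues. Second, the ``final refinement'' by which you pass from $u\in F_t(k)$ to a point of $W(k)$ cannot be done with \cite[Lemma 3.2]{PR}, which only gives local density of $W(k_v)$ in $F_t(k_v)$; you cannot move a rational point by a small local perturbation. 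The correct device is Proposition~\ref{ndiff}: since $F_t$ has trivial Brauer group modulo constants and $S\neq\emptyset$, strong approximation for $F_t$ upgrades to Zariski open strong approximation, and one applies that directly to the open dense subset $W\cap F_t$ of $F_t$, which is what the paper does with the open set $(W\cdot h_0)\cap p^{-1}(\xi)$.
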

\begin{proof} We prove this result in the following steps. 

Step 1. We first consider $G=G^{\textup{sc}}\times_k T$, where $G^{\textup{sc}}$ is a semi-simple simply connected linear algebraic group and $T$ is a torus. 

Let $p:  G=G^{\textup{sc}}\times_k T \to T$ be the projection map. Since $G^* =T^* \to H^*$ is injective, the restriction $p_H: H\to T$ is surjective. Since $p$ is faithfully flat, one has $p(U)$ is an open subset of $T$ with $p(U)\cdot T=p(U)$. This implies that $p(U)=T$. 
By Proposition \ref{fiber-codim}, there is an open dense subset $V$ of $T$ such that $$ \codim(p^{-1}(x)\setminus U, p^{-1}(x)) \geq 2$$ for all $x\in V(k)$. 
Let
$$ M= (\prod_{v\in S} U(k_v) ) \times \prod_{v\not\in S} M_v $$ be an open subset of $U(\A_k)$ such that $M \cap U(\A_k)^{\Br_1(G)} \neq \emptyset $. Then $p(M\cdot H(k_\infty)^0)$  is an open subset of $T(\A_k)$ with $$p(M\cdot H(k_\infty)^0)\cap T(\A_k)^{\Br_1(T)} \neq \emptyset$$ by (\ref{sur-int}), functoriality of Brauer-Manin pairing and \cite[Lemma 2.1]{CDX}. Since $p(H(k_\infty)^0)=T(k_\infty)^0$ the connected Lie subgroup of finite index in $T(k_\infty)$, there is $$t_0\in T(k)\cap p(M\cdot H(k_\infty)^0)$$ by \cite[Theorem 2]{Harari08}. 

Let $V_1= p(W) \cap V\neq \emptyset$. There is $h_0\in H(k)$ such that $p(h_0)\in  t_0^{-1}V_1(k)$ by Lemma \ref{Zariski-dense-image}. This implies that $\xi=t_0\cdot p(h_0)\in V_1(k)$. Then 
$$ (M\cdot H(k_\infty)^0 \cdot h_0)\cap p^{-1}(\xi)(\A_k) $$
is a non-empty open subset of $(p^{-1}(\xi)\cap U)(\A_k)$ such that $v$-component of this open subset is $(p^{-1}(\xi)\cap U)(k_v)$ for all $v\in S$. Since $UH=U$, one has that $W\cdot h_0$ is an open dense subset of $U$. Since $G^{\textup{sc}}$ satisfies the arithmetic purity off $S$,  one gets 
$$ [((W\cdot h_0) \cap p^{-1}(\xi))(k)] \cap[(M\cdot H(k_\infty)^0 \cdot h_0)\cap p^{-1}(\xi)(\A_k)] \neq \emptyset  $$ by Proposition \ref{ndiff}.
This implies that $(W(k)\cdot H(k_\infty)^0) \cap M\neq \emptyset$ as desired.

Step 2. We prove that the result holds for $G=G^{\red}$.

By \cite[Theorem 2.4]{PR}, one has a surjective homomorphism $$\pi: \  G^{\textup{sc}}\times_k R(G)  \rightarrow G$$ with a finite kernel, where $G^{\textup{sc}}$ is a simply connected covering of $G^{\ss}$ and $R(G)$ is the solvable radical of $G$. Then the following diagram of exact sequences
 \SelectTips{eu}{12} $$
\xymatrix@C=20pt @R=24pt{
1\ar[r] & G^* \ar[r]\ar[d] & [G^{\textup{sc}} \times_k R(G)]^*  \ar[r]\ar[d] & \ker(\pi)^* \ar[d]^{\cong} \\
1\ar[r] & H^*   \ar[r] & \pi^{-1}(H)^* \ar[r] & \ker(\pi)^* 
}$$ commutes. 
Since the left column in the above diagram is injective, one obtains the middle column in the above diagram is injective as well. 

Let  
$$ M= (\prod_{v\in S} U(k_v) ) \times \prod_{v\not\in S} M_v $$ be an open subset of $U(\A_k)$ such that $M\cap U(\A_k)^{\Br_1(G)} \neq \emptyset$. Take
$$(x_v) \in (M\cap U(\A_k)^{\Br_1(G)} )\subseteq G(\A_k)^{\Br_1(G)} . $$ There are $g\in G(k)$ and $$(y_v)\in [(G^{\textup{sc}}\times_k R(G))(\A_k)]^{\Br_1(G^{\textup{sc}}\times_k R(G))} $$ such that $(x_v)=g\cdot \pi((y_v))$ by Theorem \ref{iso}. This implies that 
$$ \pi^{-1}(g^{-1} M ) \cap [\pi^{-1}(g^{-1} U)(\A_k)]^{\Br_1(\pi^{-1}(g^{-1}U))} \neq \emptyset  . $$
Since $[\pi^{-1}(g^{-1} U)][\pi^{-1}(H)]= \pi^{-1}(g^{-1}U)$ and $$ \codim( (G^{\textup{sc}}\times_k R(G)) \setminus \pi^{-1}(g^{-1}U),  G^{\textup{sc}}\times_k R(G) )\geq 2 , $$ one concludes that $$ [\pi^{-1}(g^{-1}W)(k) \cdot \pi^{-1}(H) (k_\infty)^0] \cap \pi^{-1} (g^{-1}M) \neq \emptyset  $$ by Step 1, where $\pi^{-1}(H) (k_\infty)^0$ is the connected Lie subgroup of finite index in $\pi^{-1}(H) (k_\infty)$. 
Therefore $(W(k)\cdot H(k_\infty)^0) \cap M\neq \emptyset$ as desired. 

Step 3. For a general connected linear algebraic group $G$, one has the following short exact sequence of linear algebraic groups
$$ 1\rightarrow G^{\u} \rightarrow G \xrightarrow{\phi} G^{\red} \rightarrow 1 . $$ Since the following diagram 
\SelectTips{eu}{12} $$
\xymatrix@C=20pt @R=24pt{
 (G^{\red})^* \ar[r]^{\ \ \ \phi^*}_{\ \ \ \cong}\ar[d] & G^* \ar[d]  \\ 
 \phi(H)^*   \ar[r] & H^*}$$
commutes, one obtains that the natural map $(G^{\red})^* \to \phi(H)^*$ is injective by the assumption. Since $\phi$ is faithfully flat, one concludes that $\phi(U)$ is an open subset of $G^{\red}$ with 
$$ \codim(G^{\red}\setminus \phi(U), G^{\red})\geq 2 \ \ \ \text{and} \ \ \ \phi(U) \phi(H)=\phi(U) . $$
By Proposition \ref{fiber-codim}, there is an open dense subset $V$ of $G^{\red}$ such that $$ \codim(\phi^{-1}(x)\setminus U, \phi^{-1}(x)) \geq 2$$ for all $x\in V(k)$. 

Let
$$ M= (\prod_{v\in S} U(k_v) ) \times \prod_{v\not\in S} M_v $$ be an open subset of $U(\A_k)$ such that $M \cap U(\A_k)^{\Br_1(G)} \neq \emptyset $. Then $\phi (M\cdot H(k_\infty)^0)$  is an open subset of $\phi(U)(\A_k)$ with $$\phi (M\cdot H(k_\infty)^0)\cap [\phi(U)(\A_k)^{\Br_1(G^{\red})}] \neq \emptyset$$ by (\ref{sur-int}), functoriality of Brauer-Manin pairing and \cite[Lemma 2.1]{CDX}. Since $H^1(k_v, G^{\u})=\{1\}$ for all $v\in \Omega_k$, one gets that $\phi(U(k_v))=\phi (U)(k_v)$ for all $v\in S$ by \cite[Lemma 3.2]{PR}. Moreover, $\phi (H(k_\infty)^0)= \phi(H)(k_\infty)^0$ the connected Lie subgroup of finite index in $\phi (H)(k_\infty)$. 
There is 
 $$\eta \in (V\cap \phi(W))(k)\cap \phi (M\cdot H(k_\infty)^0)$$ by Step 2. 
 
 Since $$(M\cdot H(k_\infty)^0)\cap \phi^{-1}(\eta)(\A_k) \neq \emptyset \ \ \ \text{and} \ \ \ W\cap \phi^{-1}(\eta)\neq \emptyset , $$ one concludes that 
 $$(W\cap \phi^{-1}(\eta))(k) \cap (M\cdot H(k_\infty)^0) \neq \emptyset $$ as desired by 
\cite[Proposition 3.6]{CaoXuToric} and Proposition \ref{ndiff}.
\end{proof}

Before proving the main results of this section, we recall the definition of the invariant Brauer group of $G$-variety in \cite[Definition 3.1]{Cao-homog}. Let $X$ be a $G$-variety with an action
$G\times_k X \xrightarrow{\rho} X$ over $k$. Define
$$ \Br_G(X)=\{ b\in \Br(X): \ \rho^*(b)-p_X^*(b) \in p_G^*(\Br(G)) \} $$
where $p_X: G\times_k X\rightarrow X$ and $p_G: G\times_k X \rightarrow G$ are the projection maps. When $X$ is a homogeneous space of $G$, one has 
$$ \Br_G(X) = \Br_1(X, G) = \ker ( \Br(X) \to \Br(G_{\bar k})) $$ by \cite[Proposition 3.9]{Cao-homog}. 

\begin{thm}\label{purity-hom} Suppose that $X=G/H$ with $\bar k[X]^\times=\bar k^\times$, where $G$ is a connected linear algebraic group over a number field $k$ and $H$ is a connected closed subgroup of $G$ over $k$. Let $S\neq \emptyset$ be a finite subset of $\Omega_k$. If $G^{\textup{sc}}$ satisfies the arithmetic purity off $S$, then $X$ satisfies the arithmetic purity with respect to $\Br_G(X)$ off $S$.
\end{thm}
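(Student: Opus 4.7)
The plan is to reduce strong approximation on an open subset $U\subseteq X$ with $\codim(X\setminus U,X)\geq 2$ to strong approximation on the preimage $V=\pi^{-1}(U)\subseteq G$ under the quotient map $\pi:G\to X=G/H$, and then invoke Proposition \ref{hom-inv}. First, I would set up the geometry: $V$ is an open right $H$-invariant subset of $G$ satisfying $VH=V$, and $\codim(G\setminus V, G)\geq 2$ since $\pi$ is faithfully flat of relative dimension $\dim H$. The hypothesis $\bar{k}[X]^\times=\bar{k}^\times$ plugged into the Rosenlicht exact sequence
$$1\to \bar{k}[X]^\times/\bar{k}^\times \to G^* \to H^*$$
gives the injectivity of the restriction $G^*\to H^*$ required as a hypothesis of Proposition \ref{hom-inv}.

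The main preparatory step is an adelic descent: for any $(x_v)\in U(\A_k)^{\Br_G(X)}$, I would produce a lift $(g_v)\in V(\A_k)^{\Br_1(G)}$ with $\pi((g_v))$ equal to (or arbitrarily close to) $(x_v)\cdot x_0$ for some $x_0\in X(k)$. Since $H$ is connected and $\Br_G(X)=\Br_1(X,G)$ by \cite[Proposition 3.9]{Cao-homog}, such a lifting is exactly the content of the descent theory for the $H$-torsor $\pi:G\to X$ developed in \cite{Cao-homog} (playing the role that Theorem \ref{iso} plays for group extensions); the condition $\bar{k}[X]^\times=\bar{k}^\times$ ensures that the algebraic part of $\Br_G(X)$ lifts compatibly to $\Br_1(G)$ along $\pi^*$.

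Granted this lifting, take a basic open neighbourhood $M=\prod_{v\in S}U(k_v)\times\prod_{v\notin S}M_v$ of $U(\A_k)$ with $M\cap U(\A_k)^{\Br_G(X)}\neq\emptyset$, pick an adelic point in the intersection, and after translating by an appropriate $k$-rational point of $X$ (using that rational points translate rational points), lift it to $(g_v)\in V(\A_k)^{\Br_1(G)}\cap \pi^{-1}(M')$ for the translated neighbourhood $M'$. Applying Proposition \ref{hom-inv} to $V$ with $W=V$ produces $g\in V(k)$ and $h\in H(k_\infty)^0$ whose product $g\cdot h$ lies in $\pi^{-1}(M')$ away from $S$. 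Since $\pi(g\cdot h)=\pi(g)\in U(k)$ (the right $H(k_\infty)^0$-action is absorbed by the quotient $\pi$), translating back yields a $k$-rational point of $U$ inside $M$, as required.

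The principal obstacle is the descent step: one must argue carefully that any adelic point of $U$ orthogonal to the invariant Brauer group $\Br_G(X)$ admits an adelic lift along $\pi$ which is orthogonal to $\Br_1(G)$, up to a single $k$-rational correction. This is where the connectedness of the stabilizer $H$ and the hypothesis $\bar{k}[X]^\times=\bar{k}^\times$ are essential, and where one invokes the full strength of the Brauer--Manin descent machinery for homogeneous spaces from \cite{Cao-homog}; once this is in hand, the geometric reduction to Proposition \ref{hom-inv} is formal.
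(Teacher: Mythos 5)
Your proposal follows the same architecture as the paper's proof: pull back along the $H$-torsor $\pi\colon G\to X$, apply Proposition \ref{hom-inv} to the preimage of $U$, and push back down. The observation that $\bar k[X]^\times=\bar k^\times$ gives $G^*\hookrightarrow H^*$ via the Rosenlicht sequence is correct and is precisely what the paper uses to satisfy the hypothesis of Proposition \ref{hom-inv}; and the remark that $\pi$ absorbs the $H(k_\infty)^0$-translation, so $\pi(g\cdot h)=\pi(g)\in U(k)$, is exactly the final step.

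The gap is concentrated in the lifting step, which you correctly flag as ``the principal obstacle'' but leave as a black box, and your formulation of the ``rational correction'' is not quite right. What \cite[Corollary 5.11]{Cao-homog} actually gives is that an adelic point of $U(\A_k)^{\Br_G(X)}$ lifts along $\pi_\sigma\colon G^\sigma\to X$, for some cocycle $\sigma\in H^1(k,H)$, to an adelic point of $G^\sigma(\A_k)^{\Br_1(G^\sigma)}$ --- not to $G$ itself. To get back to $G$ one must then argue that the twist $G^\sigma$ is a \emph{trivial} left $G$-torsor over $k$; this follows from \cite[Theorem 5.2.1]{Skbook} (the Brauer--Manin obstruction is the only one to the Hasse principle for torsors under a connected linear algebraic group), since by construction $G^\sigma(\A_k)^{\Br_1(G^\sigma)}\neq\emptyset$. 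A trivialization $G^\sigma\cong G$ then supplies a left translation by some $g\in G(k)$; it is this $g$, not a point $x_0\in X(k)$, that serves as the correction --- indeed the expression ``$(x_v)\cdot x_0$'' with $x_0\in X(k)$ is not meaningful since $X$ carries no group structure. The paper accordingly works with $g(\pi^{-1}(M))$ and $g(\pi^{-1}(U))$ for this $g\in G(k)$, checks that $g(\pi^{-1}(U))H=g(\pi^{-1}(U))$ and that $\codim(G\setminus g(\pi^{-1}(U)),G)\geq 2$ by faithful flatness of $\pi$, and then applies Proposition \ref{hom-inv} with $W=g(\pi^{-1}(U))$, after which your concluding paragraph goes through verbatim.
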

\begin{proof}  Let $U$ be an open subset of $X$ such that $\codim(X\setminus U, X)\geq 2$ and $$ M= (\prod_{v\in S} U(k_v) ) \times \prod_{v\not\in S} M_v $$ of $U(\A_k)$ such that $M\cap U(\A_k)^{\Br_G(X)} \neq \emptyset$. There is $\sigma\in \H^1(k, H)$ such that 
 $$ M\cap \pi_\sigma ( G^\sigma(\A_k)^{\Br_1(G^\sigma)}) \neq \emptyset $$ 
by \cite[Corollary 5.11]{Cao-homog}, where $\pi_\sigma: G^\sigma\to X$ is the twist of the quotient map $\pi: G\to X$ by $\sigma$. Since $G^\sigma$ is a left torsor under $G$ over $k$, one has that $G^\sigma$ is a trivial torsor under $G$ over $k$ by \cite[Theorem 5.2.1]{Skbook}. This implies that there is $g\in G(k)$ such that 
$$ g (\pi^{-1}(M)) \cap (g(\pi^{-1}(U))(\A_k))^{\Br_1(G)} \neq \emptyset . $$  
Since $\pi: G\to X$ is faithfully flat, one has $$\codim(G\setminus g(\pi^{-1}(U)), G)\geq 2 . $$ Moreover, the natural map $G^* \to H^*$ is injective by $\bar k[X]^\times =\bar k^\times$ and $$g(\pi^{-1}(U)) H =g (\pi^{-1}(U)) .$$  Therefore 
$$[g(\pi^{-1}(U))(k) \cdot H(k_\infty)^0] \cap g(\pi^{-1}(M) )\neq \emptyset $$ by Proposition \ref{hom-inv}. Therefore $U(k) \cap M\neq \emptyset$ as desired. 
\end{proof}

The following example provides a partial answer to Wittenberg's question in \cite[Problem 6]{AIM14}.

\begin{ex} If $q(x_1,x_2,x_3,x_4)=x_1x_2+x_3x_4$, then the variety $X$ defined by the equation $q(x_1,x_2,x_3,x_4)=c$ with $c\in k^\times$ satisfies arithmetic purity off 
any non-empty finite subset $S$ of $\Omega_k$.
\end{ex}
\begin{proof} Fix a rational point $\delta_0=(1, c, 0, 0)$. Then $X$ is isomorphic to the homogeneous space $Spin(q)/Stab(\delta_0)$ where 
$$Stab(\delta_0)= \{g\in Spin(q): g \delta_0 =\delta_0 \}$$ is a spin group of the three dimensional non-degenerated quadratic space. This implies that  $$\Br(X)=\Br_G(X)=\Br(k)$$ \cite[Proposition 2.6 and Proposition 2.10]{CTXu09} and \cite[Proposition 3.9]{Cao-homog}. 
Since $q$ is split, the result follows from Theorem \ref{purity-hom}.   
\end{proof}

Based on Theorem \ref{purity-hom}, we complete the proof of Theorem \ref{maintheorem} by using the results in \cite{Cao-homog}. 

\begin{thm}\label{purity-com-hom} Suppose $X$ is a smooth and geometrically integral $G$-variety containing a Zariski open dense $G$-orbit $Z\cong G/H$ over a number field $k$, where $G$ is a connected linear algebraic group and $H$ is a connected closed subgroup of $G$ over $k$. Let $S\neq \emptyset$ be a finite subset of $\Omega_k$. If $\bar{k}[X]^\times=\bar{k}^\times$ and $G^{\textup{sc}}$ satisfies the arithmetic purity off $S$, then $X$ satisfies the arithmetic purity with respect to $\Br_G(X)$ off $S$. 
\end{thm}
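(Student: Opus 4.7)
Let $U \subseteq X$ be open with $\codim(X\setminus U, X) \geq 2$, let $Z \cong G/H$ denote the open dense $G$-orbit, and set $U_0 := U \cap Z$. Since $Z$ is open in $X$, $U_0$ is Zariski open dense in $U$, and because $X\setminus U$ has codimension $\geq 2$ in $X$, also $\codim(Z\setminus U_0, Z) \geq 2$. My plan is to pass from $U$ to $U_0$ via Proposition~\ref{ext}, lift $U_0$ through a quasi-trivial torus torsor to an open subset of an auxiliary homogeneous space whose ring of global invertible functions is trivial, and finally apply Theorem~\ref{purity-hom}.

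For the first step I will invoke Proposition~\ref{ext}(1). Smoothness of $X$ together with $\codim(X\setminus U, X)\geq 2$ and the hypothesis $\bar k[X]^\times = \bar k^\times$ gives $\bar k[U]^\times = \bar k^\times$, while finite generation of $\Pic(U_{\bar k})$ is inherited from $\Pic(X_{\bar k})$ by the same codimension argument and the standard Sansuc exact sequences applicable to a partial equivariant smooth compactification. Restriction sends $\Br_G(X)$ into a subgroup $B \subseteq \Br(U)$; using the computation of $\Br_G$ from \cite[Proposition~3.9]{Cao-homog} and the fact that the boundary $X\setminus Z$ is a finite union of $G$-stable divisors together with a codimension $\geq 2$ locus, one checks that $\Br_1(U)\subseteq B$ with $[B:\Br_1(U)]<\infty$. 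Proposition~\ref{ext}(1) then reduces the problem to showing that $U_0$ satisfies strong approximation with respect to $B$ off $S$.

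Because $Z$ may fail to satisfy $\bar k[Z]^\times = \bar k^\times$, Theorem~\ref{purity-hom} cannot be applied directly to $U_0\subseteq Z$. I will instead construct, in the spirit of the descent approach of \cite{Cao-homog}, a central extension of connected linear $k$-groups
\[
1 \longrightarrow T \longrightarrow \tilde G \longrightarrow G \longrightarrow 1
\]
with $T$ a quasi-trivial torus and a connected closed subgroup $\tilde H\subseteq \tilde G$ lifting $H$, such that $\tilde Z := \tilde G/\tilde H$ is a $T$-torsor over $Z$ with $\bar k[\tilde Z]^\times = \bar k^\times$; moreover $\tilde G^{\textup{sc}} = G^{\textup{sc}}$, which satisfies the arithmetic purity off $S$ by hypothesis. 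Let $\tilde U_0 \subseteq \tilde Z$ be the pullback of $U_0$ under the torsor projection; by faithful flatness, $\codim(\tilde Z\setminus \tilde U_0, \tilde Z)\geq 2$. Applying Theorem~\ref{purity-hom} to $\tilde Z = \tilde G/\tilde H$ will then yield that $\tilde U_0$ satisfies strong approximation with respect to $\Br_{\tilde G}(\tilde Z)$ off $S$.

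It remains to transfer this conclusion back to $U_0$. I will descend along the quasi-trivial torus torsor $\tilde Z\to Z$ in the spirit of Theorem~\ref{iso}: adelic orthogonality to $\Br_{\tilde G}(\tilde Z)$ upstairs corresponds, up to $Z(k)$-translations, to adelic orthogonality to $B$ downstairs, which transfers strong approximation from $\tilde U_0$ to $U_0$; combined with the first step this finishes the proof. The hardest part will be the construction of the torsor $\tilde Z\to Z$ with $\bar k[\tilde Z]^\times = \bar k^\times$ compatible with the $G$-action inherited from $X$, and the precise Brauer-subgroup matching in the descent step, both of which rely on the detailed descent machinery of \cite{Cao-homog}; the index computation $[B:\Br_1(U)]<\infty$ in Step~1 also requires care via the exact sequences relating $\Br$ and $\Pic$ for partial equivariant smooth compactifications.
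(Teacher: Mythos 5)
Your proposal takes a genuinely different route from the paper, but it contains a gap in the central construction that the argument cannot recover from.

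The key problem is the second step, where you want to construct a quasi-trivial torus torsor $\tilde Z = \tilde G/\tilde H \to Z$ with $\bar k[\tilde Z]^\times = \bar k^\times$. This is impossible whenever $\bar k[Z]^\times \neq \bar k^\times$ (which is exactly the case you are trying to fix): pullback along the torsor projection embeds $\bar k[Z]^\times$ into $\bar k[\tilde Z]^\times$, and in fact the Sansuc-type exact sequence for a $T$-torsor gives
$$0 \to \bar k[Z]^\times/\bar k^\times \to \bar k[\tilde Z]^\times/\bar k^\times \to T^* \to \Pic(Z_{\bar k}) \to \cdots,$$
so lifting through a torsor can only \emph{increase} the group of invertible functions, never kill it. Compare $\G_m\times\G_m \to \G_m$, a $\G_m$-torsor which has more invertible functions upstairs, not fewer. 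The paper goes the opposite way: it constructs a surjection $\pi : Z \to T$ with $T^* = \bar k[Z]^\times/\bar k^\times$ (so $Z$ is \emph{fibred over} a torus, not a torsor under one), such that $\pi\circ\phi : G \to T$ is a surjective group homomorphism with connected kernel $G_0$. The \emph{fibres} $\pi^{-1}(t)$ are then homogeneous spaces of $G_0$ with trivial invertible functions, and one selects, via \cite[Corollary 6.13]{Cao-homog} and Proposition~\ref{fiber-codim}, a fibre $\pi^{-1}(\xi)$ meeting the prescribed adelic neighbourhood and satisfying the codimension $\geq 2$ condition for $U$, to which Theorem~\ref{purity-hom} applies directly. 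Descent up a torus torsor and fibration over a torus are dual moves, and you have chosen the one that goes the wrong direction on $\bar k[\cdot]^\times$.

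Two further remarks. First, the initial reduction via Proposition~\ref{ext}(1) is not needed if you set things up as the paper does; the relevant Brauer--Manin compatibility (including that $\Br_G(X) = \Br_G(Z)\cap\Br(X)$) is packaged into \cite[Corollary 6.13]{Cao-homog}, which directly produces an adelic point of $\pi^{-1}(\xi)\cap U$ orthogonal to $\Br_{G_0}(\pi^{-1}(\xi))$ inside the given neighbourhood $M$. Second, even setting aside the invertible-function obstruction, citing Theorem~\ref{iso} ``in spirit'' for the descent step is not sufficient: that statement is for surjective homomorphisms of connected groups, and the transfer of strong approximation along a torus torsor over a homogeneous space needs the separate descent results of \cite{Cao-homog} (or \cite[Theorem 3]{Sk-beyond} suitably adapted), so at minimum that step would require a precise reference and hypothesis check rather than an analogy.
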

  
\begin{proof} Let $T$ be a torus over $k$ such that $T^*=\bar{k}[Z]^\times/\bar{k}^\times$. Then there is a surjective morphism $\pi: Z\to T$ over $k$ such that $\pi\circ \phi: G\to T$ is a surjective homomorphism of algebraic groups and $G_0=\ker(\pi\circ \phi)$ is connected by \cite[Lemma 3.21 and Proposition 3.22]{Cao-homog}, where $\phi : G\to G/H$ is the quotient map. This implies that $G_0^{\ss}=G^{\ss}$ and $H\subseteq G_0$. Note that we also have $G_0^{\textup{sc}}=G^{\textup{sc}}$. 

For any $t\in T(k)$, the fibre $\pi^{-1}(t)$ is a homogeneous space of $G_0$ over $k$ satisfying $$\pi^{-1}(t)_{\bar k} \cong (G_0/H)_{\bar k}$$ over $\bar{k}$. Since $\bar{k}[X]^\times=\bar{k}^\times$, one has that $\bar{k}[\pi^{-1}(t)]^\times=\bar{k}^\times$ by \cite[Proposition 3.13]{Cao-homog}. 

Let $U$ be a Zariski open subset of $X$ with $\codim(X\setminus U, X)\geq 2$ and $$ M= (\prod_{v\in S} U(k_v) ) \times \prod_{v\not\in S} M_v $$ be an open subset of $U(\A_k)$ such that $M\cap U(\A_k)^{\Br_G(X)} \neq \emptyset$. Since $$\Br_G(X)= \Br_G(Z)\cap \Br(X) $$ by \cite[Proposition 3.4(3)]{Cao-homog}, there is $\xi \in T(k)$ such that 
$$ \codim(\pi^{-1}(\xi)\setminus U, \pi^{-1}(\xi))\geq 2  \  \ \text{and}  \ \ (\pi^{-1}(\xi)\cap U)(\A_k)^{\Br_{G_0}(\pi^{-1}(\xi)) }\cap M\neq \emptyset  $$ by \cite[Corollary 6.13]{Cao-homog}. Then $\pi^{-1}(\xi)(k)\neq \emptyset$ by \cite[Theorem 5.2.1]{Skbook}. Applying Theorem \ref{purity-hom} to $\pi^{-1}(\xi)$, one concludes that 
$$ U(k)\cap M\supseteq (U\cap \pi^{-1}(\xi))(k) \cap M \neq \emptyset$$ as desired. 
\end{proof}

\begin{rem} The purity assumption on $G^{\textup{sc}}$ in Proposition \ref{hom-inv}, Theorem \ref{purity-hom} and Theorem \ref{purity-com-hom} holds when $G^{\ss}$ is quasi-split by Theorem \ref{sss-quasi-split}. \end{rem}

\section{Examples}\label{examples}
In this section, we are going to produce some examples for which arithmetic purity for strong approximation does not hold. These examples show that geometric assumptions on $\H^{i}(X_{\bar{k}},\G_{m})(i=0,1)$ in Question \ref{mainquestion} are necessary.

In \cite{Harari08}, Harari showed that semi-abelian varieties satisfy strong approximation with \BMo by assuming the finiteness of Tate-Shafarevich groups. In this section we give examples to explain that abelian varieties do not satisfy arithmetic purity with \BMo even for arbitrarily large codimension.

First we generalise Harari\textendash Voloch's example \cite[page 420]{HV10} as follows.

\begin{thm}\label{elliptic}
Let $E$ be an elliptic curve defined over a number field $k$ such that the Mordell\textendash Weil rank of $E(k)$ is positive. If $E_{0}$ is the complement of a $k$-rational point in $E$, then $E_{0}$ does not satisfy strong approximation with \BMo off $\infty_{k}$.
\end{thm}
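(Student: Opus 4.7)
The plan is to adapt the argument of Lemma \ref{Gm-pt} from $\G_m$ to the elliptic-curve setting. The unit $\epsilon$ is replaced by a point $P\in E(k)$ of infinite order (supplied by the positive rank hypothesis), the sequence $\epsilon^l$ by $lP$, and the local analysis is carried out in the formal group of $E$ in place of $\G_m$. After translating, we may assume $E_0 = E\setminus\{0\}$, where $0\in E(k)$ is the origin. Fix an affine Weierstrass integral model $\mathcal{E}_0$ of $E_0$ over $\O_k$, and define the finite set
$$S_0 \;=\; \infty_k \,\cup\, \{v\notin\infty_k : E \text{ has bad reduction at } v\} \,\cup\, \{v\notin\infty_k : P\equiv 0 \bmod v\}.$$
By Siegel's theorem on integral points of affine curves of positive genus, $\mathcal{E}_0(\O_{S_0})$ is finite; enumerate it as $\{P=x_1, x_2, \ldots, x_n\}$.

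By Chebotarev density, choose a place $v_0\notin S_0$ of degree one over $\Q$ and good reduction, with residue characteristic $p$, such that $x_i\not\equiv P\bmod v_0$ for $2\leq i\leq n$ and such that $p\nmid N$, where $N$ is the order of $P$ in $E(\F_{v_0})$ (the anomalous primes for which $p\mid|E(\F_{v_0})|$ form a density-zero set). Consider primes $l$ satisfying
$$l\equiv 1\pmod N,\qquad l\equiv 1+Np\pmod{p^2},\qquad l \text{ coprime to } p_v \text{ for every } v\in S_0\setminus\infty_k;$$
by $\gcd(N,p)=1$ and Dirichlet's theorem, these form an infinite set. For such $l$, one has $lP\equiv P\pmod{v_0}$ and $m=(l-1)/N$ satisfies $v_0(m)=1$. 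Since $E$ is proper, $\prod_{v\notin\infty_k}E(k_v)$ is compact, so after passing to a subsequence $\{lP\}$ converges to an adelic element $(\eta_v)$. One verifies $\eta_v\neq 0$ in $E(k_v)$ for each finite $v$: for $v\notin S_0$ this is because $lP\not\equiv 0\pmod v$ for all but at most one prime $l$; for $v\in S_0\setminus\infty_k$ our primes are units in $\Z_v$, hence their $\Z_v$-limit acts on $E(k_v)$ as a formal-group automorphism, sending the infinite-order element $P$ to a nonzero element. Thus $(\eta_v)\in E_0(\A_k)$, and since each $lP$ is a $k$-rational point of $E_0$, continuity of Brauer evaluation on each class yields $(\eta_v)\in E_0(\A_k)^{\Br(E_0)}$.

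Assume for contradiction that $E_0(k)$ is dense in $pr^{\infty_k}(E_0(\A_k)^{\Br(E_0)})$; approximating $(\eta_v)$ by $y\in E_0(k)$ in a neighborhood that is $\mathcal{E}_0(\O_v)$ at every $v\notin S_0$ and small at $v_0$ forces $y\in\mathcal{E}_0(\O_{S_0})$, while the congruence $y\equiv\eta_{v_0}\equiv P\bmod v_0$ combined with the choice of $v_0$ pins down $y=P$. The main obstacle, and the place where the elliptic-curve calculation differs genuinely from Lemma \ref{Gm-pt}, is to show $\eta_{v_0}\neq P$ in $E(k_{v_0})$, so that $P$ can be excluded by shrinking the $v_0$-neighborhood. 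For this write $\beta=NP\in E^1(k_{v_0})=\hat E(\m_{v_0})$; since $P$ has infinite order in $E(k_{v_0})$ and $p\nmid N$, the element $\beta$ has infinite order in the formal group and the formal logarithm $b=\log\beta\in\m_{v_0}$ is nonzero. For primes $l$ as above, $lP-P=m\beta$ in $E^1(k_{v_0})$, and passing to the limit gives $\log(\eta_{v_0}-P)=m_\infty b$, where $m_\infty=\lim m\in\Z_{v_0}$ satisfies $v_0(m_\infty)=1$; in particular $m_\infty b\neq 0$, so $\eta_{v_0}\neq P$, delivering the contradiction.
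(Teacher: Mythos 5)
Your proposal is correct and follows the same overall strategy as the paper's proof: replace $\epsilon^l$ by $lP$, use Siegel's theorem to make the set of $S_0$-integral points of $E_0$ finite, use Chebotarev to pick a good auxiliary place $v_0$, extract a convergent subsequence of $\{lP\}$, and then show the adelic limit $(\eta_v)$ cannot be approximated by a point of $\mathcal{E}_0(\O_{S_0})$. Your verification that $\eta_{v_0}\neq P$ via the formal group logarithm (computing $\log(\eta_{v_0}-P)=m_\infty b$ with $v_0(m_\infty)=1$ and $b\neq 0$) plays exactly the role of the paper's invocation of Mattuck's theorem that $E(k_{v_0})$ contains a finite-index subgroup isomorphic to $(\O_{v_0},+)$; these are two presentations of the same local analysis.

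The one substantive place where you deviate, and where your argument invokes a heavier input, is in the choice of $v_0$: you demand $p\nmid N$ (hence in particular that $v_0$ is not anomalous) and justify its existence by appealing to the density-zero result for anomalous primes (Serre's theorem in the non-CM case, CM theory otherwise). The paper avoids this entirely: it allows $v_0$ to be anomalous, and its choice of the residue $a_{v_0}\in\{|\E(\F_{v_0})|+1,\,(q-1)|\E(\F_{v_0})|+1\}$ modulo $q|\E(\F_{v_0})|$ is arranged so that $v_0(a_{v_0}-1)<v_0(q|\E(\F_{v_0})|)$ holds automatically (using only the Hasse bound in the anomalous case), forcing the $q$-adic limit of $l$ to differ from $1$. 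So the paper's proof is self-contained where yours quotes a nontrivial equidistribution theorem; if you want to match the paper's level of elementariness you should either reproduce this trick or at least observe that $v_0(a_{v_0}-1)\leq v_0(|\E(\F_{v_0})|)<1+v_0(|\E(\F_{v_0})|)$ handles both cases. Two smaller remarks: you should impose that $v_0$ is non-dyadic so that $\log$ is injective on $E^1(k_{v_0})$ (the paper does this); and your added congruence ``$l$ coprime to $p_v$ for $v\in S_0\setminus\infty_k$'' is harmless but unnecessary, since for those $v$ one can argue as the paper does that $\eta_v=O$ would force $P\equiv O$ in $\E(\O_v/v^r)$, contradicting $P\neq O$.
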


\begin{proof} Without loss of generality, we may assume that $E_{0}$ is the complement of identity $O\in E(k)$ in $E$. 
Let $\E$ be the N\'eron model of $E$ over $\O_{k}$ and $\E_{0}$ be the complement of identity section in $\E$. 
Fix a rational point $Q\in E(k)$ of infinite order. Then 
 $$P = \{ v\in \Omega_k\setminus \infty_k:  \ Q\equiv O\mod v \} \cup \infty_k $$ is finite. 
By Siegel's Theorem, $\E_{0}(\O_P)=\{x_1=Q, \cdots, x_n\}$ is finite.  

Choose a non-dyadic prime $v_0$ of $k$ with $v_0\notin P$ such that $Q\not\equiv x_i \mod v_0$ for $2\leq i\leq n$ and $E$ has a good reduction at $v_0$. Let $q$ be the characteristic of the residue field $\F_{v_0}$ of $\O_k$ at $v_0$. Since $(|\E(\F_{v_0})|+1)-(|\E(\F_{v_0})|-1)=2$, one obtains that $${\rm gcd} ((|\E(\F_{v_0})|+1), q)=1 \ \ \ \text{or} \ \ \  {\rm gcd} ((|\E(\F_{v_0})|-1), q)=1 . $$ 
Let $$ a_{v_0}=\begin{cases} |\E(\F_{v_0})|+1 \ \ \ & \text{if ${\rm gcd}((|\E(\F_{v_0})|+1), q)=1$} \\
(q-1)|\E(\F_{v_0})|+1 \ \ \ & \text{otherwise.} \end{cases} $$
Let $\pi_0(E(k_v))$ be the set of connected components of the Lie group $E(k_v)$ for $v\in \infty_k$.
Consider a sequence $\{ lQ \}_l$ in the compact set $$\prod_{v\in \infty_k} \pi_0(E(k_v)) \times \prod_{v\notin\infty_{k}}\E(\O_{v}) , $$ where $l$  runs over all primes satisfying $$l \equiv a_{v_0}  \mod q|\E(\F_{v_0})| .$$  There exists a subsequence
converging to an element $(y_{v})_{v}$. 

For $v\not \in P$, one has $y_v\not\equiv O \mod v$. Otherwise, $l Q\equiv O \mod v$ for infinitely many primes $l$. This implies $Q\
\equiv O \mod v$ which is a contradiction. For $v\in P\setminus \infty_k$, one has $y_v \in E_0(k_v)$. Indeed, since $Q$ is not the identity section $O$, there is a positive integer $r$ such that $Q\not\equiv O$ in $\E (\O_v/v^r)$. If $y_v =O$, there are infinitely many primes $l$ such that $l Q \equiv O$ in $\E(\O_v/ v^r)$. This implies $Q\equiv O$ in $\E(\O_v/ v^r)$ and a contradiction is derived. Since $E_0(k_v)$ is dense in $E(k_v)$ for all $v\in \infty_k$, one concludes that 
$$(y_{v})_{v\not\in \infty_k}\in pr^{\infty_k} ([(\prod_{v\in P}E_{0}(k_{v}))\times(\prod_{v\notin P}\E_{0}(\O_{v}))]^{\Br(E)}) . $$ By \citep[Example 2.22(a) in Chapter III]{Milne80} and \citep[Theorem 6.4.4]{GS06}, one has $\Br(E)=\Br(E_0)$. 

Suppose $E_{0}(k)$ is dense in $pr^{\infty_k}(E_0(\A_k)^{\Br(E_0)})$. Then 
$$ \E_0(\O_P) =\overline{\E_0(\O_P)} = pr^{\infty_k}[(\prod_{v\in P}E_{0}(k_{v}))\times(\prod_{v\notin P}\E_{0}(\O_{v}))]^{\Br(E_0)} .$$
Since $l Q\equiv Q \not\equiv x_i \mod v_0$ for $2\leq i\leq n$ by the choice of $l$, one concludes that $Q=(y_v)_{v\not\in \infty_k}$. Namely,  $(l-1)Q\to O$ for the primes $l$ in the above convergent subsequence. On the other hand,  $E(k_{v_0})$ contains a subgroup of finite index which is isomorphic to $(\O_{v_0},+)$ as topological groups by \cite[Theorem 7]{Mattuck}. There exists a positive integer $N$ such that $$N\cdot Q \in \O_{v_0} \ \ \ \text{and} \ \ \ N\cdot Q\neq 0 . $$ This implies that $l\to 1$ in $\O_{v_0}$ for the above primes $l$. A contradiction is derived.
\end{proof}

\begin{cor}\label{abelianvar}
Let $E$ be a semi-abelian variety with $dim(E)=1$ over a number field $k$ such that $E(k)$ is not discrete in $E(\A_k^{\infty_k})$. If $A$ is a non-trivial semi-abelian variety over $k$ such that $A(k)$ is discrete in $A(\A_k^{\infty_k})$, then the complement of a $k$-rational point in $A\times_k E$ does not satisfy strong approximation with \BMo off $\infty_k$.
\end{cor}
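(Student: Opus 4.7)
My plan is to reduce the corollary to the already-established Theorem \ref{elliptic} and Lemma \ref{Gm-pt} via the fibration/descent argument of Corollary \ref{keylem}. Assume for contradiction that $V := (A \times_k E) \setminus \{(a_0, e_0)\}$ satisfies strong approximation with \BMo off $\infty_k$. Applying Corollary \ref{keylem} with $X = E$, $Y = A$, $x = e_0$, $y = a_0$ and $S = \infty_k$: the hypothesis that $A(k)$ is discrete in $A(\A_k^{\infty_k})$ provides exactly the openness of $\{a_0\}$ in $A(k)$ required by that corollary. We conclude that $E_0 := E \setminus \{e_0\}$ satisfies strong approximation with respect to $\im(\Br(E) \to \Br(E_0))$ off $\infty_k$.

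Since $E$ is a semi-abelian variety with $\dim(E) = 1$, either $E$ is a one-dimensional torus or an elliptic curve. When $E$ is a one-dimensional torus, the hypothesis that $E(k)$ is not discrete in $E(\A_k^{\infty_k})$ places us in the setting of Lemma \ref{Gm-pt}; its conclusion says that $E_0(k)$ is not dense in $pr^{\infty_k}(E_0(\A_k)^{\Br(E)})$, which (interpreting $\Br(E)$ via restriction as $\im(\Br(E) \to \Br(E_0))$) directly contradicts what Corollary \ref{keylem} yielded.

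When $E$ is an elliptic curve, non-discreteness of $E(k)$ forces $E(k)$ to be infinite, hence of positive Mordell--Weil rank by the Mordell--Weil theorem. Theorem \ref{elliptic} then asserts that $E_0$ does not satisfy strong approximation with \BMo off $\infty_k$. Combining this with the identification $\Br(E_0) = \Br(E)$ used in the proof of that theorem, we get $\im(\Br(E) \to \Br(E_0)) = \Br(E_0)$, so Theorem \ref{elliptic}'s conclusion again contradicts the one drawn from Corollary \ref{keylem}. In either geometric case we reach a contradiction, completing the proof.

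There is no real obstacle here; the corollary is essentially a packaging of the two preceding results. The only points requiring care are the correct assignment of the roles in Corollary \ref{keylem} (it is $A$, not $E$, that should play the role of the discrete factor $Y$, so that the conclusion concerns $E_0$) and the verification that the property needed to trigger Theorem \ref{elliptic} or Lemma \ref{Gm-pt} matches the non-discreteness hypothesis on $E(k)$ in each of the two possible structures of $E$.
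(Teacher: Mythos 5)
Your proof is correct and follows exactly the same route as the paper, which cites precisely Corollary~\ref{keylem}, Theorem~\ref{elliptic}, and Lemma~\ref{Gm-pt}. The only details the paper leaves implicit — the role assignment $X=E$, $Y=A$ in Corollary~\ref{keylem}, the case split on whether the one-dimensional semi-abelian variety $E$ is a torus or an elliptic curve, and the deduction of positive Mordell--Weil rank from non-discreteness — are spelled out correctly in your argument.
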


\begin{proof}
The result follows from Corollary \ref{keylem} and Theorem \ref{elliptic} and Lemma \ref{Gm-pt}.
\end{proof}

Let us write down some explicit examples. Let $k=\Q(\sqrt{-5})$ and $E_{i}(i=1,2)$ be the elliptic curves defined over $\Q$ by the equations
$$E_{1}:\mbox{ }\mbox{ }\mbox{ }y_{1}^{2}=x_{1}^{3}-x_{1},$$
$$E_{2}:\mbox{ }\mbox{ }\mbox{ }y_{2}^{2}=x_{2}^{3}-4x_{2}.$$
The vanishing orders at $s=1$ of $L$-functions of  $E_i$ and their quadratic twists $E_{i}^{(-5)}$ have been calculated, which are at most $1$, by the work of Tian, Yuan and Zhang \cite{TYZ}. As a consequence of Gross\textendash Zagier's work in \cite{GrossZagier} and Kolyvagin's work in \cite{Kolyvagin}, the Tate\textendash Shafarevich groups $$\sha(E_{i},\Q) \ \ \ \text{ and }  \ \ \  \sha(E_{i}^{(-5)},\Q)$$  are finite for $i=1,2$.  One also deduces that $E_{1}(\Q)$, $E_{2}(\Q)$, and $E_{2}(k)$ are all finite, whereas $E_{1}(k)$ is of rank $1$.  With some more effort, one also deduces the finiteness of $$\sha(E_{i},k)=\sha(R_{k/\Q}E_{i,k},\Q)$$ by  \cite[Example 1]{Milne72} and  \cite[Lemma I.7.1(b)]{MilneADT}.

For any positive integer $n$, the abelian variety $A=E_{2}^{n-1}\times E_{1}$ over $\Q$ satisfies strong approximation with \BMo off $\infty_{\Q}$ by the Cassels\textendash Tate dual exact sequence. Then $A\setminus\{O\}$ satisfies strong approximation with \BMo off $\infty_{\Q}$ by \cite[Proposition 3.1]{LiuXu15}. However, this cannot be preserved after base change to $k$. Indeed, $A_{k}$ satisfies strong approximation with \BMo off $\infty_{k}$, but $A_{k}\setminus\{O\}$ does not satisfy strong approximation with \BMo off $\infty_{k}$ as indicated in Corollary \ref{abelianvar}.

\bigskip

\noindent\textbf{Acknowledgements.} We would like to thank J.-L. Colliot-Th\'el\`ene and Olivier Wittenberg for their useful comments on the original version of this paper. We would also like to thank Philippe Gille and Ye Tian for helpful discussion. The first named author acknowledges the support of the French Agence Nationale de la Recherche (ANR) under reference ANR-12-BL01-0005 and the third named author acknowledges the support of  NSFC grant no.11471219 and no.11631009. 

\bigskip

%\nocite{*} %%show all the bib

\bibliographystyle{alpha}

\begin{thebibliography}{Gro}

\bibitem[AIM14]{AIM14} Amer. Inst. Math.: \emph{AIM OPEN PROBLEM SESSION}, available at {http://aimath.org/pastworkshops/ratlhigherdimvarproblems.pdf},
 2014.
 



\bibitem[BD13]{BD13} M. Borovoi and C. Demarche: \emph{Manin obstruction to strong approximation for homogeneous spaces}, 
Commentarii Mathematici Helvetici {\bf 88} (2013), 1-54.

\bibitem[BLR]{BLR} S. Bosch, W. L\"utkebohmert and M. Raynaud: \emph{N\'eron models}, Ergebnisse der Math.  3. Folge-Band 21, Spring-Verlag, 1990.



\bibitem[BS]{BS} T. D. Browning and D. Schindler: \emph{Strong approximation and a conjecture of Harpaz and Wittenberg}, arXiv: 1509.07744  (2015). 

\bibitem[C16]{Cao-homog} Y. Cao: \emph{Approximation forte pour les vari\'et\'es avec une action d'un groupe lin\'eaire}, Compositio Mathematica {\bf 154} (2018) 773-819.

\bibitem[CDX]{CDX} Y. Cao, C. Demarche and F. Xu: \emph{Comparing descent obstruction and Brauer-Manin obstruction for open varieties}, arXiv: 1604.02709 (2016), to appear in Trans. Amer. Math. Soc..

\bibitem[CX]{CaoXuToric} Y. Cao and F. Xu: \emph{Strong approximation with Brauer-Manin obstruction for toric varieties}, arXiv: 1311.7655 (2013), to appear in  Ann. Inst. Fourier (Grenoble).

\bibitem[CX1]{CaoXuGroupic} Y. Cao and F. Xu: \emph{Strong approximation with Brauer-Manin obstruction for groupic varieties}, arXiv: 1507.04340 (2015), to appear in Proc. London Math. Soc..



\bibitem[CT03]{CT03} J.-L. Colliot-Th\'el\`ene: \emph{Higher Dimensional Varieties and Rational Points}, Bolyai Society Mathematical Series {\bf{12}} (2003), 
Points rationnels sur les fibrations, edited by B{\"o}r{\"o}czky and Koll\'ar and Szamuely,  Springer-Verlag,  171-221.

\bibitem[CT08]{CT08} J.-L. Colliot-Th\'el\`ene: \emph{R\'esolutions flasques des groupes lin\'eaires connexes}, J. reine angew. Math. {\bf 618} (2008), 77-133.


\bibitem[CTH16]{CTH16} J.-L. Colliot-Th\'el\`ene and D. Harari: \emph{Approximation forte en famille}, J. reine angew. Math. {\bf 710} (2016), 173-198.


\bibitem[CTS87] {CTS87} J.-L. Colliot-Th\'el\`ene and J.-J. Sansuc: \emph{Principal homogeneous spaces under flasque tori, applications}, Journal of Algebra {\bf 106} (1987), 
148-205.

\bibitem[CPS]{CTPSk} J.-L. Colliot-Th\'el\`ene, A. P\'al  and A. Skorobogatov: \emph{Pathologies of the {B}rauer-{M}anin obstruction}, 
Mathematische Zeitschrift {\bf 282} (2016),  799-817.


\bibitem[CTX09]{CTXu09} J.-L. Colliot-Th\'el\`ene and F. Xu: \emph{{B}rauer-{M}anin obstruction for integral points of homogeneous spaces and representation of integral quadratic forms},  Compositio Mathematica {\bf 145} (2009), 309-363.
	
\bibitem[CTX13]{CTXu13} J.-L. Colliot-Th\'el\`ene and F. Xu: \emph{Strong approximation for the total space of certain quadric fibrations}, 
Acta Arithmetica {\bf 157} (2013), 169-199.
	
% \bibitem[Dem1] {Demarche-two-term} C. Demarche: \emph{Suites de Poitou-Tate pour les complexes de tores \`a  deux termes},
% Int. Math. Res. Not. (2011), 135-174. 


\bibitem[Dem]{Demarche11} C. Demarche: \emph{Le d\'efaut d'approximation forte dans les groupes lin\'eaires connexes},
	Proc. London Math. Soc. {\bf 102} (2011), 563-597.


\bibitem[DW]{DW} U. Derenthal and D. Wei: \emph{Strong approximation and descent},  J. reine und angew. Math. 731  (2017) 235-258. 



\bibitem[EGA IV]{EGAIV} A. Grothendieck:  \emph{\'Etude locale des sch\'emas et des morphismes de sch\'emas}, I.H.E.S. Publ. Math., 1964. 


\bibitem[Eic38]{Eic38} M. Eichler: \emph{Allgemeine Kongruenzklassenteilungen der Ideal einfacher Algebren \"uber algebraischen Zahlk\"orpern und ihre L-Reihen}, 
 J. reine und angew. Math. {\bf 179} (1938), 227-251. 

\bibitem[Eic52]{Eic52} M. Eichler: \emph{Quadratische Formen und orthogonale Gruppen}, Springer, 1952.

\bibitem[Fal86]{Fal86} G. Faltings: \emph{Some historical notes}, in "Arithmetic Geometry" edited by Cornell and Silverman, Springer (1986), 1-8. 

\bibitem[GS06]{GS06} P. Gille and T. Szamuely: \emph{Central simple algebras and Galois cohomology}, Cambridge Studies in Advanced Mathematics {\bf 101}, Cambridge University Press, 2006.


\bibitem[GZ]{GrossZagier} B. Gross and D. Zagier: \emph{Heegner points and derivatives of {L}-series}, Invent. Math. {\bf 84} (1986),  225--320. 


\bibitem[Har08]{Harari08} D. Harari: \emph{Le d{\'e}faut d'approximation forte pour les groupes alg{\'e}briques commutatifs}, 
Algebra Number Theory {\bf 2} (2008), 595-611.

\bibitem[HV10]{HV10} D. Harari and J. F. Voloch: \emph{The {B}rauer--{M}anin obstruction for integral points on curves}, Math. Proc. Cambridge Philos. Soc. {\bf 149} (2010), 
413--421. 



\bibitem[HW]{HW} Y. Harpaz and O. Wittenberg: \emph{On the fibration method for zero-cycles and rational points}, Ann. of Math. {\bf 183} (2016),  229-295.


\bibitem[HT]{HT}  B. Hassett and Y. Tschinkel:  \emph{Density of integral points on algebraic varieties},  Progr. Math. {\bf 199}, Birkhh\"auser Boston (2001), 165-197.



\bibitem[Kne65]{Kne65} M. Kneser: \emph{Starke Approximation in algebraischen Gruppen I}, J. reine und angew. Math. {\bf 218} (1965), 190-203. 

\bibitem[Kol]{Kolyvagin} V. A. Kolyvagin: \emph{Euler systems}, Progr. Math. {\bf 87}, The Grothendieck Festschrift, Birkh\"auser Boston (1990), 435--483. 

\bibitem[LX15]{LiuXu15} Q. Liu and F. Xu: \emph{Very strong approximation for certain algebraic varieties}, Math. Ann. {\bf 363} (2015), 701-731. 

\bibitem[Mat55]{Mattuck} A. Mattuck: \emph{Abelian varieties over $p$-adic groud fields}, Ann. of Math. {\bf 62} (1955), 92-119.
	
\bibitem[Mil72]{Milne72} J. S. Milne: \emph{On the arithmetic of abelian varieties}, Invent. Math. {\bf 17} (1972), 177-190. 

\bibitem[Mil80]{Milne80} J. S. Milne: \emph{\'Etale cohomology}, Princeton Mathematical Series {\bf 33}, Princeton University Press, 1980. 

\bibitem[MAD]{MilneADT} J. S. Milne: \emph{Arithmetic Duality Theorems}, second edition, BookSurge, LLC, 2006. 
	

\bibitem[Min89]{Minchev} Kh. P. Min\v{c}hev: \emph{Strong approximation for varieties over an algebraic number field}, Dokl. Akad. Nauk BSSR {\bf 33} (1989), 5-8. 


\bibitem[Plat1]{Plat1}  V. P. Platonov: 
\emph{Strong approximation in algebraic groups and the Kneser-Tits conjecture},
Dokl. Akad. Nauk BSSR {\bf 13} (1969), 585-587.

\bibitem[Plat2]{Plat2} V. P. Platonov: 
\emph{The strong approximation problem and the Kneser-Tits conjecture for algebraic groups},
Izv. Akad. Nauk SSSR, Ser.Mat. {\bf 33} (1969), 1211-1219.


\bibitem[PR]{PR} V. Platonov and A. Rapinchuk: \emph{Algebraic Groups and Number Theory},  Academic Press, Inc., 1994. 


\bibitem[Po10]{Poonen} B. Poonen: \emph{Insufficiency of the {B}rauer-{M}anin obstruction applied to \'etale covers}, Ann. of Math. {\bf 171} (2010),  2157-2169. 



\bibitem[Pra77]{Pra77} G. Prasad: 
\emph{Strong approximation for semi-simple groups over function fields},
Ann. of Math. {\bf 105} (1977), 553-572. 


\bibitem[Rap14]{Rapinchuk} A. S. Rapinchuk: \emph{Strong approximation for algebraic groups},  Math. Sci. Res. Inst. Publ. {\bf 61} (2014),  Thin groups and superstrong approximation, Cambridge Univ. Press,  269-298. 

\bibitem[San81]{Sansuc} J.-J. Sansuc: \emph{Groupe de {B}rauer et arithm\'etique des groupes alg\'ebriques lin\'eaires sur un corps de nombres}, 
J. reine angew. Math. {\bf 327} (1981), 12-80. 



\bibitem[Shi64]{Shi64} G. Shimura:  \emph{Arithmetic of unitary groups}, Ann. of Math. {\bf 79} (1964), 369-409. 

\bibitem[Sk99]{Sk-beyond} A. N. Skorobogatov: \emph{Beyond the {M}anin obstruction}, Invent. Math. {\bf 135} (1999),  399-424.


\bibitem[Sk01]{Skbook} A. N. Skorobogatov:  \emph{Torsors and rational points}, Cambridge University Press, 2001.



\bibitem[Sp98]{Sp} T. A. Springer: \emph{Linear algebraic groups}, Progress in Mathematics, 2nd ed.; Birkh\"auser, 1998.

\bibitem[TYZ]{TYZ} Y. Tian, X. Y. Yuan and S. W. Zhang: \emph{Genus periods, genus points and congruent number problem},  Asian J. of Math. {\bf 21} (2017), 721-774.

\bibitem[Voj96]{Voj96}  P. Vojta: \emph{Integral points on subvarieties of semiabelian varieties, I},  Invent. Math. {\bf 126} (1996), 133-181. 

\bibitem[Wei14]{Wei14} D. Wei: \emph{Strong approximation for the variety containing a torus},  arXiv:1403.1035 (2014). 

\bibitem[WX]{WX} D. Wei and F. Xu: \emph{Integral points for groups of multiplicative type}, Adv. in Math. {\bf 232} (2013), 36-56. 


\bibitem[Weil] {Weil} A. Weil:  \emph{Algebras with involutions and the classical groups}, J. Indian Math. Soc. {\bf 24} (1961), 589-623. 

\bibitem[Wit16]{Wit16} O. Wittenberg: \emph{Rational points and zero-cycles on rationally connected varieties over number fields},  arXiv:1604.08543  (2016).


\bibitem[Xu15]{Xu15} F. Xu: \emph{Strong approximation for certain quadric fibrations with compact fibers} Adv. in Math. {\bf 281} (2015), 279-295. 



\end{thebibliography}

\end{document}